\setlist[enumerate]{label=\textup{(\roman{*})}}
\newcommand{\wstack}[3]{%
    \begingroup%
    \renewcommand*{\arraystretch}{0.8}%
    $\begin{matrix}\ltr{#1}\\\ltr{#2}\\\ltr{#3}\end{matrix}$%
    \endgroup%
}
\tikzset{font=\small}
\tikzset{cutting/.style={draw,fill=white,circle,minimum size=3pt,inner sep=0pt}}
\tikzset{word/.style={minimum size=10pt,execute at end node={\strut}}}
\tikzset{morphism/.style={-to,shorten >=3pt,shorten <=3pt}}
\tikzset{state/.style={draw,circle,inner sep=2pt}}
\tikzset{edgelabel/.style={font=\footnotesize}}
\newcommand*{\nn}{\mathbb{N}}
\newcommand*{\ret}{\mathcal{R}}
\newcommand*{\emptyw}{\varepsilon}
\newcommand*{\from}{\colon}
\newcommand{\ltr}[1]{\mathtt{#1}}
\newcommand*{\lang}{\mathcal{L}}
\newcommand*{\lesp}{L}
\newcommand*{\risp}{R}
\newcommand*{\bs}[1]{\mathbf{#1}}
\renewcommand*{\bar}{\overline}
\newcommand*{\epi}{\operatorname{Epi}}
\newcommand*{\std}{\operatorname{Std}}
\newcommand*{\en}{\operatorname{End}}
\newcommand*{\perm}{\operatorname{Perm}}
\newcommand*{\pal}{\operatorname{Pal}}
\newcommand*{\gcp}{\operatorname{gcp}}
\newcommand*{\gcs}{\operatorname{gcs}}
\newcommand*{\ind}{\operatorname{ind}}
\newcommand*{\dee}{\operatorname{d}}
\theoremstyle{plain}
\newtheorem{theorem}{Theorem}[section]
\newtheorem{proposition}[theorem]{Proposition}
\newtheorem{lemma}[theorem]{Lemma}
\newtheorem{corollary}[theorem]{Corollary}
\theoremstyle{definition}
\newtheorem{definition}[theorem]{Definition}
\newtheorem{notation}[theorem]{Notation}
\theoremstyle{remark} 
\newtheorem{remark}[theorem]{Remark}
\newtheorem{example}[theorem]{Example}
\begin{document}

\title{Obstructions to return preservation for episturmian morphisms}

\author[V. Berthé]{Valérie Berthé}

\email{berthe@irif.fr}

\address{IRIF, Université Paris Cité, 75013 Paris, France}

\author[H. Goulet-Ouellet]{Herman Goulet-Ouellet}

\email{herman.goulet.ouellet@fit.cvut.cz}

\address{Czech Technical University in Prague, Faculty of Information Technology}

\begin{abstract}
    This paper studies obstructions to preservation of return sets by episturmian morphisms. We show, by way of an explicit construction, that infinitely many obstructions exist. This generalizes and improves an earlier result about Sturmian morphisms.
\end{abstract}

\keywords{Returns words; Episturmian words; Episturmian morphisms; Arnoux--Rauzy words.}
\subjclass[2020]{Primary 68R15; Secondary 37B10.}

\thanks{This work was supported by the Agence Nationale de la Recherche through the project SymDynAr (ANR-23-CE40-0024-01). The second author was supported by the CTU Global Postdoc Fellowship program.}

\maketitle

\section{Introduction}

Return words have widely  proved  their importance in word combinatorics and symbolic dynamics. Given a uniformly recurrent infinite word $x$ over a finite alphabet  (uniform recurrence means that factors occur with bounded gaps within $x$), a \emph{return word} is a word which separates successive occurrences of factors in $x$. They are particularly useful when considering infinite words generated by iterating morphisms of the free monoid. As an example, let us quote the characterization of morphic words by Durand~\cite{Durand1998} in terms of so-called \emph{derived sequences} -- sequences obtained by recoding an infinite word with respect to some  set of return words. Durand showed that having a finite number of derived sequences precisely characterizes primitive substitutive sequences.

With this general motivation in mind, we consider  the question of whether or not a given primitive morphism preserves its  return sets, i.e. its sets of return words (with return words  being considered with respect to the infinite word generated by the morphism). Note that the primitivity assumption guarantees that the corresponding infinite words are uniformly recurrent, and thus all return sets are finite. The definition of return sets and the precise formulation of the preservation property are given in \cref{s:preliminaries}.

This paper is a continuation of \cite{Berthe2023}, where the preservation property is introduced and the following results are established. 
\begin{enumerate}[label={\arabic{*}.},wide]
    \item The preservation property holds for every primitive aperiodic bifix morphism, wherein images of letters form a bifix code, for all but finitely many words (\cite{Berthe2023}, Theorem~1).  
    \item This result is applied to the case of the Thue--Morse substitution (which is bifix) in order to describe the subgroups generated by the return sets (\cite{Berthe2023}, Proposition~6). These subgroups form several strictly decreasing chains of subgroups, which contrasts with the dendric case discussed below.  
    \item On the other hand, the preservation property fails for primitive Sturmian morphisms up to conjugacy. More precisely, every conjugacy class of a  primitive Sturmian morphism contains a member for which the property fails infinitely often (\cite{Berthe2023}, Theorem~2).
\end{enumerate}

We focus in the present paper on  the  study of the return preservation property for \emph{episturmian morphisms}, a generalization of Sturmian morphisms. The notion of episturmian word was introduced by Droubay, Justin and Pirillo as a generalization of Sturmian and Arnoux--Rauzy words~\cite{Droubay2001}.  An \emph{episturmian word} is an infinite word  whose language  is closed under  mirror  image  and which admits at most one  factor of every length  having at least two extensions; see Section ~\ref{s:epi} for a precise definition, and  the survey~\cite{Glen2009} for more on episturmian words.

Inspired by analogies with the already existing rich theory of Sturmian morphisms (see e.g. \cite{book/Lothaire2002,book/Fogg2002}), episturmian morphisms have attracted considerable attention~\cite{Glen2009a,Justin2005,Richomme2003,Richomme2003a,Seebold1998}. In particular, they belong to the so-called family of \emph{dendric words}, which have the striking property that their return sets form bases for the free group over the alphabet~\cite{Berthe2015}. This result is  known as the \emph{Return Theorem}  and holds even for dendric words that are not generated by morphisms~\cite{Berthe2015}.  Dendric words are a common generalization of episturmian words and codings of interval exchanges. This kind of stable behavior for subgroups generated by return sets was also observed in the more general class of suffix-connected shift spaces, introduced by the second author~\cite{GouletOuellet2022}. Moreover, the behavior of return sets is closely related to a profinite invariant of shift spaces called the \emph{Sch\"utzenberger group}, as highlighted by Almeida and Costa~\cite{Almeida2016}.

These algebraic considerations provide a further motivation to the present work. Indeed, when it holds, the preservation property provides a simple way to construct infinite chains of return groups (i.e. subgroups of the free group generated by return sets) which can be used to understand the evolution of these groups as the length of the factors goes to infinity. In some cases (for instance for the Thue-Morse morphism), this leads to a very detailed understanding of the general behavior of these subgroups. 

It is with the general goal of understanding the precise limitations of methods based on the preservation property that we set out to identify large families for which the property fails or succeeds, with the present paper being focused on failure. Understanding the failure of the preservation property also allows one to take a closer look at the behaviour of return words, and in particular at \emph{shattering} phenomena (see the discussion at the end of \cref{sec:conclu}). 

We now give some details about our main result. In essence, it states that every primitive episturmian morphism $\sigma$ fails the preservation property for infinitely many words. In its more precise form stated below (\cref{t:main}), we explicitly give a sequence of words which fail the property. We recover, in the case of Sturmian morphisms, the second main result from \cite{Berthe2023}, and in fact improve it by showing how to handle the full conjugacy class. 

Episturmian morphisms are sorted into \emph{conjugacy classes} which have a strong structure, such as highlighted in~\cite{Richomme2003}. This explains the important role played by the integer $\ind(\sigma)$, called the \emph{conjugacy index} of $\sigma$, for the description of words that fail the preservation property; its is  introduced in \cref{s:conjugacy}. This index measures the distance to the unique so-called standard episturmian morphism in its class. In other words, the conjugacy index allows one to locate in a given conjugacy class the episturmian morphism under study. The standard episturmian morphisms also play a special role; they generate words such that all their prefixes are \emph{left special} (i.e. have several left extensions).

Next is the precise statement of our main result. Given a primitive episturmian morphism $\sigma$, we let $\lesp_n$, $\risp_n$ be its unique \emph{left and right special factors} of length $n$; these are the factors that have several extensions in the language of $\sigma$. We also denote by $\Vert\sigma\Vert$ is the sum of the lengths of letter images under $\sigma$.

\begin{theorem}\label{t:main}
    Let $\sigma$ be a primitive episturmian morphism over the alphabet $A$. Then there is a letter $a_{\min} $ in $A$  such that $\sigma$ fails the return preservation property for all but finitely many words in:
    \begin{enumerate}
        \item $\{ a_{\min}\lesp_n \mid \lesp_n=\risp_n \}$ if $\ind(\sigma)\geq|\sigma(a_{\min})|$;
            \label{i:main-1}
        \item $\{\risp_na_{\min} \mid \lesp_n=\risp_n\}$ if $\ind(\sigma)\leq\frac{\Vert\sigma\Vert-|A|}{|A|-1} - |\sigma(a_{\min})|$.
            \label{i:main-2}
    \end{enumerate}
\end{theorem}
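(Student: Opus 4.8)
The overall strategy is to reduce the statement to the construction, for each sufficiently large $n$, of a single \emph{shattered} return word, and to govern the shattering by the conjugacy index. The key reformulation is the following. Since episturmian words are dendric, the Return Theorem tells us that for every factor $w$ of the Arnoux--Rauzy word $x$ directed by $\sigma$ the return set $\ret_x(w)$ is a basis of the free group on $A$, so $|\ret_x(w)| = |A| = |\ret_x(\sigma(w))|$. As $\sigma$ is injective and maps the language of $x$ into itself, the return preservation property (\cref{s:preliminaries}) fails at $w$ if and only if some $u \in \ret_x(w)$ is such that $\sigma(u)$ is \emph{not} a return word to $\sigma(w)$, equivalently $\sigma(w)$ has an occurrence in $\sigma(u)\sigma(w)$ strictly between its prefix occurrence and its suffix occurrence. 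Case \ref{i:main-1} will produce such a parasitic occurrence for $w = a_{\min}\lesp_n$, and case \ref{i:main-2} the mirror phenomenon for $w = \risp_n a_{\min}$, in each case for all but finitely many of the (infinitely many) indices $n$ with $\lesp_n = \risp_n$, i.e. for which the special factor is a palindromic bispecial prefix.

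Next I would set up the computation using \cref{s:conjugacy}. Let $\sigma_{\std}$ be the standard episturmian morphism conjugate to $\sigma$ and let $z$ be the conjugating word, so that $|z| = \ind(\sigma)$ and $\sigma(v)\,z = z\,\sigma_{\std}(v)$ for every word $v$; symmetrically, $\sigma$ is conjugate to the antistandard representative of its class by a word of length $\tfrac{\Vert\sigma\Vert - |A|}{|A|-1} - \ind(\sigma)$, which is the maximal conjugacy index in the class. The word $x$, and hence every set $\ret_x(-)$, depends only on the conjugacy class; only the individual images $\sigma(w)$ feel the representative, and from the conjugacy relation one gets that $\sigma(a)$ begins with $z$ whenever $|\sigma(a)| \ge \ind(\sigma)$, while $\sigma(a)$ is a prefix of $z$ whenever $|\sigma(a)| \le \ind(\sigma)$. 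Justin's formula relates the $\sigma_{\std}$-image --- hence, after this bounded cyclic correction by $z$, the $\sigma$-image --- of a palindromic prefix of $x$ to a palindromic prefix of $x$ up to a fixed suffix. Combined with the classical Justin--Vuillon description of return words to special factors and to their one-letter extensions, this makes both $\ret_x(w)$ and $\ret_x(\sigma(w))$ explicit when $\lesp_n = \risp_n$, and in particular isolates a distinguished return word $u$.

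The split into left and right extensions mirrors the fact that the basic episturmian morphisms $L_a$ prepend letters while the $R_a$ append them, so that in a standard decomposition the conjugating word $z$ sits on the left. In case \ref{i:main-1} the hypothesis $\ind(\sigma) = |z| \ge |\sigma(a_{\min})|$ makes $\sigma(a_{\min})$ a prefix of $z$; combined with the bispeciality of $\lesp_n$ (so that $c\lesp_n$ is a factor of $x$ for some letter $c \ne a_{\min}$), this forces a copy of $\sigma(a_{\min})\sigma(\lesp_n) = \sigma(w)$ to straddle the splice point inside $\sigma(u)\sigma(w)$ for the distinguished return word $u$, so that $\sigma(u)$ is shattered. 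Case \ref{i:main-2} is the symmetric statement on the right, with $z$ replaced by the antistandard conjugating word of length $\tfrac{\Vert\sigma\Vert-|A|}{|A|-1} - \ind(\sigma)$, whence the second hypothesis. A short auxiliary lemma --- taking $a_{\min}$ to be a letter of minimal image length and checking that $|\sigma(a_{\min})| \le \min_{0 \le j \le M}\max(j,\, M - j) = \bigl\lceil \tfrac{\Vert\sigma\Vert - |A|}{2(|A|-1)}\bigr\rceil$ for $M = \tfrac{\Vert\sigma\Vert-|A|}{|A|-1}$ --- guarantees that at least one of the two hypotheses holds for every primitive episturmian morphism, which gives the assertion from the introduction that every such morphism fails the preservation property infinitely often, and recovers, for $|A| = 2$, the Sturmian result of \cite{Berthe2023}, now uniformly over the whole conjugacy class.

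Finally I would argue stability in $n$: the combinatorial data controlling the parasitic occurrence --- bounded prefixes and suffixes of the images $\sigma(a)$, the word $z$, and the fixed correction words produced by Justin's formula --- is independent of $n$, whereas the special factor $\lesp_n$ grows, so once the parasitic occurrence appears at some index $n_0$ it persists for all $n \ge n_0$; thus $\sigma(u)$ is shattered and the property fails for all but finitely many $n$. I expect the principal obstacle to be the third step: pinning down precisely which return word of the one-letter extension is shattered, and verifying that the extra occurrence of $\sigma(w)$ materialises at \emph{exactly} the thresholds $\ind(\sigma) \ge |\sigma(a_{\min})|$ and $\ind(\sigma) \le \tfrac{\Vert\sigma\Vert-|A|}{|A|-1} - |\sigma(a_{\min})|$, rather than merely up to a bounded additive slack (which would still yield infinitely many failures, but not the sharp form stated). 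This rests on a careful, $n$-uniform analysis of prefixes and suffixes of compositions of the basic episturmian morphisms, together with the palindromic symmetry $\lesp_n = \risp_n$ that makes the two cases genuine mirror images of one another.
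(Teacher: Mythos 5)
Your opening reduction is sound: since $X_\sigma$ is a strict episturmian (hence dendric) shift, both $\ret_\sigma(w)$ and $\ret_\sigma(\sigma(w))$ have cardinality $|A|$, so \eqref{P} fails at $w$ exactly when some $\sigma(u)$, $u\in\ret_\sigma(w)$, shatters, i.e.\ when $\sigma(w)$ has an internal occurrence in $\sigma(uw)$. This matches the discussion in \cref{sec:conclu}. The genuine gap is that the proposal never proves that such an internal occurrence exists under the stated threshold hypotheses: the two sentences claiming that the Justin--Vuillon description ``makes both $\ret_x(w)$ and $\ret_x(\sigma(w))$ explicit'' and that the hypothesis ``forces a copy of $\sigma(w)$ to straddle the splice point'' are precisely the content that has to be established, and you yourself flag this as the principal obstacle. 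Computing $\ret_\sigma(\sigma(a_{\min}\lesp_n))$ is the hard part: one must locate $\sigma(a_{\min}\lesp_n)$ inside its Rauzy graph (in the paper, \cref{l:proof-1,l:proof-2} show it is the $(\ind(\sigma)-j)$-th inner word of $\Gamma_k$ when $\ind(\sigma)\geq j=|\sigma(a_{\min})|$, and only for $n$ large enough that the return words of $\lesp_k$ are longer than $m-j$), then convert this position into an explicit return set (\cref{t:return,l:dl}), and finally compare with $\sigma(\ret_\sigma(a_{\min}\lesp_n))$, which the paper does not by exhibiting an occurrence but by comparing the conjugacy-index invariant $|\gcs(rs,sr)|$ of the two sets (\cref{l:proof-3} together with \cref{c:index}): the first set has invariant $\ind(\sigma)-j$, the second $\geq\ind(\sigma)$, so they differ. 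Nothing playing the role of this quantitative comparison, or of an explicit parasitic occurrence, appears in your sketch. Your ``stability in $n$'' step is likewise unsupported: the words $a_{\min}\lesp_n$, their return words, and the target $\sigma(a_{\min}\lesp_n)$ all change with $n$, so an internal occurrence at $n_0$ does not transfer to $n>n_0$; in the paper the ``all but finitely many'' quantifier comes instead from the largeness condition on $n$ in \cref{l:proof-2}.

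Two further points, smaller but symptomatic. First, your conjugation identity is written backwards: with the paper's conventions the standard representative satisfies $\sigma_{\std}(v)w=w\sigma(v)$ with $|w|=\ind(\sigma)$, so when $\ind(\sigma)\geq|\sigma(a_{\min})|$ the word $\sigma(a_{\min})$ is a common \emph{suffix} of the images $\sigma(a)$ and a suffix of the conjugating word (\cref{p:minletter-epi}\ref{i:minletter-epi-1} and its proof), not a prefix of $z$; this orientation is exactly what links case \ref{i:main-1} to the left-special words $a_{\min}\lesp_n$ (via \cref{l:proof-1}\ref{i:left-special}), and the flip would propagate through your case analysis. Second, the auxiliary bound guaranteeing that every primitive episturmian morphism falls into at least one of the two cases is stated as a quick check, but it is the nontrivial inequality $2|\sigma(a_{\min})|\leq\frac{\Vert\sigma\Vert-1}{|A|-1}$, proved in the paper by induction using Justin's formula (\cref{p:minletter-epi}\ref{i:minletter-epi-3}). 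As it stands, the proposal is a reasonable plan whose skeleton resembles the paper's (conjugacy index, minimal letter, Justin--Vuillon return words), but the central lemmas that make the thresholds $\ind(\sigma)\geq|\sigma(a_{\min})|$ and $\ind(\sigma)\leq\frac{\Vert\sigma\Vert-|A|}{|A|-1}-|\sigma(a_{\min})|$ do actual work are missing, so it does not yet constitute a proof.
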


For context, the quantity $\frac{\Vert\sigma\Vert-1}{|A|-1}=1+\frac{\Vert\sigma\Vert - |A|}{|A|-1}$ is the cardinality of the conjugacy class of $\sigma$~\cite{Richomme2003}. The proof of the main result relies on a careful study of return sets and Rauzy graphs (see \cref{s:Rauzy}); see in particular~\cref{t:return} where an explicit description of return words is given. The letter $a_{\min}$ appearing in the theorem is called the \emph{minimal letter} and is introduced in \cref{p:minletter-epi}. As highlighted in \cref{c:min}, the inequality $|\sigma(a_{\min})| \leq \frac{\Vert\sigma\Vert - |A|}{|A|-1}-|\sigma(a_{\min})|$ holds, thus implying that every episturmian substitution falls into one of the two cases of \cref{t:main}.

Let us sketch the content of this paper. Basic definitions, and in particular the preservation property, are recalled in \cref{s:preliminaries}. \cref{s:epi} is devoted to episturmian and standard morphisms. \cref{s:conjugacy} focuses on properties of their conjugacy classes, and Rauzy graphs are considered in \cref{s:Rauzy}. Lastly, the proof of \cref{t:main} is given in \cref{s:proofmain} and a  brief conclusion is provided in \cref{sec:conclu}.

\subsection*{Acknowledgements}
 The authors  warmly thank the referees who helped us to  improve the previous version of this paper.

\section{Preliminaries}
\label{s:preliminaries}

Let $A$ be a finite alphabet and $A^*$ be the set of finite words on $A$ equipped with concatenation. We denote the empty word by $\emptyw$ and we let $A^+ = A^*\setminus\{\emptyw\}$.

Let $A^\nn$ be the set of (right-sided) infinite word over $A$. Recall that a (one-sided) \emph{shift space} is a subset $X\subseteq A^\nn$ invariant under the shift map $(x_n)_{n\in\nn}\mapsto (x_{n+1})_{n\in\nn}$ and closed for the product topology of $A^\nn$ ($A$ being discrete). The \emph{language of $X$}, denoted $\lang(X)$, is the set of all finite words occurring as factors in the elements $x\in X$, i.e.\ words of the form 
\begin{equation*}
    x_{[i,j)} = x_i\cdots x_{j-1}\quad \text{for}\ i\leq j\in\nn. 
\end{equation*}

A \emph{left extension} of $v$ in $X$ is a word $u\in A^*$ such that $uv\in \lang(X)$. Likewise, a \emph{right extension} of $v$ is a word $w\in A^*$ such that $vw\in\lang(X)$. We say that $v$ is \emph{left special} if it has at least two left extensions of length 1, and \emph{right special} if those are instead right extensions. A word is \emph{bispecial} if it is simultaneously left and right special. 

Given $u\in \lang(X)$, the following sets are known respectively as the \emph{left} and \emph{right return sets} of $u$ in $X$:
\begin{gather*}
    \ret_X(u) = \{ r\in A^* \mid ru\in \lang(X)\cap uA^+\setminus A^+uA^+\},\\
    \bar\ret_X(u) = \{ r\in A^* \mid ur\in \lang(X)\cap A^+u\setminus A^+uA^+\}.
\end{gather*}
Their elements are called \emph{left} and \emph{right return words} respectively. Observe that these two return sets are related by the following formula (\cite{Almeida2013}, p.~16):
\begin{equation}
    \ret_X(u) = u\bar\ret_X(u)u^{-1}.
    \label{eq:return}
\end{equation}

Let $\sigma\from A^*\to A^*$ be a  morphism. We say that an infinite word $\bs x\in A^\nn$ is a \emph{periodic point} for $\sigma$ if there is an integer $k\geq 1$ such that $\sigma^k(\bs x) = \bs x$. The least such $k$ is known as the \emph{period} of $\bs x$ under $\sigma$. When the period is 1 we say that $\bs x$ is a \emph{fixed point}. Any primitive morphism admits at least one periodic point.

Assume that $\sigma$ is primitive and let $\bs x$ be a periodic point of $\sigma$. We denote by $X_\sigma$ the shift space of $A^\nn$ associated with $\sigma$, which is defined as the closure of the orbit of $\bs x$ under the action of the shift map. This set does not depend on the choice of a periodic point when $\sigma$ is primitive; see for instance~\cite{book/Queffelec2010}, \S~5 or~\cite{book/Durand2022}, \S~1.4. We also let $\lang(\sigma)$ be the language of $X_\sigma$ and $\ret_\sigma(u)$, $\bar\ret_\sigma(u)$ be the left and right return sets of a word $u\in\lang(\sigma)$. Under the primitivity assumption, every periodic point of $\sigma$ is uniformly recurrent, hence all the return sets $\ret_\sigma(u)$ are finite and $X_\sigma$ is minimal; see e.g.~\cite{Durand1998}.

Let $\sigma$ be a primitive morphism over $A$. Given a word $u\in\lang(\sigma)$, consider the following dual preservation properties:
\begin{gather}
    \sigma(\ret_\sigma(u)) = \ret_\sigma(\sigma(u)), \tag{P}\label{P} \\
    \sigma(\bar\ret_\sigma(u)) = \bar\ret_\sigma(\sigma(u)).\tag{P\textquotesingle}\label{P'}
\end{gather}

Because of \eqref{eq:return}, these two properties turn out to be equivalent (\cite{Berthe2023}, Lemma~3). The property \eqref{P} is  the main actor of the present paper. 

Let $\sigma$ and $\rho$ be two morphisms. We say that $\rho$ is a \emph{right conjugate} of $\sigma$, or $\sigma$ a \emph{left conjugate} of $\rho$, if there is a word $w\in A^*$ such that $\sigma(a)w = w\rho(a)$ for all $a\in A$; this is written $\sigma = w\rho w^{-1}$ or $\rho = w^{-1}\sigma w$. We say that two morphisms are conjugate if the first is a right conjugate of the second or vice-versa. This is an equivalence relation~(\cite{Richomme2003}, Lemma~3.1), so we may speak of \emph{conjugacy classes}.

\section{First properties of episturmian morphisms}
\label{s:epi}

In this section, we  gather some basic properties of episturmian morphisms. For a more detailed account on the theory of episturmian words, see the survey~\cite{Glen2009}. 
Let us first recall some key definitions. 

\begin{enumerate}[label={\arabic{*}.},wide]
    \item An \emph{episturmian} word is an infinite word $\bs x\in A^\nn$ whose language $\lang(\bs x)$ is closed under reversal (mirror) and which admits at most one left special factor of every length (this is \cite{Droubay2001}, Theorem~5, used here as a definition). 
    \item An episturmian word is called \emph{non-degenerate} if it admits one left special factor of every length.
    \item A \emph{standard episturmian} word is a non-degenerate episturmian word $\bs x$ whose left special factors occur as prefixes, that is they are all of the form $\bs x_{[0,n)}$, $n\in\nn$. 
    \item A \emph{strict episturmian} word is a non-degenerate episturmian word whose left special factors all have $|A|$ left extensions.
\end{enumerate}

Strict episturmian words are sometimes called \emph{Arnoux--Rauzy} words. Because of closure under mirror, left special factors and left extensions can equivalently be replaced by their right counterparts in the above definitions. Episturmian words are \emph{uniformly recurrent} (\cite{Droubay2001}, Theorem~2), hence they generate minimal shift spaces. A minimal shift space containing \emph{one} episturmian word consists therefore \emph{only} of episturmian words; we call those \emph{episturmian shift spaces}. Likewise it makes sense to speak of \emph{strict episturmian shift spaces}.

A morphism $\sigma\from A^*\to A^*$ is called \emph{episturmian} if it preserves episturmian words. A \emph{standard morphism} is an episturmian morphism which additionally preserves standard episturmian words. Let $\epi(A)$ be the set of all episturmian morphisms of $A^*$ and $\std(A)$ be the set of all standard morphisms of $A^*$. Note that $\std(A)$ is a submonoid of $\epi(A)$ which is itself a submonoid of $\en(A^*)$, the monoid of all endomorphisms of $A^*$. 

For $a\in A$, consider the morphisms $\psi_a,\bar\psi_a\from A^*\to A^*$ defined by
\begin{equation*}
    \psi_a(b) = 
    \begin{cases}
        a & \text{if }b=a,\\
        ab & \text{if }b\neq a;
    \end{cases}\qquad 
    \bar\psi_a(b) = 
    \begin{cases}
        a & \text{if }b=a,\\
        ba & \text{if }b\neq a.
    \end{cases}
\end{equation*}

In other words, on an alphabet $A = \{\ltr{a},\ltr{b},\ltr{c},\cdots\}$, one has 
\begin{equation*}
    \psi_{\ltr{a}}(\ltr{a}) = \ltr{a},\quad \psi_{\ltr{a}}(\ltr{b})=\ltr{ab},\quad \psi_{\ltr{a}}(\ltr{c}) = \ltr{ac},\quad\cdots.
\end{equation*}

Let $\perm(A)$ be the group of permutations of $A$, which we also view as automorphisms of $A^*$.
\begin{example}[$d$-bonacci]
    \label{e:dbo}
    Let $d\geq 2$ and $A = \{\ltr{a}_1,\cdots,\ltr{a}_d\}$ be a $d$-letter alphabet. Let $\sigma = \psi_{\ltr{a}_1}\circ\pi$, where $\pi = (\ltr{a}_1 \ \ltr{a}_2 \ \cdots \ \ltr{a}_d)$, with the usual cycle notation for permutations. This is a standard episturmian morphism known as the \emph{$d$-bonacci morphism}. When $d=2$ we get the Fibonacci morphism
    \begin{equation*}
        \sigma(\ltr{a}) = \ltr{a}\ltr{b},\quad \sigma(\ltr{a}) = \ltr{a},
    \end{equation*}
    and with $d=3$ the \enquote{tribonacci} morphism, first studied by Rauzy in~\cite{Rauzy1982},
    \begin{equation*}
        \sigma(\ltr{a}) = \ltr{ab},\quad \sigma(\ltr{b}) = \ltr{ac},\quad \sigma(\ltr{c}) = \ltr{a}.
    \end{equation*}
    Observe that $\sigma^d = \psi_{\ltr{a}_1}\circ\psi_{\ltr{a}_2}\circ\cdots\circ\psi_{\ltr{a}_d}$.
\end{example}

Let us mention the following important result of Justin and Pirillo.
\begin{theorem}[\cite{Justin2002}]
    The monoid $\epi(A)$ is generated by the endomorphisms $\psi_a$, $\bar\psi_a$ for $a\in A$ together with the elements of $\perm(A)$. 

    The monoid $\std(A)$ is generated by the endomorphisms $\psi_a$ for $a\in A$ together with the elements of $\perm(A)$.
\end{theorem}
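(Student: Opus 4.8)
The plan is to prove both statements by establishing two inclusions in each. Write $E$ for the submonoid of $\en(A^*)$ generated by $\{\psi_a,\bar\psi_a : a\in A\}\cup\perm(A)$ and $E'$ for the one generated by $\{\psi_a : a\in A\}\cup\perm(A)$; the goal is to show $E=\epi(A)$ and $E'=\std(A)$. The inclusions $E\subseteq\epi(A)$ and $E'\subseteq\std(A)$ amount, since $\epi(A)$ and $\std(A)$ are monoids, to checking that each generator is episturmian and that each $\psi_a$ and each permutation is standard. Permutations are immediate. For $\psi_a$ and $\bar\psi_a$ one argues from the definition: given an episturmian word $\bs x$, there is an explicit dictionary between factors of $\psi_a(\bs x)$ and factors of $\bs x$ (obtained by inserting the letter $a$ before each non-$a$ letter), and from it one reads that $\lang(\psi_a(\bs x))$ is closed under reversal and that its left special factors are, after deleting a prefix, exactly the words $a\,\psi_a(w)$ with $w$ left special in $\bs x$, so there is at most one of each length; when $\bs x$ is standard these factors are prefixes of $\psi_a(\bs x)$, which is therefore standard. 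The case of $\bar\psi_a$ is symmetric, working with right special factors.

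The reverse inclusions are proved by induction on $\Vert\sigma\Vert=\sum_{a\in A}|\sigma(a)|$. If $\Vert\sigma\Vert=|A|$, every $\sigma(a)$ has length one, so $\sigma$ extends a map $f\from A\to A$; since episturmian morphisms are injective, $f$ is a permutation and $\sigma\in\perm(A)$. Assume now $\Vert\sigma\Vert>|A|$. The crux is the following \emph{reduction lemma}: there is a letter $a$ with $\sigma(A)\subseteq\psi_a(A^*)$ or $\sigma(A)\subseteq\bar\psi_a(A^*)$; letting $g$ be the corresponding generator and $\tau$ the morphism obtained by desubstituting each $\sigma(b)$ along $g$ (so that $g\circ\tau=\sigma$), one has $\Vert\tau\Vert<\Vert\sigma\Vert$ and $\tau$ is again episturmian. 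Granting the lemma, the induction hypothesis gives $\tau\in E$, hence $\sigma=g\circ\tau\in E$; and when $\sigma$ is standard one checks that the first alternative occurs, with $g=\psi_a$ and $\tau$ again standard, so the induction stays inside $E'$.

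To see that $\tau$ is episturmian once the lemma is known: the image of $g\in\{\psi_a,\bar\psi_a\}$ is freely generated by a finite code (a suffix code when $g=\psi_a$, a prefix code when $g=\bar\psi_a$), so the desubstitution of a word lying in $g(A^*)$ is unambiguous and $\tau$ is well defined; then for any episturmian word $\bs y$, the word $\sigma(\bs y)=g(\tau(\bs y))$ is episturmian because $\sigma$ is, and reversing the factor analysis of the first paragraph shows that $g(\bs z)$ episturmian forces $\bs z$ episturmian, so $\tau(\bs y)$ is episturmian; hence $\tau$ preserves episturmian words. The standardness of $\tau$ in the standard case follows in the same way, reflecting the property through $\psi_a$.

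The main obstacle is the existence of the reducing letter $a$, which is where the hypothesis that $\sigma$ preserves episturmian words is really used. The plan here is to apply $\sigma$ to a strict episturmian (Arnoux--Rauzy) word $\bs s$ over $A$ whose directive sequence uses every letter infinitely often, so that $\bs t=\sigma(\bs s)$ is episturmian, and then to extract $a$ from the combinatorics of $\bs t$: one analyses the unique left special, right special and bispecial factors of $\bs t$ and the way they are cut by the block decomposition $\bs t=\sigma(\bs s_0)\sigma(\bs s_1)\cdots$, and deduces that some letter $a$ is a common prefix of all the $\sigma(b)$ with no $\sigma(b)$ containing two consecutive non-$a$ letters (the $\psi_a$ case), or the mirrored statement (the $\bar\psi_a$ case). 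Degenerate possibilities, such as all $\sigma(b)$ being powers of a single letter, are excluded because episturmian words are aperiodic and $\sigma$ is injective. Apart from this combinatorial core, the proof rests on standard structural facts about episturmian morphisms (injectivity; the reduction to image alphabet $A$; the fact that it suffices to test the episturmian and standard properties on strict episturmian words), which one would recall at the outset, and on routine bookkeeping with lengths and with the block structure.
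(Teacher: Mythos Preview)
The paper does not prove this theorem; it is quoted from the cited reference and used as a black box, so there is no proof in the paper to compare against. Your outline is essentially the strategy of the original source: the easy inclusion by checking that each generator preserves (standard) episturmian words, and the hard inclusion by induction on $\Vert\sigma\Vert$ via a factorisation $\sigma=g\circ\tau$ with $g\in\{\psi_a,\bar\psi_a\}$.

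That said, the proposal is an outline rather than a proof. The reduction lemma is, as you acknowledge, the entire content of the hard direction, and you have stated what it asserts rather than why it holds. Knowing that $\bs t=\sigma(\bs s)$ is episturmian tells you that some letter $a$ is the unique left-special letter of $\bs t$ and hence precedes every other letter; what you have not supplied is the passage from this global statement about $\bs t$ to the block-wise conclusion that each individual $\sigma(b)$ lies in $\psi_a(A^*)$ (or dually in $\bar\psi_a(A^*)$). That step requires controlling the first and last letters of every $\sigma(b)$ by exploiting that in a strict $\bs s$ each letter occurs with sufficiently many distinct neighbours to pin these down; it is not long, but it is the substance of the theorem and cannot be replaced by ``one analyses \ldots and deduces''. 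The standard refinement (``one checks that the first alternative occurs, with $\tau$ again standard'') likewise needs an argument, not just an assertion. Finally, your base case invokes injectivity of episturmian morphisms, but in the literature injectivity is typically a \emph{consequence} of the very decomposition you are proving; you should handle the case $\Vert\sigma\Vert=|A|$ directly, and note that under the paper's bare definition a non-injective letter-to-letter map (e.g.\ a constant map) sends every word to a periodic, hence degenerate episturmian, word---so excluding it requires either an explicit argument or a convention you should spell out.
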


Let $\bar A = \{\bar a \mid a\in A\}$ be a disjoint copy of $A$ and extend $\psi$  as a map on $(A\cup\bar A)$ to a map on $(A\cup\bar A)^*$ defined by $\psi_{\bar a} = \bar\psi_a$ and $\psi_{uv} = \psi_u\circ\psi_v$. Note that for all $u\in(A\cup\bar A)^*$ and $\pi\in\perm(A)$
\begin{equation}\label{eq:commute}
    \psi_{\pi(u)}\circ\pi = \pi\circ\psi_{u}.
\end{equation}
In particular, the monoids $\epi(A)$ and $\std(A)$ admit the following descriptions:
\begin{gather*}
    \epi(A) = \{\psi_v\circ\pi \mid v\in(A\cup\bar A)^*, \pi\in\perm(A)\},\\
    \std(A) = \{\psi_u\circ\pi\mid u\in A^*, \pi\in\perm(A)\}.
\end{gather*}

Thus the tuples of the form $(\psi_u(a))_{a\in A}$ with $u\in A^*$ represent all standard episturmian morphisms up to permutation. These tuples, called \emph{standard tuples}, form the \emph{standard tree}; the standard ternary tree is depicted in \cref{f:std-tree}. 

\begin{figure}
\begin{tikzpicture}[font=\scriptsize,every node/.style={inner sep=2pt}]
        \node (e) at (0,0)   {\wstack{a}{b}{c}};
            \node (a) at (90:2)  {\wstack{a}{ab}{ac}};
                \node (aa) at ([shift=({90:2})]a) {\wstack{a}{aab}{aac}};
                    \node (aaa) at ([shift=({90:2})]aa)  {\wstack{a}{aaab}{aaac}};
                    \node (aab) at ([shift=({120:2})]aa) {\wstack{aaba}{aab}{aabaac}};
                    \node (aac) at ([shift=({60:2})]aa)  {\wstack{aaca}{aacaab}{aac}};
                \node (ab) at ([shift=({150:2})]a) {\wstack{aba}{ab}{abac}};
                    \node (aba) at ([shift=({150:2})]ab) {\wstack{aba}{abaab}{abaabac}};
                    \node (abb) at ([shift=({180:2})]ab) {\wstack{ababa}{ab}{ababac}};
                    \node (abc) at ([shift=({120:2})]ab) {\wstack{abacaba}{abacab}{abac}};
                \node (ac) at ([shift=({30:2})]a) {\wstack{aca}{acab}{ac}};
                    \node (aca) at ([shift=({30:2})]ac) {\wstack{aca}{acaacab}{acaac}};
                    \node (acb) at ([shift=({60:2})]ac) {\wstack{acabaca}{acab}{acabac}};
                    \node (acc) at ([shift=({0: 2})]ac) {\wstack{acaca}{acacab}{ac}};
            \node (b) at (330:2) {\wstack{ba}{b}{bc}};
                \node (ba) at ([shift=({330:2})]b) {\wstack{ba}{bab}{babc}};
                    \node (baa) at ([shift=({330:2})]ba) {\wstack{ba}{babab}{bababc}};
                    \node (bab) at ([shift=({360:2})]ba) {\wstack{babba}{bab}{babbabc}};
                    \node (bac) at ([shift=({300:2})]ba) {\wstack{babcba}{babcbab}{babc}};
                \node (bb) at ([shift=({390:2})]b) {\wstack{bba}{b}{bbc}};
                    \node (bba) at ([shift=({390:2})]bb) {\wstack{bba}{bbab}{bbabbc}};
                    \node (bbb) at ([shift=({420:2})]bb) {\wstack{bbba}{b}{bbbc}};
                    \node (bbc) at ([shift=({360:2})]bb) {\wstack{bbcbba}{bbcb}{bbc}};
                \node (bc) at ([shift=({270:2})]b) {\wstack{bcba}{bcb}{bc}};
                    \node (bca) at ([shift=({270:2})]bc) {\wstack{bcba}{bcbabcb}{bcbabc}};
                    \node (bcb) at ([shift=({300:2})]bc) {\wstack{bcbbcba}{bcb}{bcbbc}};
                    \node (bcc) at ([shift=({240:2})]bc) {\wstack{bcbcba}{bcbcb}{bc}};
            \node (c) at (210:2) {\wstack{ca}{cb}{c}};
                \node (ca) at ([shift=({210:2})]c) {\wstack{ca}{cacb}{cac}};
                    \node (caa) at ([shift=({210:2})]ca) {\wstack{ca}{cacacb}{cacac}};
                    \node (cab) at ([shift=({240:2})]ca) {\wstack{cacbca}{cacb}{cacbcac}};
                    \node (cac) at ([shift=({180:2})]ca) {\wstack{cacca}{caccacb}{cac}};
                \node (cb) at ([shift=({270:2})]c) {\wstack{cbca}{cb}{cbc}};
                    \node (cba) at ([shift=({270:2})]cb) {\wstack{cbca}{cbcacb}{cbcacbc}};
                    \node (cbb) at ([shift=({300:2})]cb) {\wstack{cbcbca}{cb}{cbcbc}};
                    \node (cbc) at ([shift=({240:2})]cb) {\wstack{cbccbca}{cbccb}{cbc}};
                \node (cc) at ([shift=({150:2})]c) {\wstack{cca}{ccb}{c}};
                    \node (cca) at ([shift=({150:2})]cc) {\wstack{cca}{ccaccb}{ccac}};
                    \node (ccb) at ([shift=({180:2})]cc) {\wstack{ccbcca}{ccb}{ccbc}};
                    \node (ccc) at ([shift=({120:2})]cc) {\wstack{ccca}{cccb}{c}};
        \draw[-{to}] (e) to node  [draw,fill=white,circle,font=\tiny,inner sep=1pt] {$\ltr{a}$} (a);
        \draw[-{to}] (e) to node  [draw,fill=white,circle,font=\tiny,inner sep=1pt] {$\ltr{b}$} (b);
        \draw[-{to}] (e) to node  [draw,fill=white,circle,font=\tiny,inner sep=1pt] {$\ltr{c}$} (c);
        \draw[-{to}] (a) to node  [draw,fill=white,circle,font=\tiny,inner sep=1pt] {$\ltr{a}$} (aa);
        \draw[-{to}] (a) to node  [draw,fill=white,circle,font=\tiny,inner sep=1pt] {$\ltr{b}$} (ab);
        \draw[-{to}] (a) to node  [draw,fill=white,circle,font=\tiny,inner sep=1pt] {$\ltr{c}$} (ac);
        \draw[-{to}] (b) to node  [draw,fill=white,circle,font=\tiny,inner sep=1pt] {$\ltr{a}$} (ba);
        \draw[-{to}] (b) to node  [draw,fill=white,circle,font=\tiny,inner sep=1pt] {$\ltr{b}$} (bb);
        \draw[-{to}] (b) to node  [draw,fill=white,circle,font=\tiny,inner sep=1pt] {$\ltr{c}$} (bc);
        \draw[-{to}] (c) to node  [draw,fill=white,circle,font=\tiny,inner sep=1pt] {$\ltr{a}$} (ca);
        \draw[-{to}] (c) to node  [draw,fill=white,circle,font=\tiny,inner sep=1pt] {$\ltr{b}$} (cb);
        \draw[-{to}] (c) to node  [draw,fill=white,circle,font=\tiny,inner sep=1pt] {$\ltr{c}$} (cc);
        \draw[-{to}] (aa) to node [draw,fill=white,circle,font=\tiny,inner sep=1pt] {$\ltr{a}$} (aaa);
        \draw[-{to}] (aa) to node [draw,fill=white,circle,font=\tiny,inner sep=1pt] {$\ltr{b}$} (aab);
        \draw[-{to}] (aa) to node [draw,fill=white,circle,font=\tiny,inner sep=1pt] {$\ltr{c}$} (aac);
        \draw[-{to}] (ab) to node [draw,fill=white,circle,font=\tiny,inner sep=1pt] {$\ltr{a}$} (aba);
        \draw[-{to}] (ab) to node [draw,fill=white,circle,font=\tiny,inner sep=1pt] {$\ltr{b}$} (abb);
        \draw[-{to}] (ab) to node [draw,fill=white,circle,font=\tiny,inner sep=1pt] {$\ltr{c}$} (abc);
        \draw[-{to}] (ac) to node [draw,fill=white,circle,font=\tiny,inner sep=1pt] {$\ltr{a}$} (aca);
        \draw[-{to}] (ac) to node [draw,fill=white,circle,font=\tiny,inner sep=1pt] {$\ltr{b}$} (acb);
        \draw[-{to}] (ac) to node [draw,fill=white,circle,font=\tiny,inner sep=1pt] {$\ltr{c}$} (acc);
        \draw[-{to}] (ba) to node [draw,fill=white,circle,font=\tiny,inner sep=1pt] {$\ltr{a}$} (baa);
        \draw[-{to}] (ba) to node [draw,fill=white,circle,font=\tiny,inner sep=1pt] {$\ltr{b}$} (bab);
        \draw[-{to}] (ba) to node [draw,fill=white,circle,font=\tiny,inner sep=1pt] {$\ltr{c}$} (bac);
        \draw[-{to}] (bb) to node [draw,fill=white,circle,font=\tiny,inner sep=1pt] {$\ltr{a}$} (bba);
        \draw[-{to}] (bb) to node [draw,fill=white,circle,font=\tiny,inner sep=1pt] {$\ltr{b}$} (bbb);
        \draw[-{to}] (bb) to node [draw,fill=white,circle,font=\tiny,inner sep=1pt] {$\ltr{c}$} (bbc);
        \draw[-{to}] (bc) to node [draw,fill=white,circle,font=\tiny,inner sep=1pt] {$\ltr{a}$} (bca);
        \draw[-{to}] (bc) to node [draw,fill=white,circle,font=\tiny,inner sep=1pt] {$\ltr{b}$} (bcb);
        \draw[-{to}] (bc) to node [draw,fill=white,circle,font=\tiny,inner sep=1pt] {$\ltr{c}$} (bcc);
        \draw[-{to}] (ca) to node [draw,fill=white,circle,font=\tiny,inner sep=1pt] {$\ltr{a}$} (caa);
        \draw[-{to}] (ca) to node [draw,fill=white,circle,font=\tiny,inner sep=1pt] {$\ltr{b}$} (cab);
        \draw[-{to}] (ca) to node [draw,fill=white,circle,font=\tiny,inner sep=1pt] {$\ltr{c}$} (cac);
        \draw[-{to}] (cb) to node [draw,fill=white,circle,font=\tiny,inner sep=1pt] {$\ltr{a}$} (cba);
        \draw[-{to}] (cb) to node [draw,fill=white,circle,font=\tiny,inner sep=1pt] {$\ltr{b}$} (cbb);
        \draw[-{to}] (cb) to node [draw,fill=white,circle,font=\tiny,inner sep=1pt] {$\ltr{c}$} (cbc);
        \draw[-{to}] (cc) to node [draw,fill=white,circle,font=\tiny,inner sep=1pt] {$\ltr{a}$} (cca);
        \draw[-{to}] (cc) to node [draw,fill=white,circle,font=\tiny,inner sep=1pt] {$\ltr{b}$} (ccb);
        \draw[-{to}] (cc) to node [draw,fill=white,circle,font=\tiny,inner sep=1pt] {$\ltr{c}$} (ccc);
    \end{tikzpicture}
    \caption{The standard ternary tree.}
    \label{f:std-tree}
\end{figure}
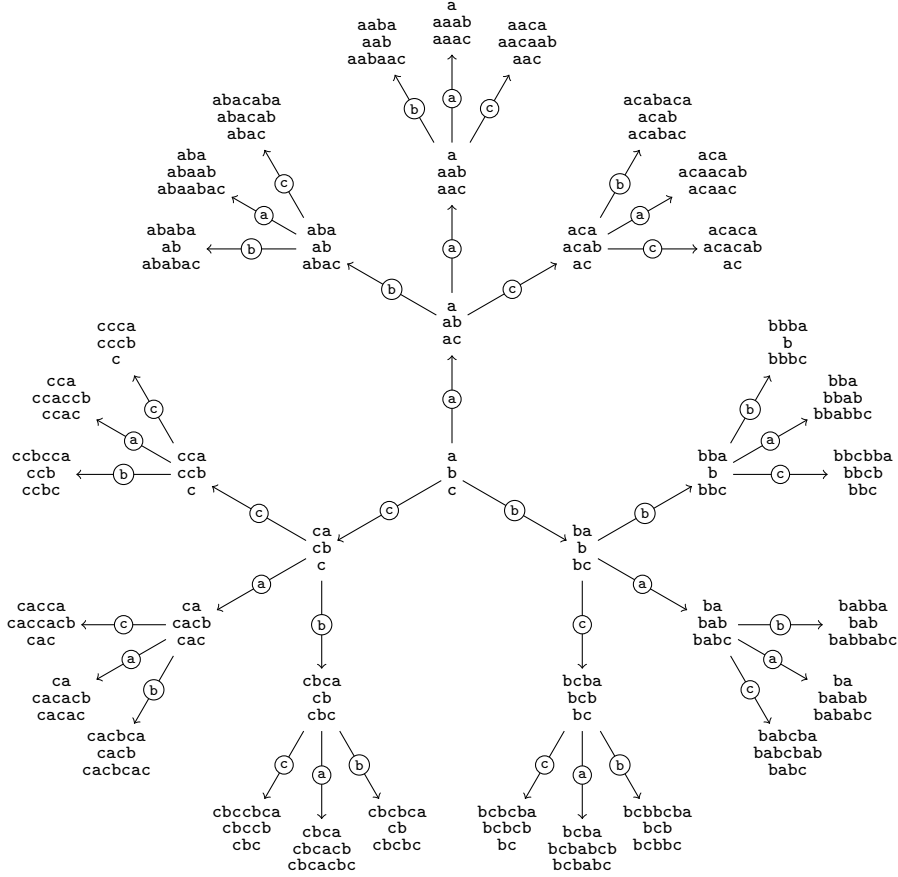

We finish this section with a discussion on periodic points of episturmian morphisms, the main conclusion being that for primitive episturmian morphisms, all of them are strict episturmian words. This is not necessarily the case for non-primitive episturmian morphisms, since for instance permutations admit all infinite words as periodic points. We also have the following less trivial example of this.

\begin{example}\label{ex:contrex}
    Consider $\sigma = \bar\psi_{\ltr{ab}}$ on the alphabet $A = \{\ltr{a},\ltr{b},\ltr{c}\}$. Then $\sigma$ has a fixed point of the form $\bs{x} = \ltr{c}\bs{y}$ where $\bs{y}$ is the Sturmian word determined by the restriction of $\sigma$ to $\{\ltr{a},\ltr{b}\}^*$ (in fact $\bs{y}$ is the Fibonacci word). This fixed point is not uniformly recurrent, hence it cannot be episturmian. Notice also that it is not closed under reversal, although it retains the property of having exactly one left and one right special factor of every length.
\end{example}

\begin{proposition}\label{p:strict}
    Every episturmian morphism $\sigma$ admits an episturmian periodic point. If moreover $\sigma$ is primitive, then all of its periodic points are strict episturmian words and $X_\sigma$ is a strict episturmian shift space.
\end{proposition}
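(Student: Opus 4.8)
My plan is to combine the description $\epi(A)=\{\psi_v\circ\pi\}$ with the directive-word formalism for episturmian words, and then to exploit primitivity through a direct inspection of incidence matrices. Write $\sigma=\psi_v\circ\pi$ with $v\in(A\cup\bar A)^*$ and $\pi\in\perm(A)$; for a word $\Delta$ over $A\cup\bar A$, let $\bs s(\Delta)$ be the episturmian word it directs, and recall the identities $\psi_a(\bs s(\Delta))=\bs s(a\Delta)$ for $a\in A\cup\bar A$ and $\pi(\bs s(\Delta))=\bs s(\pi(\Delta))$ (the latter following from~\eqref{eq:commute}); see~\cite{Glen2009,Justin2002}. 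For the first assertion I would set $\Delta=v\,\pi(v)\,\pi^2(v)\cdots$. When $v\neq\emptyw$ this is a genuine infinite directive word, and the identities above yield $\sigma(\bs s(\Delta))=\bs s\bigl(v\,\pi(\Delta)\bigr)=\bs s(\Delta)$, so $\bs s(\Delta)$ is an episturmian fixed point of $\sigma$. The degenerate situations are easily dealt with by hand: if $\sigma$ is a permutation ($v=\emptyw$) then any episturmian word over $A$ is a periodic point, and if $v$ involves a single letter $a$ of $A$ fixed by $\pi$ then $a^\omega$ is a fixed point of $\sigma$. Writing $n$ for the order of $\pi$, observe that $\Delta=w^\omega$ with $w=v\,\pi(v)\cdots\pi^{n-1}(v)$, and that $\sigma^n=\psi_w$.

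Now assume $\sigma$ primitive, so $\sigma^n=\psi_w$ is primitive as well. The crucial step is to show that every letter of $A$ occurs, barred or unbarred, in $w$. If some letter $c$ did not, then every elementary factor of $\psi_w$ would be of the form $\psi_b$ or $\bar\psi_b$ with $b\neq c$; each of these maps $(A\setminus\{c\})^*$ into itself, hence so does $\psi_w$, and therefore $c$ occurs in no $\psi_w(b)$ with $b\neq c$. Thus the row of the incidence matrix of $\psi_w$ indexed by $c$ vanishes off the diagonal, so no power of this matrix is strictly positive (as $|A|\geq2$), contradicting primitivity. Consequently every letter of $A$ occurs in $w$, hence infinitely often in $w^\omega$, and by the usual criterion this forces the fixed point $\bs s(w^\omega)$ to be a \emph{strict} episturmian word~\cite{Glen2009}. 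Since $\sigma$ is primitive, $X_\sigma$ is minimal and coincides with the orbit closure of this periodic point; as recalled in~\cref{s:epi}, a minimal shift space containing one strict episturmian word consists entirely of strict episturmian words. Hence $X_\sigma$ is a strict episturmian shift space, and every periodic point of $\sigma$, being an element of $X_\sigma$, is a strict episturmian word.

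The point requiring the most care is the clean deployment of the directive-word formalism over $A\cup\bar A$ in the first part: one must make sure that $\bs s(w^\omega)$ is a genuine episturmian word rather than a degenerate non-episturmian sequence such as $\ltr{b}\ltr{a}^\omega$, which is exactly what the short case distinction on the shape of $w$ above takes care of; this concern disappears as soon as $w$ contains two distinct letters, so it is relevant only to the (non-primitive) first assertion. An alternative for that assertion is to first replace $\sigma$ by the standard morphism conjugate to it~\cite{Richomme2003}, which eliminates bars altogether; I would nonetheless keep the directive-word route because it produces the explicit fixed point $\bs s(w^\omega)$ that drives the primitive case. By comparison, the incidence-matrix computation and the final passage to the orbit closure are routine.
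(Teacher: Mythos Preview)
Your argument is correct and structurally parallel to the paper's: both reduce to the power $\sigma^n=\psi_w$ with $w=v\,\pi(v)\cdots\pi^{n-1}(v)$, then argue that primitivity forces every letter of $A$ to occur (barred or not) in $w$, and conclude strictness via the standard criterion together with minimality of $X_\sigma$.

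The one substantive difference is in the first assertion. The paper does not construct a periodic point explicitly; instead it invokes \cite{Justin2005}, Proposition~3.11, which says that $\psi_w$ acts on a certain \emph{finite} set of episturmian words, so some power of $\psi_w$ must fix one of them. Your route---building the fixed point as the word directed by the periodic spinned sequence $w^\omega$---is more concrete and yields the explicit object $\bs s(w^\omega)$ that you then reuse in the primitive case, at the cost of needing the spinned directive-word formalism over $A\cup\bar A$ (whose edge cases you correctly flag). The paper's finite-orbit citation sidesteps that formalism entirely but is less self-contained. For the second assertion, you actually supply the incidence-matrix justification that primitivity forces every letter into $w$, which the paper merely asserts before citing \cite{Droubay2001}, Theorem~7; your version is thus slightly more complete there.
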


\begin{proof}
    By \eqref{eq:commute}, every episturmian morphism has a power of the form $\psi_w$ for some $w\in (A\cup\bar A)^*$: for an episturmian morphism of the form $\psi_u\circ\pi$ with $u\in (A\cup \bar A)^*$ and $\pi\in\perm(A)$, take $w = u\pi(u)\cdots\pi^{n-1}(u)$ where $n$ is order of $\pi$; then $(\psi_u\circ\pi)^n = \psi_w$. By \cite{Justin2005}, Proposition~3.11, $\psi_w$ acts as a transformation on some finite set of episturmian words, thus it must have a power fixing one of them.
  
    Next we let $\sigma$ be a primitive episturmian morphism. By the first part of the statement, $\sigma$ admits some episturmian periodic point, which generates $X_\sigma$; hence $X_\sigma$ is an episturmian shift space. Since all periodic points of $\sigma$ are contained in $X_\sigma$, they must be episturmian. It remains to show that they are strict.
    
    Up to taking a power, we may assume that $\sigma = \psi_w$ for some word $w\in (A\cup\bar A)^*$. The fact that $\sigma$ is primitive then implies that for every letter $a\in A$, either $a$ or $\bar a$ occurs in $u$. By \cite{Droubay2001}, Theorem~7, the periodic points of $\sigma$ must be strict episturmian words. 
\end{proof}

\section{Conjugacy classes of episurmian morphisms}
\label{s:conjugacy}

We focus in this section on a description of conjugacy classes of episturmian morphisms. We first recall the following foundational result by Richomme; see also~\cite{Richomme2003a} for related algorithms.

\begin{theorem}[\cite{Richomme2003}, Theorem~5.1]\label{t:richomme}
    A morphism is episturmian if and only if it is a right conjugate of a standard morphism, which is moreover unique. 
\end{theorem}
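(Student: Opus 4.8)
The plan is to prove three statements separately: \emph{(I)} every episturmian morphism is a right conjugate of \emph{some} standard morphism; \emph{(II)} conversely, every right conjugate of a standard morphism is episturmian; and \emph{(III)} the standard morphism in (I) is unique. Two elementary facts about conjugation carry most of the weight. The first is a composition rule: if $f$ is a right conjugate of $f'$ via $y$ (meaning $f'(c)\,y = y\,f(c)$ for all $c \in A$) and $g$ is a right conjugate of $g'$ via $z$, then $f \circ g$ is a right conjugate of $f' \circ g'$ via $f'(z)\,y$; this comes out of applying $f'$ to $g'(b)\,z = z\,g(b)$ and then using $f'(u)\,y = y\,f(u)$. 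The second is the identity $\psi_a(b)\,a = a\,\bar\psi_a(b)$, valid for all $a,b \in A$, which says that $\bar\psi_a$ is a right conjugate of $\psi_a$ (via the single letter $a$); comparing first letters of images, one also checks that $\psi_a$ and $\bar\psi_a$ are the \emph{only} right conjugates of $\psi_a$, and that a permutation has no right conjugate but itself once $|A| \geq 2$ (the case $|A| = 1$ being trivial throughout).

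For (I), I would start from the representation $\sigma = \psi_{c_1} \circ \cdots \circ \psi_{c_k} \circ \pi$ with $c_i \in A \cup \bar A$ and $\pi \in \perm(A)$, valid for every $\sigma \in \epi(A)$. Each factor $\psi_{c_i}$, which is $\psi_{a}$ or $\bar\psi_{a}$ for $a \in A$ the base letter of $c_i$, is a right conjugate of $\psi_{a}$ --- trivially when $c_i \in A$, by the identity above when $c_i = \bar a$. Iterating the composition rule (with $\pi$ a right conjugate of itself) then exhibits $\sigma$ as a right conjugate of $\psi_u \circ \pi$ where $u \in A^*$ collects the base letters; and $\psi_u \circ \pi \in \std(A)$.

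For (II), I would induct on the length of a word $u \in A^*$ representing the standard morphism $\mu = \psi_u \circ \pi$. The base $u = \emptyw$ is handled by the remark that $\pi$ has only itself as a right conjugate, and permutations are episturmian. For the step, write $\mu = \psi_a \circ \mu'$ with $\mu' \in \std(A)$ strictly shorter, and let $\sigma$ be a right conjugate of $\mu$ via a word $w$, so $\psi_a(\mu'(b))\,w = w\,\sigma(b)$ for all $b$. The crux is a factorization lemma: $w$ equals $\psi_a(z)$ or $\psi_a(z)\,a$ for a word $z$ that is admissible as a conjugating word for $\mu'$ (i.e.\ $z$ is a prefix of $\mu'(b)\,z$ for every $b$). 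Granting this, the composition rule identifies $\sigma$ with $\rho \circ \tau$, where $\rho \in \{\psi_a, \bar\psi_a\}$ is the matching right conjugate of $\psi_a$ and $\tau$ is the right conjugate of $\mu'$ via $z$ (the right conjugate through a given word being unique); since $\tau \in \epi(A)$ by induction and $\epi(A)$ is a monoid containing $\psi_a$ and $\bar\psi_a$, we conclude $\sigma \in \epi(A)$. I expect the factorization lemma to be the main obstacle; its proof is combinatorics on words resting on the block structure of $\psi_a(A^*)$ --- every word there parses uniquely into blocks $a$ and $ab$ ($b \neq a$), so every non-$a$ letter is immediately preceded by $a$, and $\psi_a$ is injective and both preserves and reflects the prefix order. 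Taking $z$ maximal with $\psi_a(z)$ a prefix of $w$, maximality plus the block structure pin the leftover suffix of $w$ to $\emptyw$ or $a$, and a parallel argument passes admissibility from $w$ (for $\mu$) down to $z$ (for $\mu'$).

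For (III), the decisive observation is that a standard morphism $\mu$ has no proper left conjugate: if $\nu(b)\,w = w\,\mu(b)$ for all $b$ with $w \neq \emptyw$, then every $\mu(b)$ ends with the last letter of $w$; but writing $\mu = \psi_u \circ \pi$, an easy induction on $u$ gives that $\psi_u(c)$ ends with $c$ for each letter $c$, so $\mu(b) = \psi_u(\pi(b))$ ends with $\pi(b)$, and these last letters are pairwise distinct since $\pi$ is a bijection of $A$ and $|A| \geq 2$. The analysis behind (II) also shows that the right conjugates of a morphism are parametrized by its admissible conjugating words, which form a prefix-chain; using this one checks that the conjugacy class of an episturmian morphism is exactly the finite chain of iterated right conjugates of the standard morphism produced in (I), in which --- by the previous observation --- no member but the bottom one is standard. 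Hence the standard morphism is unique, completing the proof.
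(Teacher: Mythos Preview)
The paper does not prove this theorem: it is stated as a citation of Richomme's result \cite[Theorem~5.1]{Richomme2003}, so there is no ``paper's own proof'' to compare against. Your three-part strategy (I)--(III) is the natural one, and parts (I) and (III) are essentially correct; in particular, the observation that $\psi_u(c)$ always ends in $c$, so that a standard morphism has no proper left conjugate, is exactly the right lever for uniqueness, and the prefix-chain structure of admissible conjugating words is genuine.

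There is, however, a real gap in your sketch of (II). The claim that $\psi_a$ \emph{reflects} the prefix order is false: with $a\neq b$, $\psi_a(a)=a$ is a prefix of $\psi_a(b)=ab$, yet $a$ is not a prefix of $b$. More damagingly, your ``take $z$ maximal with $\psi_a(z)$ a prefix of $w$'' can produce a $z$ that is \emph{not} admissible for $\mu'$. Take $A=\{a,b\}$, $\mu'=\psi_b$, $\mu=\psi_a\circ\mu'=\psi_{ab}$, and $w=aba$ (which is admissible for $\mu$, since $\pal(ab)=aba$). The maximal $z$ with $\psi_a(z)$ a prefix of $w$ is $z=ba$ (because $\psi_a(ba)=aba=w$), leaving remainder $\emptyw$; but $z=ba$ is not admissible for $\mu'=\psi_b$, since $\mu'(b)\,z=b\cdot ba=bba$ does not have $ba$ as a prefix. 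The correct decomposition here is $w=\psi_a(b)\cdot a$ with $z=b$, which \emph{is} admissible for $\mu'$ and yields $\sigma=\bar\psi_a\circ\bar\psi_b$. So your factorization lemma is the right statement, but the proposed proof (via maximal $z$ and prefix-reflection) does not work; choosing the right $z$ requires a more careful case analysis, and this is where Richomme's original argument does real work.
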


Thanks to this theorem, the following notion is well-defined.
\begin{definition}\label{d:index}
    Let $\sigma\in\epi(A)$ be an episturmian morphism. The \emph{conjugacy index} of $\sigma$, denoted $\ind(\sigma)$, is the length of the unique word $w$ such that $w\sigma w^{-1}\in\std(A)$.
\end{definition}

\cref{tb:conj} gives an example of an episturmian conjugacy class ordered by index.

\begin{table}
    \begin{tabular}{ccc}
        \toprule
        Morphism & Prefix of $\pal$ & Conjugacy index \\
        \midrule
        $\ltr{a}\mapsto\ltr{ababa}, \ltr{b}\mapsto\ltr{ababac}, \ltr{c}\mapsto\ltr{ab}$ & $\emptyw$ & 0 \\
        $\ltr{a}\mapsto\ltr{babaa}, \ltr{b}\mapsto\ltr{babaca}, \ltr{c}\mapsto\ltr{ba}$ & $\ltr{a}$ & 1 \\
        $\ltr{a}\mapsto\ltr{abaab}, \ltr{b}\mapsto\ltr{abacab}, \ltr{c}\mapsto\ltr{ab}$ & $\ltr{ab}$ & 2 \\
        $\ltr{a}\mapsto\ltr{baaba}, \ltr{b}\mapsto\ltr{bacaba}, \ltr{c}\mapsto\ltr{ba}$ & $\ltr{aba}$ & 3 \\
        $\ltr{a}\mapsto\ltr{aabab}, \ltr{b}\mapsto\ltr{acabab}, \ltr{c}\mapsto\ltr{ab}$ & $\ltr{abab}$ & 4 \\
        $\ltr{a}\mapsto\ltr{ababa}, \ltr{b}\mapsto\ltr{cababa}, \ltr{c}\mapsto\ltr{ba}$ & $\ltr{ababa}$ & 5 \\
        \bottomrule\addlinespace
    \end{tabular}
    \caption{An episturmian conjugacy class.}
    \label{tb:conj}
\end{table}

We next proceed to give explicit formulas for the conjugacy index in \cref{c:index} based on \cref{p:rcp}, which relates episturmian conjugacy classes to the following operator. Let $\pal\from A^*\to A^*$ be the \emph{iterated palindromic closure}, defined inductively by 
\begin{equation*}
    \pal(\emptyw) = \emptyw,\quad \pal(ua) = (\pal(u)a)^{(+)}\ \text{for}\ u\in A^*, \ a\in A,
\end{equation*}
where $x^{(+)}$ is the shortest palindrome having $x$ as a prefix. This notion originates from a paper of de Luca \cite{Luca1997a}, where it is denoted $\psi$; see \cite{Perrin2023} for recent work concerning $\pal$. Some of its values on ternary words are given in \cref{f:pal-tree} as illustration. 

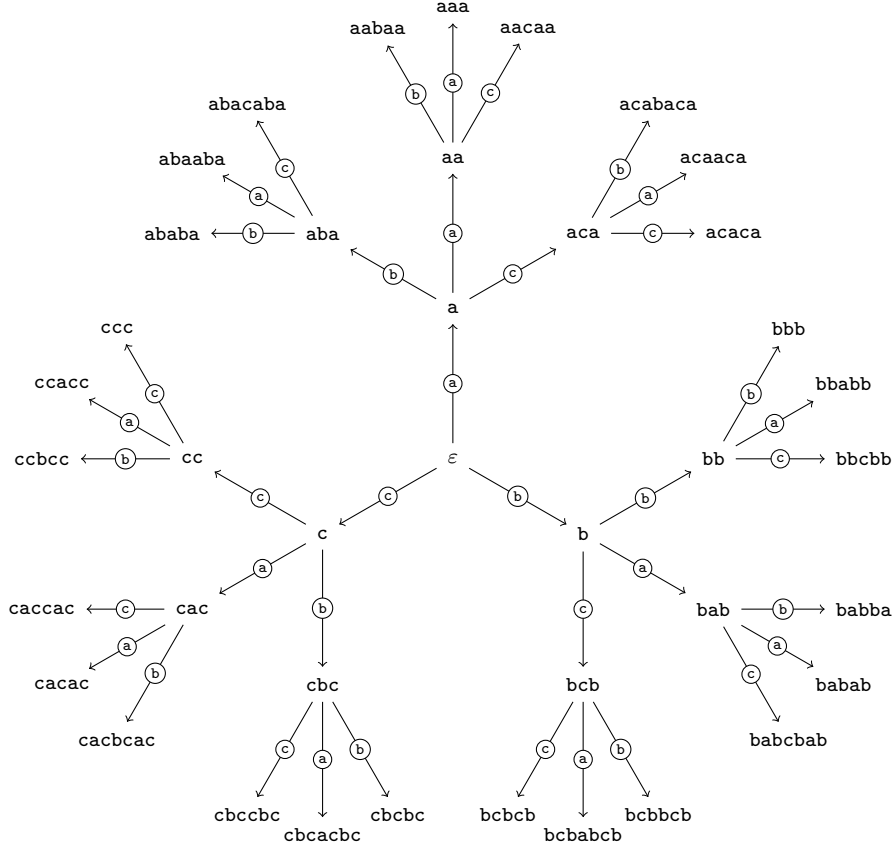
\begin{figure}
    \begin{tikzpicture}[font=\footnotesize,every node/.style={inner sep=4pt}]
        \node (e) at (0,0)   {$\emptyw$};
            \node (a) at (90:2)  {$\ltr{a}$};
                \node (aa) at ([shift=({90:2})]a) {$\ltr{aa}$};
                    \node (aaa) at ([shift=({90:2})]aa)  {$\ltr{aaa}$};
                    \node (aab) at ([shift=({120:2})]aa) {$\ltr{aabaa}$};
                    \node (aac) at ([shift=({60:2})]aa)  {$\ltr{aacaa}$};
                \node (ab) at ([shift=({150:2})]a) {$\ltr{aba}$};
                    \node (aba) at ([shift=({150:2})]ab) {$\ltr{abaaba}$};
                    \node (abb) at ([shift=({180:2})]ab) {$\ltr{ababa}$};
                    \node (abc) at ([shift=({120:2})]ab) {$\ltr{abacaba}$};
                \node (ac) at ([shift=({30:2})]a) {$\ltr{aca}$};
                    \node (aca) at ([shift=({30:2})]ac) {$\ltr{acaaca}$};
                    \node (acb) at ([shift=({60:2})]ac) {$\ltr{acabaca}$};
                    \node (acc) at ([shift=({0: 2})]ac) {$\ltr{acaca}$};
            \node (b) at (330:2) {$\ltr{b}$};
                \node (ba) at ([shift=({330:2})]b) {$\ltr{bab}$};
                    \node (baa) at ([shift=({330:2})]ba) {$\ltr{babab}$};
                    \node (bab) at ([shift=({360:2})]ba) {$\ltr{babba}$};
                    \node (bac) at ([shift=({300:2})]ba) {$\ltr{babcbab}$};
                \node (bb) at ([shift=({390:2})]b) {$\ltr{bb}$};
                    \node (bba) at ([shift=({390:2})]bb) {$\ltr{bbabb}$};
                    \node (bbb) at ([shift=({420:2})]bb) {$\ltr{bbb}$};
                    \node (bbc) at ([shift=({360:2})]bb) {$\ltr{bbcbb}$};
                \node (bc) at ([shift=({270:2})]b) {$\ltr{bcb}$};
                    \node (bca) at ([shift=({270:2})]bc) {$\ltr{bcbabcb}$};
                    \node (bcb) at ([shift=({300:2})]bc) {$\ltr{bcbbcb}$};
                    \node (bcc) at ([shift=({240:2})]bc) {$\ltr{bcbcb}$};
            \node (c) at (210:2) {$\ltr{c}$};
                \node (ca) at ([shift=({210:2})]c) {$\ltr{cac}$};
                    \node (caa) at ([shift=({210:2})]ca) {$\ltr{cacac}$};
                    \node (cab) at ([shift=({240:2})]ca) {$\ltr{cacbcac}$};
                    \node (cac) at ([shift=({180:2})]ca) {$\ltr{caccac}$};
                \node (cb) at ([shift=({270:2})]c) {$\ltr{cbc}$};
                    \node (cba) at ([shift=({270:2})]cb) {$\ltr{cbcacbc}$};
                    \node (cbb) at ([shift=({300:2})]cb) {$\ltr{cbcbc}$};
                    \node (cbc) at ([shift=({240:2})]cb) {$\ltr{cbccbc}$};
                \node (cc) at ([shift=({150:2})]c) {$\ltr{cc}$};
                    \node (cca) at ([shift=({150:2})]cc) {$\ltr{ccacc}$};
                    \node (ccb) at ([shift=({180:2})]cc) {$\ltr{ccbcc}$};
                    \node (ccc) at ([shift=({120:2})]cc) {$\ltr{ccc}$};
        \draw[-{to}] (e) to node  [draw,fill=white,circle,inner sep=1pt,font=\tiny] {$\ltr{a}$} (a);
        \draw[-{to}] (e) to node  [draw,fill=white,circle,inner sep=1pt,font=\tiny] {$\ltr{b}$} (b);
        \draw[-{to}] (e) to node  [draw,fill=white,circle,inner sep=1pt,font=\tiny] {$\ltr{c}$} (c);
        \draw[-{to}] (a) to node  [draw,fill=white,circle,inner sep=1pt,font=\tiny] {$\ltr{a}$} (aa);
        \draw[-{to}] (a) to node  [draw,fill=white,circle,inner sep=1pt,font=\tiny] {$\ltr{b}$} (ab);
        \draw[-{to}] (a) to node  [draw,fill=white,circle,inner sep=1pt,font=\tiny] {$\ltr{c}$} (ac);
        \draw[-{to}] (b) to node  [draw,fill=white,circle,inner sep=1pt,font=\tiny] {$\ltr{a}$} (ba);
        \draw[-{to}] (b) to node  [draw,fill=white,circle,inner sep=1pt,font=\tiny] {$\ltr{b}$} (bb);
        \draw[-{to}] (b) to node  [draw,fill=white,circle,inner sep=1pt,font=\tiny] {$\ltr{c}$} (bc);
        \draw[-{to}] (c) to node  [draw,fill=white,circle,inner sep=1pt,font=\tiny] {$\ltr{a}$} (ca);
        \draw[-{to}] (c) to node  [draw,fill=white,circle,inner sep=1pt,font=\tiny] {$\ltr{b}$} (cb);
        \draw[-{to}] (c) to node  [draw,fill=white,circle,inner sep=1pt,font=\tiny] {$\ltr{c}$} (cc);
        \draw[-{to}] (aa) to node [draw,fill=white,circle,inner sep=1pt,font=\tiny] {$\ltr{a}$} (aaa);
        \draw[-{to}] (aa) to node [draw,fill=white,circle,inner sep=1pt,font=\tiny] {$\ltr{b}$} (aab);
        \draw[-{to}] (aa) to node [draw,fill=white,circle,inner sep=1pt,font=\tiny] {$\ltr{c}$} (aac);
        \draw[-{to}] (ab) to node [draw,fill=white,circle,inner sep=1pt,font=\tiny] {$\ltr{a}$} (aba);
        \draw[-{to}] (ab) to node [draw,fill=white,circle,inner sep=1pt,font=\tiny] {$\ltr{b}$} (abb);
        \draw[-{to}] (ab) to node [draw,fill=white,circle,inner sep=1pt,font=\tiny] {$\ltr{c}$} (abc);
        \draw[-{to}] (ac) to node [draw,fill=white,circle,inner sep=1pt,font=\tiny] {$\ltr{a}$} (aca);
        \draw[-{to}] (ac) to node [draw,fill=white,circle,inner sep=1pt,font=\tiny] {$\ltr{b}$} (acb);
        \draw[-{to}] (ac) to node [draw,fill=white,circle,inner sep=1pt,font=\tiny] {$\ltr{c}$} (acc);
        \draw[-{to}] (ba) to node [draw,fill=white,circle,inner sep=1pt,font=\tiny] {$\ltr{a}$} (baa);
        \draw[-{to}] (ba) to node [draw,fill=white,circle,inner sep=1pt,font=\tiny] {$\ltr{b}$} (bab);
        \draw[-{to}] (ba) to node [draw,fill=white,circle,inner sep=1pt,font=\tiny] {$\ltr{c}$} (bac);
        \draw[-{to}] (bb) to node [draw,fill=white,circle,inner sep=1pt,font=\tiny] {$\ltr{a}$} (bba);
        \draw[-{to}] (bb) to node [draw,fill=white,circle,inner sep=1pt,font=\tiny] {$\ltr{b}$} (bbb);
        \draw[-{to}] (bb) to node [draw,fill=white,circle,inner sep=1pt,font=\tiny] {$\ltr{c}$} (bbc);
        \draw[-{to}] (bc) to node [draw,fill=white,circle,inner sep=1pt,font=\tiny] {$\ltr{a}$} (bca);
        \draw[-{to}] (bc) to node [draw,fill=white,circle,inner sep=1pt,font=\tiny] {$\ltr{b}$} (bcb);
        \draw[-{to}] (bc) to node [draw,fill=white,circle,inner sep=1pt,font=\tiny] {$\ltr{c}$} (bcc);
        \draw[-{to}] (ca) to node [draw,fill=white,circle,inner sep=1pt,font=\tiny] {$\ltr{a}$} (caa);
        \draw[-{to}] (ca) to node [draw,fill=white,circle,inner sep=1pt,font=\tiny] {$\ltr{b}$} (cab);
        \draw[-{to}] (ca) to node [draw,fill=white,circle,inner sep=1pt,font=\tiny] {$\ltr{c}$} (cac);
        \draw[-{to}] (cb) to node [draw,fill=white,circle,inner sep=1pt,font=\tiny] {$\ltr{a}$} (cba);
        \draw[-{to}] (cb) to node [draw,fill=white,circle,inner sep=1pt,font=\tiny] {$\ltr{b}$} (cbb);
        \draw[-{to}] (cb) to node [draw,fill=white,circle,inner sep=1pt,font=\tiny] {$\ltr{c}$} (cbc);
        \draw[-{to}] (cc) to node [draw,fill=white,circle,inner sep=1pt,font=\tiny] {$\ltr{a}$} (cca);
        \draw[-{to}] (cc) to node [draw,fill=white,circle,inner sep=1pt,font=\tiny] {$\ltr{b}$} (ccb);
        \draw[-{to}] (cc) to node [draw,fill=white,circle,inner sep=1pt,font=\tiny] {$\ltr{c}$} (ccc);
    \end{tikzpicture}
    \caption{Values of $\pal$ over a ternary alphabet.}
    \label{f:pal-tree}
\end{figure}

Recall Justin's left and right formulas from~\cite{Justin2005} relating $\pal$ with the morphisms $\psi_a$ and $\bar\psi_a$:
\begin{gather}
    \pal(uv) = \psi_u(\pal(v))\pal(u),\tag{J}\label{J}\\
    \pal(uv) = \pal(u)\bar\psi_u(\pal(v)).\tag{J\textquotesingle}\label{J'}
\end{gather}

Let $\gcp$ denote the \emph{greatest common prefix} operator, i.e.\ $\gcp(u,v)$ is the longest word which is a prefix of both $u$ and $v$. It is related with $\pal$ by the following.
\begin{lemma}\label{l:pal-gcp}
    Let $u\in A^*$ and $a\neq b\in A$. Then $\pal(u)a$ is a prefix of $\psi_u(ab)$ and $\pal(u) = \gcp( \psi_u(ab), \psi_u(ba) )$.
\end{lemma}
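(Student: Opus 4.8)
The plan is to separate the two assertions and derive the second from the first. For the first assertion — that $\pal(u)a$ is a prefix of $\psi_u(ab)$ whenever $a\neq b$ — I would argue by induction on $|u|$, proving the statement \emph{simultaneously for all ordered pairs of distinct letters} $(a,b)$. The base case $u=\emptyw$ is immediate, since $\pal(\emptyw)=\emptyw$, $\psi_\emptyw=\id$, and $a$ is a prefix of $ab$.

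For the inductive step I would pass from $u$ to $uc$ with $c\in A$, using the identity $\pal(uc)=\psi_u(c)\pal(u)$ (this is Justin's formula \eqref{J} with $v=c$, since $\pal(c)=c$) together with $\psi_{uc}=\psi_u\circ\psi_c$. Fix distinct letters $a,b$ and split into three subcases according to $c$. If $c=a$, then $\psi_a(ab)=aab$, so $\psi_{uc}(ab)=\psi_u(a)\psi_u(ab)$ while $\pal(uc)a=\psi_u(a)\pal(u)a$; after cancelling the common prefix $\psi_u(a)$, the claim reduces exactly to the inductive hypothesis for the pair $(a,b)$. The subcase $c=b$ is symmetric, using $\psi_b(ab)=bab$. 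If $c\notin\{a,b\}$, then $\psi_c(ab)=(ca)(cb)$, hence $\psi_{uc}(ab)=\psi_u(c)\psi_u(acb)$ and $\pal(uc)a=\psi_u(c)\pal(u)a$; cancelling $\psi_u(c)$, it suffices to show $\pal(u)a$ is a prefix of $\psi_u(acb)=\psi_u(a)\psi_u(c)\psi_u(b)$, which follows by applying the inductive hypothesis to the (still distinct) pair $(a,c)$, giving $\pal(u)a$ as a prefix of $\psi_u(ac)$ and therefore of $\psi_u(acb)$.

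Once the prefix statement is in hand, the second assertion follows with no additional induction. By the first part, $\pal(u)a$ is a prefix of $\psi_u(ab)$, and, applying it to the reversed pair, $\pal(u)b$ is a prefix of $\psi_u(ba)$; hence $\pal(u)$ is a common prefix, so $\pal(u)$ is a prefix of $\gcp(\psi_u(ab),\psi_u(ba))$. Conversely, were this greatest common prefix strictly longer than $\pal(u)$, it would extend $\pal(u)$ by some letter $d$; comparing with the length-$(|\pal(u)|+1)$ prefix $\pal(u)a$ of $\psi_u(ab)$ forces $d=a$, while comparing with the prefix $\pal(u)b$ of $\psi_u(ba)$ forces $d=b$, contradicting $a\neq b$. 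Therefore $\gcp(\psi_u(ab),\psi_u(ba))=\pal(u)$.

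I expect the only subtle point to be keeping the induction phrased uniformly over all pairs of distinct letters: the subcase $c\notin\{a,b\}$ consumes the inductive hypothesis for the auxiliary pair $(a,c)$ rather than for $(a,b)$, so the statement must be carried for all pairs at once. Everything else is a routine unfolding of the definitions of $\psi_c$ and $\pal$ and of left-cancellation in the free monoid $A^*$.
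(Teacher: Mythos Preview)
Your proof is correct and follows essentially the same approach as the paper: induction on $|u|$ using Justin's formula $\pal(uc)=\psi_u(c)\pal(u)$ and unfolding $\psi_c(ab)$, with the second assertion deduced immediately from the first. The only cosmetic difference is that the paper organizes the inductive step into two cases ($a=c$ versus $a\neq c$, the latter absorbing both $c=b$ and $c\notin\{a,b\}$ via an auxiliary word $d$), while you use three; the paper likewise tacitly invokes the hypothesis for the pair $(a,c)$, so your remark about carrying the statement for all pairs simultaneously applies equally to its argument.
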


\begin{proof}
    The proof is done by induction on $|u|$. If $u=\emptyw$ the statement is trivial. Assume the result holds for $u$ and let $c\in A$. Fix $b\in A$ with $a\neq b$. The rest of the proof has two cases.
    
    Case $a\neq c$. By induction, there is $v\in A^*$ such that $\psi_u(ac) = \pal(u)av$. Let $d = b$ if $c=b$ and $d=\emptyw$ otherwise. Using Justin's formula \eqref{J} we get
        \begin{align*}
            \psi_{uc}(ab) &= \psi_u(cacd) = \psi_u(c)\psi_u(ac)\psi_u(d) \\
                          &= \psi_u(c)\pal(u)av\psi_u(d) = \pal(uc)av\psi_u(d). 
        \end{align*}

    Case $a=c$. By induction, there is $v\in A^*$ such that $\psi_u(ab) = \pal(u)av$ and, using Justin's formula \eqref{J},
        \begin{equation*}
            \psi_{ua}(ab) = \psi_u(aab) = \psi_u(a)\psi_u(ab) = \psi_u(a)\pal(u)av = \pal(ua)av. 
        \end{equation*}

The second part of the claim follows immediately.
\end{proof}

For a morphism $\sigma\from A^*\to A^*$, we let $\Vert\sigma\Vert=\sum_{a\in A}|\sigma(a)|$. In \cite{Justin2005}, Corollary~2.7, Justin derived the following formula as a consequence of \eqref{J}:
\begin{equation}
    \label{length-pal}
    |\pal(u)|+1 = \frac{\Vert \psi_u\Vert - 1}{|A|-1}.
\end{equation}

As a case study, consider the binary alphabet $A=\{\ltr{a},\ltr{b}\}$. Then the above formula simplifies to $|\pal(u)| = |\psi_u(\ltr{ab})|-2$. For instance, when $u = \ltr{aba}$, we have $\psi_u(\ltr{a}) = \ltr{aba}$, $\psi_u(\ltr{b}) = \ltr{abaab}$, so the formula yields $|\pal(u)| = 6$. And indeed, one checks that $\pal(u) = \ltr{abaaba}$.

Comparing the equality $|\pal(u)| = |\psi_u(\ltr{ab})|-2$ with \cref{l:pal-gcp} gives a proof of the fact, dating back to a paper of Séébold, that for every standard Sturmian morphism $\sigma$, the two words $\sigma(\ltr{ab})$ and $\sigma(\ltr{ba})$ agree except for their two last letters (\cite{Seebold1998}, Property~4).

In general, the right-hand side of \eqref{length-pal} is also, according to Richomme, the formula for the size of conjugacy classes of episturmian morphisms.

\begin{proposition}[\cite{Richomme2003}, Property~4.1]
    Let $\sigma$ be an episturmian morphism. The cardinality of the conjugacy class of $\sigma$ equals $\frac{\Vert\sigma\Vert-1}{|A|-1}$.
\end{proposition}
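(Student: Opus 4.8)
The plan is to pass to the unique standard morphism in the conjugacy class of $\sigma$ and then describe that class completely. By \cref{t:richomme}, $\sigma$ is a right conjugate of a unique standard morphism $\tau$, so $\sigma$ and $\tau$ generate the same conjugacy class; and conjugate morphisms have images of equal lengths (from $\tau(a)w = w\sigma(a)$ one gets $|\tau(a)| = |\sigma(a)|$), hence $\Vert\tau\Vert = \Vert\sigma\Vert$. Writing $\tau = \psi_u\circ\pi$ with $u\in A^*$ and $\pi\in\perm(A)$, we have $\Vert\tau\Vert = \Vert\psi_u\Vert$, and so by \eqref{length-pal} the number $\frac{\Vert\sigma\Vert-1}{|A|-1}$ equals $|\pal(u)|+1$. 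It therefore suffices to prove that the conjugacy class of $\tau = \psi_u\circ\pi$ has $|\pal(u)|+1$ elements; throughout we assume $|A|\geq 2$, without which the statement is vacuous.

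The core of the argument is to organize the class into a chain of one-letter conjugates. Every morphism in the class is non-erasing (it has the same image lengths as $\tau$), so for such a $\rho$ there is a morphism $\rho'$ and a letter $z$ with $\rho(a)z = z\rho'(a)$ for all $a$ exactly when all the images $\rho(a)$ begin with a common letter, necessarily $z$, and then $\rho'$ is unique; dually, $\rho$ arises this way from a unique predecessor exactly when the $\rho(a)$ share a common last letter. View the class as a digraph with an edge $\rho\to\rho'$ whenever $\rho(a)z = z\rho'(a)$ for some letter $z$; then in- and out-degrees are at most $1$. The digraph is connected, since a word $w$ realizing a conjugation $\rho(a)w = w\rho'(a)$ can be consumed one letter at a time---at each stage the leading letter of $w$ is forced to be the common first letter of the current images, so no cancellation occurs. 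It is acyclic, since a cycle would yield a nonempty positive word commuting, in the free group over $A$, with every $\rho(a)$, which is impossible: each $\rho(a)$ is conjugate there to $\psi_u(\pi(a))$, and for $b\neq b'$ the words $\psi_u(b),\psi_u(b')$ end with the distinct letters $b,b'$, so the $\psi_u(\pi(a))$ are not all contained in one cyclic subgroup, and such a subset has trivial centralizer. A connected acyclic digraph with in- and out-degrees at most $1$ is a single directed path. Its source---the unique vertex with no common final letter---is $\psi_u\circ\pi$, because $\psi_u(\pi(a))$ ends with $\pi(a)$. Hence the class has $1+\ell$ elements, where $\ell$ is the length of the path.

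Finally I would identify $\ell$ with $|\pal(u)|$. Taking $v$ to be a single letter $b$ in Justin's formulas \eqref{J} and \eqref{J'}, and using $\pal(b) = b$, one gets $\psi_u(b)\,\pal(u) = \pal(ub) = \pal(u)\,\bar\psi_u(b)$ for each $b\in A$; with $b = \pi(a)$ this says that $\psi_u\circ\pi$ and $\bar\psi_u\circ\pi$ are conjugate via the word $\pal(u)$, so $\bar\psi_u\circ\pi$ lies in the class and, having no common initial letter, is the sink of the path. Now if $\psi_u\circ\pi = \rho_0\to\rho_1\to\cdots\to\rho_\ell = \bar\psi_u\circ\pi$ is the path, with connecting letters $z_1,\dots,z_\ell$, then $W := z_1\cdots z_\ell$ is a word of length $\ell$ with $\rho_0(a)W = W\rho_\ell(a)$ for all $a$; and a word with this property is unique, by the same centralizer-triviality argument. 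Since $\pal(u)$ has the same property, $W = \pal(u)$, hence $\ell = |\pal(u)|$ and the class has $|\pal(u)|+1 = \frac{\Vert\sigma\Vert-1}{|A|-1}$ elements.

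The step I expect to be the main obstacle is the structural claim of the second paragraph---that the class is a single directed path, with no branching and no cycle. This rests on verifying that each one-letter conjugation step is forced and free of cancellation, together with the free-group argument that excludes cycles and later pins down the conjugating word. The degenerate cases (permutations, where $u=\emptyw$ and the path is a point, and non-primitive episturmian morphisms) are subsumed by the same reasoning; the only genuine exclusion is $|A|=1$, for which the claimed quantity is undefined.
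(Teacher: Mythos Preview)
Your argument is correct. The reduction to the standard representative, the one-letter-at-a-time decomposition of any conjugating word (which works because all images are non-erasing and must share their first letter with the leading letter of the conjugating word), the free-group centralizer argument excluding cycles and pinning down the conjugating word uniquely, and the identification of the endpoints via Justin's formulas all go through as you describe.

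The paper itself does not prove this proposition; it is cited from Richomme. However, the paper immediately offers its own explanation via \cref{p:rcp}, whose proof (using \cref{l:pal-gcp}) is different from yours: there one shows directly that a word $w$ conjugates $\psi_u$ to a genuine morphism if and only if $w$ is a prefix of $\pal(u)=\gcp(\psi_u(ab),\psi_u(ba))$, whence the class has exactly $|\pal(u)|+1$ members and \eqref{length-pal} finishes. That route is shorter and simultaneously yields the explicit list of conjugating words, which the paper then exploits (\cref{c:index}, \cref{c:gcs-gcp}). Your route, by contrast, establishes the linear order on the class intrinsically---without knowing in advance that $\pal(u)$ is the relevant object---at the cost of invoking free-group centralizers; it recovers $\pal(u)$ only at the end, as the unique word carrying $\psi_u\circ\pi$ to $\bar\psi_u\circ\pi$. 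Both arguments ultimately rest on the same rigidity (two images of $\psi_u$ never commute), but they organize it differently.
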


In light of \eqref{length-pal}, we conclude that the episturmian conjugacy class of $\psi_u$ contains $|\pal(u)|+1$ morphisms. The next proposition provides a direct explanation of this fact.
\begin{proposition}\label{p:rcp}
    Let $\sigma\in\std(A)$ with $\sigma = \psi_u\circ\pi$ for $u\in A^*$, $\pi\in\perm(A)$. A morphism $\rho$ is a right conjugate of $\sigma$ if and only if $\rho = w^{-1}\sigma w$ for some prefix $w$ of $\pal(u)$.
\end{proposition}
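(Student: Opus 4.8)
The plan is to prove the two implications separately, reducing both to elementary facts about prefixes of purely periodic words; the only substantial input is Justin's formulas~\eqref{J} and~\eqref{J'}. Write $P=\pal(u)$. Applying~\eqref{J} and~\eqref{J'} to the decomposition $u\cdot b$ with $b\in A$ (and $\pal(b)=b$) gives $\pal(ub)=\psi_u(b)\,P$ and $\pal(ub)=P\,\bar\psi_u(b)$, hence
\[
    \psi_u(b)\,P \;=\; P\,\bar\psi_u(b)\qquad(b\in A).
\]
Since $\pal(u)b$ is a prefix of $\pal(ub)$ by definition of the iterated palindromic closure, $P$ is a prefix of $\psi_u(b)\,P$; writing $\psi_u(b)\,P=P\,r$ and iterating yields $\psi_u(b)^kP=P\,r^k$ for all $k\ge1$, so $P$ is a prefix of the purely periodic word $\psi_u(b)^\omega=\psi_u(b)\psi_u(b)\cdots$. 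As $\sigma(a)=\psi_u(\pi(a))$, we conclude that $P$ is a common prefix of $\sigma(a)^\omega$ for every $a\in A$ (this could also be extracted from \cref{l:pal-gcp}).

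The implication from prefixes of $P$ to right conjugates is then short: if $w$ is a prefix of $P$, it is a prefix of every $\sigma(a)^\omega=\sigma(a)\,\sigma(a)^\omega$, hence so is $\sigma(a)w$, and since $|\sigma(a)w|\ge|w|$ the equation $\sigma(a)w=w\rho(a)$ has a unique solution $\rho(a)\in A^*$; thus $\rho=w^{-1}\sigma w$ is a well-defined endomorphism of $A^*$, which is by definition a right conjugate of $\sigma$. For the converse, suppose $\rho$ is a right conjugate of $\sigma$, say $\sigma(a)w=w\rho(a)$ for all $a$; iterating gives $\sigma(a)^kw=w\rho(a)^k$, so $w$ too is a prefix of every $\sigma(a)^\omega$. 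If $|w|\le|P|$, then $w$ and $P$ are comparable prefixes of one and the same infinite word, whence $w$ is a prefix of $P$ and we are done. It remains to exclude $|w|>|P|$. In that case $P$ is a prefix of $w$; write $w=Pz$ with $z\ne\emptyw$. From the displayed identity with $b=\pi(a)$ we obtain $\sigma(a)P=P\,\bar\psi_u(\pi(a))$; substituting into $\sigma(a)w=w\rho(a)$ and cancelling $P$ on the left gives $\bar\psi_u(\pi(a))\,z=z\,\rho(a)$ for all $a$. Iterating this relation as before, $z$ is a common prefix of $\bar\psi_u(c)^\omega$ for every $c\in A$. But an immediate induction on $|u|$, based on the fact that $\bar\psi_c(d)$ begins with the letter $d$ for all $c,d\in A$, shows that $\bar\psi_u(c)$ always begins with $c$; hence, for $|A|\ge2$ and $c_1\ne c_2$, the words $\bar\psi_u(c_1)^\omega$ and $\bar\psi_u(c_2)^\omega$ begin with distinct letters and have only $\emptyw$ as a common prefix, forcing $z=\emptyw$, a contradiction. (The case $|A|=1$ is trivial, since then $\std(A)=\{\id\}$.)

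The crux of the argument, and the only step that is not mechanical, is this last one: \emph{a priori} a conjugating word could be arbitrarily longer than $\pal(u)$, and the fact that it cannot rests on the small but decisive observation that $\bar\psi_u$ preserves the initial letter of each letter-image, so that any overhang $z$ past $\pal(u)$ is forced to collapse to the empty word. As a byproduct, the map $w\mapsto w^{-1}\sigma w$ from the prefixes of $\pal(u)$ onto the right conjugates of $\sigma$ is seen to be injective as well (two prefixes $w_1\prec w_2$ giving the same $\rho$ would make $w_1^{-1}w_2$ commute with every $\rho(a)$, impossible since $\rho(ab)\ne\rho(ba)$, $\sigma$ being injective), so one recovers Richomme's count of $|\pal(u)|+1$ morphisms in the conjugacy class.
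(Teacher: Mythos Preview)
Your proof is correct. The approach differs from the paper's in its packaging of the key bound $|w|\le|\pal(u)|$. The paper relies on \cref{l:pal-gcp}, namely $\pal(u)=\gcp(\psi_u(ab),\psi_u(ba))$: from $\sigma(ab)w=w\rho(ab)$ one reads off that the length-$(|\pal(u)|+1)$ prefix of $w$ would be a common prefix of $\sigma(ab)$ and $\sigma(ba)$, contradicting that $\gcp$. You instead work with the purely periodic words $\sigma(a)^\omega$, using the identity $\psi_u(b)P=P\bar\psi_u(b)$ (a direct consequence of \eqref{J} and \eqref{J'}) together with the observation that $\bar\psi_u(c)$ always begins with $c$; this forces any overhang $z$ past $\pal(u)$ to be a common prefix of words with distinct first letters, hence empty. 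The two arguments are morally the same induction on $|u|$ dressed differently: your route avoids invoking \cref{l:pal-gcp} and is entirely self-contained from Justin's formulas, at the cost of a few more lines; the paper's route is shorter once that lemma is in hand. One minor remark: in your displayed identity you already have $\psi_u(b)P=P\bar\psi_u(b)$, so the appeal to the definition of palindromic closure to see that $P$ is a prefix of $\psi_u(b)P$ is redundant.
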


\begin{proof}
    It suffices to prove the result for $\sigma=\psi_u$. Let $w$ be a prefix of $\pal(u)$. By \cref{l:pal-gcp}, $w$ and $\sigma(a)w$ are common prefixes of $\sigma(aba)$, hence $w$ is a prefix of $\sigma(a)w$. In particular, $\sigma = w\rho w^{-1}$ for some morphism $\rho$.

    Conversely, let $\rho$ be a right conjugate of $\sigma$ with $w$ such that $\rho = w^{-1}\sigma w$. Then, for every $a\neq b\in A$,
    \begin{equation*}
        \sigma(ab)w = \sigma(a)w\rho(b) = w\rho(ab).
    \end{equation*}
    Moreover $|w|\leq |\pal(u)|$, otherwise the prefix of $w$ of length $|\pal(u)|+1$ would be a common prefix of $\sigma(ab)$ and $\sigma(ba)$, contradicting the maximality of $\pal(u) = \gcp(\sigma(ab),\sigma(ba))$ from \cref{l:pal-gcp}. Hence $w$ is a prefix of $u$, as required.
\end{proof}

We next give two corollaries of \cref{p:rcp}. In the first one, we express the conjugacy index in terms of both the $\gcp$ operator and its dual, the \emph{greatest common suffix} operator $\gcs$. 
\begin{corollary}\label{c:index}
   Let $\sigma$ be an episturmian morphism. For every $a\neq b\in A$, the conjugacy index of $\sigma$ satisfies
    \begin{align*}
        \ind(\sigma) &= \frac{\Vert\sigma\Vert - |A|}{|A|-1} - |\gcp(\sigma(ab),\sigma(ba))|\\
                     &=|\gcs(\sigma(ab),\sigma(ba))|.
        \end{align*}
\end{corollary}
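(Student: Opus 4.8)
The plan is to reduce everything to the standard representative of the conjugacy class and then invoke \cref{p:rcp} together with \cref{l:pal-gcp}. Write $\sigma$ in the form $\sigma = w^{-1}\tau w$ where $\tau = \psi_u\circ\pi\in\std(A)$ is the unique standard morphism in the conjugacy class of $\sigma$ (Theorem \ref{t:richomme}) and $w$ is the unique word with $|w| = \ind(\sigma)$; by \cref{p:rcp}, $w$ is a prefix of $\pal(u)$. Fix two distinct letters $a,b\in A$ and set $a' = \pi(a)$, $b' = \pi(b)$, which are again distinct. The first step is to record the commutation identity $\tau(cd)w = w\sigma(cd)$ for all letters $c,d$, obtained by applying the conjugacy relation $\tau(c)w = w\sigma(c)$ twice. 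From this, $\sigma(ab)$ and $\sigma(ba)$ are exactly the words obtained from $\tau(ab) = \psi_u(a'b')$ and $\tau(ba) = \psi_u(b'a')$ by deleting the common prefix $w$ on the left.

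Now I compute the greatest common prefix. By \cref{l:pal-gcp}, $\gcp(\psi_u(a'b'),\psi_u(b'a')) = \pal(u)$, and since $w$ is a prefix of $\pal(u)$ we get $\gcp(\sigma(ab),\sigma(ba)) = w^{-1}\pal(u)$, a word of length $|\pal(u)| - |w|$. Using the length formula \eqref{length-pal} in the form $|\pal(u)| = \frac{\Vert\psi_u\Vert - 1}{|A|-1} - 1 = \frac{\Vert\psi_u\Vert - |A|}{|A|-1}$, and noting that conjugate morphisms have the same norm (each $\sigma(c)$ is a conjugate of $\tau(c)$, hence of equal length), we obtain $\Vert\sigma\Vert = \Vert\tau\Vert = \Vert\psi_u\Vert$ and therefore
\begin{equation*}
    |\gcp(\sigma(ab),\sigma(ba))| = \frac{\Vert\sigma\Vert - |A|}{|A|-1} - |w| = \frac{\Vert\sigma\Vert - |A|}{|A|-1} - \ind(\sigma),
\end{equation*}
which rearranges to the first displayed equality of the corollary. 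In particular the right-hand side is independent of the choice of $a\neq b$.

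For the second equality, the cleanest route is a reversal (mirror) argument. Let $\tilde v$ denote the reversal of a word $v$ and, for a morphism $\mu$, let $\tilde\mu$ be the morphism $c\mapsto \widetilde{\mu(c)}$. One checks that $\tilde\mu$ is again episturmian with $\Vert\tilde\mu\Vert = \Vert\mu\Vert$, that $\gcs(\mu(ab),\mu(ba)) = \widetilde{\gcp(\tilde\mu(ba),\tilde\mu(ab))}$, and — this is the point where a small computation is needed — that $\ind(\tilde\sigma) = \frac{\Vert\sigma\Vert - |A|}{|A|-1} - \ind(\sigma)$, reflecting the fact that reversal turns a right conjugate into a left conjugate and hence maps the standard representative to the \emph{anti-standard} one at the opposite end of the conjugacy class. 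Granting this, applying the first equality to $\tilde\sigma$ gives $|\gcp(\tilde\sigma(ab),\tilde\sigma(ba))| = \frac{\Vert\sigma\Vert-|A|}{|A|-1} - \ind(\tilde\sigma) = \ind(\sigma)$, and since $|\gcs(\sigma(ab),\sigma(ba))| = |\gcp(\tilde\sigma(ba),\tilde\sigma(ab))|$, which equals $|\gcp(\tilde\sigma(ab),\tilde\sigma(ba))|$ by the independence of the choice of ordered pair just established, the second equality follows.

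The main obstacle is the identity $\ind(\tilde\sigma) + \ind(\sigma) = \frac{\Vert\sigma\Vert-|A|}{|A|-1}$: everything else is bookkeeping with \cref{l:pal-gcp}, \cref{p:rcp} and \eqref{length-pal}. An alternative that avoids introducing $\tilde\sigma$ altogether is to argue directly that $\gcs(\sigma(ab),\sigma(ba))$ has length $|w| = \ind(\sigma)$: from $\tau(ab)w = w\sigma(ab)$ and $\tau(ba)w = w\sigma(ba)$, together with the fact from \cref{l:pal-gcp} that $\tau(ab)$ and $\tau(ba)$ share the prefix $\pal(u)$ and differ immediately afterward (so their longest common \emph{suffix} is empty, using that $\psi_u(a'b')$ and $\psi_u(b'a')$ end in $a'$ and $b'$ respectively once one peels off equal-length blocks — this needs the description of $\psi_u$ on the last letters), one deduces that the longest common suffix of $\sigma(ab) = w^{-1}\tau(ab)w$ and $\sigma(ba) = w^{-1}\tau(ba)w$ is precisely the suffix $w$ contributed by the right conjugating word. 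I would present whichever of these two arguments turns out to be shorter once the details of the "differ immediately after $\pal(u)$, agree nowhere at the end" claim are pinned down from \cref{l:pal-gcp}.
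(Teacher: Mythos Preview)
Your argument for the first equality is correct and essentially identical to the paper's (the paper drops the permutation by saying ``it suffices to treat the case $\sigma = w^{-1}\psi_u w$'', but your explicit handling of $\pi$ is fine).

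For the second equality, the paper takes your \emph{alternative} direct route, and it is a one-liner once you supply the fact you were hesitating over: for every $u\in A^*$ and $c\in A$, the word $\psi_u(c)$ ends in the letter $c$. This is immediate by induction on $|u|$ from the very definition of $\psi_a$. Consequently $\psi_u(a'b')$ and $\psi_u(b'a')$ end in the distinct letters $b'$ and $a'$, so their greatest common suffix is empty. From $\tau(ab)w = w\sigma(ab)$ and $\tau(ba)w = w\sigma(ba)$, together with $|\sigma(ab)| = |\psi_u(a'b')| \geq |\pal(u)|+1 > |w|$, one reads off that $b'w$ is a suffix of $\sigma(ab)$ and $a'w$ a suffix of $\sigma(ba)$; hence $\gcs(\sigma(ab),\sigma(ba)) = w$ exactly, giving $|\gcs(\sigma(ab),\sigma(ba))| = |w| = \ind(\sigma)$.

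Your reversal detour is not wrong --- the identity $\ind(\tilde\sigma)+\ind(\sigma) = |\pal(u)|$ you flagged as the ``main obstacle'' does hold, via $\widetilde{\psi_u} = \bar\psi_u$ and the fact that $\tilde w$ is a suffix of the palindrome $\pal(u)$, so that $\tilde\sigma = (w')^{-1}\psi_u w'$ with $w' = \pal(u)\tilde w^{-1}$ --- but it is considerably more work than the direct argument, which you already had in hand modulo the trivial observation about last letters.
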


\begin{proof}
    It suffices to treat the case where $\sigma = w^{-1}\psi_uw$ with $u\in A^*$ and $|w|=\ind(\sigma)$. Recall that by \cref{p:rcp}, $w$ is a prefix of $\pal(u)$. Fix two distinct letters $a\neq b\in A$. By \cref{l:pal-gcp}, $\pal(u)a$ is a prefix of $\psi_u(ab)$, and therefore $w^{-1}\pal(u)a$ is a prefix of
    \begin{equation*}
        w^{-1}\psi_u(ab)w = (w^{-1}\psi_uw)(ab) = \sigma(ab).
    \end{equation*}
    It follows that $w^{-1}\pal(u) = \gcp(\sigma(ab),\sigma(ba))$ and using \eqref{length-pal},
    \begin{equation*}
        |w^{-1}\pal(u)| = |\pal(u)|-|w| =  \frac{\Vert\sigma\Vert - |A|}{|A|-1} - \ind(\sigma),
    \end{equation*}
    which gives the first equality.

    For the second equality, observe that $\sigma(ab)$ is a suffix of $bw$ or vice-versa. Since $|\sigma(ab)|\geq |\pal(u)| \geq |w|$ we conclude that the latter alternative holds.
\end{proof}

\begin{corollary}\label{c:gcs-gcp}
    Let $u\in A^*$. For every conjugate $\sigma$ of $\psi_u$ and every $a\neq b\in A$,
    \begin{equation*}
        \pal(u) = \gcs(\sigma(ab),\sigma(ba))\gcp(\sigma(ab),\sigma(ba)).
    \end{equation*}
\end{corollary}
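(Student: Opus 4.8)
The plan is to reduce $\sigma$ to a right conjugate of the standard morphism $\psi_u$ and then read off both the greatest common prefix and the greatest common suffix from computations already carried out for \cref{c:index}. Let $\sigma$ be a conjugate of $\psi_u$. Since $\psi_u$ is standard and each episturmian conjugacy class contains a unique standard morphism (\cref{t:richomme}), $\sigma$ is actually a \emph{right} conjugate of $\psi_u$; by \cref{p:rcp} we may therefore write $\sigma = w^{-1}\psi_u w$ for some prefix $w$ of $\pal(u)$, and then $|w| = \ind(\sigma)$ by \cref{d:index}. Fix distinct letters $a\neq b\in A$. It is enough to prove the two identities
\[
    \gcp(\sigma(ab),\sigma(ba)) = w^{-1}\pal(u), \qquad \gcs(\sigma(ab),\sigma(ba)) = w,
\]
since concatenating them yields $\gcs(\sigma(ab),\sigma(ba))\,\gcp(\sigma(ab),\sigma(ba)) = w\,w^{-1}\pal(u) = \pal(u)$.

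For the first identity I would repeat the argument from the proof of \cref{c:index}. By \cref{l:pal-gcp}, $\pal(u)a$ is a prefix of $\psi_u(ab)$; since $w$ is a prefix of $\pal(u)$, the word $w^{-1}\psi_u(ab)$ is well defined and has $w^{-1}\pal(u)a$ as a prefix, hence so does $\sigma(ab) = w^{-1}\psi_u(ab)\,w$. Symmetrically $w^{-1}\pal(u)b$ is a prefix of $\sigma(ba)$. As $a\neq b$, this identifies $w^{-1}\pal(u)$ as $\gcp(\sigma(ab),\sigma(ba))$.

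For the second identity, observe that $w^{-1}\psi_u(ab)$ being a word (as just noted), the equality $\sigma(ab) = (w^{-1}\psi_u(ab))\,w$ exhibits $w$ as a suffix of $\sigma(ab)$; likewise $\sigma(ba) = (w^{-1}\psi_u(ba))\,w$ exhibits $w$ as a suffix of $\sigma(ba)$. Thus $w$ is a common suffix of $\sigma(ab)$ and $\sigma(ba)$, hence a suffix of $\gcs(\sigma(ab),\sigma(ba))$; but $|\gcs(\sigma(ab),\sigma(ba))| = \ind(\sigma) = |w|$ by \cref{c:index}, so $w = \gcs(\sigma(ab),\sigma(ba))$, which finishes the argument.

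The only steps that are not pure bookkeeping are the two structural reductions at the start: that a conjugate of the standard morphism $\psi_u$ must be a right conjugate of it (so that \cref{p:rcp} applies and $w$ is a prefix of $\pal(u)$), and that this $w$ has length $\ind(\sigma)$. Both follow from \cref{t:richomme} and \cref{d:index}, and once they are in place the rest is the same manipulation of $\gcp$ and $\gcs$ used in \cref{c:index}, so I do not anticipate any real obstacle.
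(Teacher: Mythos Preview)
Your proof is correct and follows essentially the same approach as the paper's: both write $\sigma = w^{-1}\psi_u w$ with $w$ a prefix of $\pal(u)$ via \cref{p:rcp}, identify $\gcs(\sigma(ab),\sigma(ba)) = w$ using the length equality from \cref{c:index} together with the observation that $w$ is a suffix of $\sigma(ab)$, and identify $\gcp(\sigma(ab),\sigma(ba)) = w^{-1}\pal(u)$ by the prefix computation from \cref{l:pal-gcp} (the paper phrases this last step as ``a dual argument''). Your additional remark that any conjugate of $\psi_u$ must be a \emph{right} conjugate, via the uniqueness in \cref{t:richomme}, makes explicit a point the paper leaves implicit.
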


\begin{proof}
    Let $w$ be the prefix of $\pal(u)$ such that $\sigma = w^{-1}\psi_uw$ (cf.~\cref{p:rcp}). Observe that for every $x\in A^*$, either $\sigma(x)$ is a suffix of $w$ or vice-versa. But by \cref{c:index}, we have that
    \begin{equation*}
        |\gcs(\sigma(ab),\sigma(ba))| = |w|,
    \end{equation*}
    hence $\gcs(\sigma(ab),\sigma(ba))=w$. A dual argument shows that $\gcp(\sigma(ab),\sigma(ba)) = w^{-1}\pal(u)$.
\end{proof}

We introduce with the next lemma the notion of minimal letters, which is well-defined for all episturmian morphisms which are not permutations.

\begin{lemma}\label{l:minletter}
    Let $\sigma$ be an episturmian morphism over $A$ which is not a permutation. There exists a unique letter $a_{\min}\in A$ such that $|\sigma(a_{\min})|<|\sigma(a)|$ for every $a\in A$, $a\neq a_{\min}$. Moreover if $\sigma=\psi_u$ for $u\in A^+$, then $a_{\min}$ is the last letter of $u$.
\end{lemma}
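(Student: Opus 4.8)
The plan is to reduce to the special case $\sigma = \psi_v$ with $v \in (A \cup \bar A)^*$. Using the description $\epi(A) = \{\psi_v \circ \pi \mid v \in (A \cup \bar A)^*,\ \pi \in \perm(A)\}$ recalled above, write $\sigma = \psi_v \circ \pi$, so that $|\sigma(b)| = |\psi_v(\pi(b))|$ for every $b \in A$. Since $\pi$ is a bijection of $A$, the morphism $\sigma$ admits a unique letter with strictly shortest image if and only if $\psi_v$ does, and in that case the minimizer for $\sigma$ is $\pi^{-1}$ applied to the one for $\psi_v$. Moreover, if $\sigma$ is not a permutation then $v \neq \emptyw$ (otherwise $\psi_v = \id$ and $\sigma = \pi \in \perm(A)$), so we may write $v = v's$ with $v' \in (A \cup \bar A)^*$ and $s \in A \cup \bar A$; let $c \in A$ be the letter underlying $s$, i.e.\ $s = c$ or $s = \bar c$.

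The core of the argument is a short computation based on the factorization $\psi_v = \psi_{v'} \circ \psi_s$ together with the explicit definitions of $\psi_c$ and $\bar\psi_c$. Fix $a \in A$ with $a \neq c$. If $s = c$, then $\psi_v(c) = \psi_{v'}(c)$ while $\psi_v(a) = \psi_{v'}(ca) = \psi_{v'}(c)\,\psi_{v'}(a)$; if $s = \bar c$, then again $\psi_v(c) = \psi_{v'}(c)$ while $\psi_v(a) = \psi_{v'}(ac) = \psi_{v'}(a)\,\psi_{v'}(c)$. In either case $\psi_v(c)$ is a prefix or a suffix of $\psi_v(a)$, and since episturmian morphisms are non-erasing we have $\psi_{v'}(a) \neq \emptyw$; hence $|\psi_v(a)| = |\psi_v(c)| + |\psi_{v'}(a)| > |\psi_v(c)|$. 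As this holds for every $a \in A \setminus \{c\}$, the letter $c$ is the unique element of $A$ with $|\psi_v(c)| < |\psi_v(a)|$ for all $a \neq c$, and by the first paragraph $a_{\min} = \pi^{-1}(c)$ is the desired unique minimal letter of $\sigma$.

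The final assertion then falls out immediately: when $\sigma = \psi_u$ with $u \in A^+$ we take $v = u$ and $\pi = \id$, so that $s$ is the last letter of $u$, $c = s$, and $a_{\min} = \pi^{-1}(c) = c$. I do not expect a genuine obstacle in filling in these details; the only points deserving a moment's care are that the passage through $\pi$ preserves both existence and uniqueness of the minimizer (it does, being a bijective relabelling of $A$) and that the degenerate subcase $v' = \emptyw$ is harmless (then $\psi_{v'} = \id$, which is still non-erasing, and the inequality reads $|\psi_v(a)| = 2 > 1 = |\psi_v(c)|$). A less direct route would use Justin's formula \eqref{J}, which specializes to $\pal(u'c) = \psi_{u'}(c)\,\pal(u')$ and hence expresses $|\psi_u(c)|$ through lengths of iterated palindromic closures; but the factorization argument above is cleaner.
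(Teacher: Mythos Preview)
Your argument is correct. The core computation---peeling off the last generator and observing that $\psi_v(c)$ is a proper prefix or suffix of every other $\psi_v(a)$---is the same as the paper's, but the reduction step differs. The paper first invokes conjugacy invariance of the minimal-letter property together with Richomme's theorem (\cref{t:richomme}) to replace $\sigma$ by its unique standard conjugate, and then by a permutation to reach the form $\psi_u$ with $u\in A^+$; it therefore never needs to consider barred letters. You instead stay with the general representation $\sigma=\psi_v\circ\pi$ with $v\in(A\cup\bar A)^*$, handle the two cases $s\in\{c,\bar c\}$ directly, and pull the minimizer back through $\pi$. Your route is a little more self-contained, since it avoids appealing to \cref{t:richomme}; the paper's route has the mild advantage that it only ever manipulates the standard generators $\psi_a$ and yields as a byproduct the slightly sharper statement that $\psi_u(a_{\min})$ is a proper \emph{prefix} of every other $\psi_u(b)$ (rather than merely prefix-or-suffix), though this refinement is not used later.
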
 

\begin{proof}
    Observe that the property  of the first claim is preversed by conjugacy. Since, by \cref{t:richomme}, every episturmian morphism has a (unique) standard conjugate, we may assume that $\sigma$ is standard. Up to precomposing by an appropriate permutation, we may in fact suppose that $\sigma=\psi_u$ where $u\in A^+$. Write $u=va$ where $v\in A^*$, $a\in A$. For $b\in A\setminus\{a\}$ we have
    \begin{equation*}
        \psi_u(b) = \psi_v(\psi_a(b)) = \psi_v(ab) = \psi_v(a)\psi_v(b) = \psi_v(\psi_a(a))\psi_v(b) = \psi_u(a)\psi_v(b).
    \end{equation*}
    It follows that $\psi_u(a)$ is a proper prefix of every other $\psi_u(b)$, hence the result holds for $a_{\min} = a$.
\end{proof}

We call $a_{\min}$ the \emph{minimal letter} of $\sigma$. All episturmian morphisms in the same (non-trivial) conjugacy class share the same minimal letter. The following proposition highlights some of its key properties.

\begin{proposition}
    \label{p:minletter-epi}
    Let $\sigma$ be an episturmian morphism which is not a permutation. Then its minimal letter $a_{\min}$ satisfies the following three properties:
    \begin{enumerate}
        \item $\sigma(A)\subseteq A^*\sigma(a_{\min})\iff\ind(\sigma)\geq |\sigma(a_{\min})|$.
            \label{i:minletter-epi-1}
        \item $\sigma(A)\subseteq \sigma(a_{\min})A^*\iff\ind(\sigma)\leq\frac{\Vert\sigma\Vert-|A|}{|A|-1}-|\sigma(a_{\min})|$.
            \label{i:minletter-epi-2}
        \item $2|\sigma(a_{\min})| \leq \frac{\Vert\sigma\Vert - 1}{|A|-1}$.
            \label{i:minletter-epi-3}
    \end{enumerate}
\end{proposition}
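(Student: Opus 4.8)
The plan is to settle the two equivalences by a short uniform prefix/suffix argument resting on \cref{c:index}, and to reduce the inequality $2|\sigma(a_{\min})|\leq\frac{\Vert\sigma\Vert-1}{|A|-1}$ to an auxiliary length bound for the morphisms $\psi_v$, proved by induction. Throughout write $\ell=|\sigma(a_{\min})|$ and $N=\frac{\Vert\sigma\Vert-|A|}{|A|-1}$. By \cref{l:minletter} the letter $a_{\min}$ is well defined with $\ell<|\sigma(a)|$ for every $a\neq a_{\min}$, and such $a$ exist because $\sigma$, not being a permutation, satisfies $|A|\geq2$. The only external input needed is \cref{c:index}, which gives $|\gcp(\sigma(ab),\sigma(ba))|=N-\ind(\sigma)$ and $|\gcs(\sigma(ab),\sigma(ba))|=\ind(\sigma)$ for every pair of distinct letters $a,b\in A$.

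The heart of the first two parts is the elementary fact that, for words $x,y$ with $|y|<|x|$, the word $y$ is a prefix of $x$ if and only if $|\gcp(xy,yx)|\geq|y|$, and symmetrically $y$ is a suffix of $x$ if and only if $|\gcs(xy,yx)|\geq|y|$. Indeed, if $y$ is a prefix of $x=yt$ then $xy=yty$ and $yx=yyt$ both begin with $y$; and if $y$ is not a prefix of $x$ then $p=\gcp(x,y)$ is a proper prefix of $y$, say $|p|=m<|y|<|x|$, so $x$ and $y$ first disagree at position $m$, whence $xy$ and $yx$ also first disagree there (position $m$ falls inside the leading $x$ of $xy$ and inside the leading $y$ of $yx$), giving $\gcp(xy,yx)=p$; the suffix statement is symmetric. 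Applying this with $y=\sigma(a_{\min})$ and $x=\sigma(a)$ for an arbitrary $a\neq a_{\min}$ shows that \enquote{$\sigma(a_{\min})$ is a prefix of $\sigma(a)$} is equivalent to $|\gcp(\sigma(a_{\min}a),\sigma(aa_{\min}))|\geq\ell$, i.e.\ to $\ind(\sigma)\leq N-\ell$ by \cref{c:index}, while \enquote{$\sigma(a_{\min})$ is a suffix of $\sigma(a)$} is equivalent to $\ind(\sigma)\geq\ell$. As these conditions are independent of $a$, and $\sigma(a_{\min})$ is trivially both a prefix and a suffix of itself, one obtains $\sigma(A)\subseteq\sigma(a_{\min})A^*\iff\ind(\sigma)\leq N-\ell$ and $\sigma(A)\subseteq A^*\sigma(a_{\min})\iff\ind(\sigma)\geq\ell$, which are exactly the first two claims.

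For the third claim, note that $\ell=|\sigma(a_{\min})|$ and $\frac{\Vert\sigma\Vert-1}{|A|-1}$ are determined by the multiset $\{|\sigma(a)|:a\in A\}$, which is preserved by conjugacy and by composition with a permutation on either side; hence, by \cref{t:richomme} together with the fact that standard morphisms have the form $\psi_u\circ\pi$, we may assume $\sigma=\psi_u$ with $u\in A^+$ (nonempty since $\sigma$ is not a permutation). Writing $u=va_0$, \cref{l:minletter} gives $a_{\min}=a_0$ and $\ell=|\psi_u(a_0)|=|\psi_v(a_0)|$, while Justin's formula \eqref{J} gives $\pal(u)=\psi_v(\pal(a_0))\pal(v)=\psi_v(a_0)\pal(v)$, so $|\pal(u)|=\ell+|\pal(v)|$; together with \eqref{length-pal} this turns the desired inequality $2\ell\leq\frac{\Vert\sigma\Vert-1}{|A|-1}=|\pal(u)|+1$ into $|\psi_v(a_0)|\leq|\pal(v)|+1$. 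I will obtain this from the general statement that $|\psi_w(c)|\leq|\pal(w)|+1$ for all $w\in A^*$, $c\in A$, proved by induction on $|w|$: the case $w=\emptyw$ is immediate, and for $w=w'd$ one splits on whether $c=d$ — then $\psi_w(c)=\psi_{w'}(d)$ and $|\psi_{w'}(d)|\leq|\pal(w')|+1\leq|\pal(w'd)|+1$ since $\pal(w')$ is a prefix of $\pal(w'd)=(\pal(w')d)^{(+)}$ — or $c\neq d$ — then $\psi_w(c)=\psi_{w'}(dc)=\psi_{w'}(d)\psi_{w'}(c)$, and the induction hypothesis for $(w',c)$ together with $\pal(w'd)=\psi_{w'}(d)\pal(w')$ (again \eqref{J}) yields $|\psi_w(c)|=|\psi_{w'}(d)|+|\psi_{w'}(c)|\leq|\psi_{w'}(d)|+|\pal(w')|+1=|\pal(w'd)|+1$.

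The prefix/suffix dichotomy and the conjugacy normalization are routine; the one point requiring care is the inductive claim used for the third part. A direct attempt to read $|\psi_v(c)|\leq|\pal(v)|+1$ off the length formula \eqref{length-pal} is circular, since it reduces to $\Vert\psi_{vc}\Vert\leq2\Vert\psi_v\Vert-1$, which is the very same inequality rewritten; the induction, which uses Justin's formula to track how $\pal$ grows, appears to be genuinely needed. Everything else is bookkeeping.
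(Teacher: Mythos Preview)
Your proof is correct and follows essentially the same approach as the paper. For parts \ref{i:minletter-epi-1}--\ref{i:minletter-epi-2} you package the argument as the general fact that $y$ is a prefix (resp.\ suffix) of $x$ iff $|\gcp(xy,yx)|\geq|y|$ (resp.\ $|\gcs(xy,yx)|\geq|y|$), while the paper argues directly with the conjugating word $w$ realizing $\sigma=w^{-1}\psi_u w$; these are two phrasings of the same use of \cref{c:index}. For part \ref{i:minletter-epi-3} both you and the paper reduce to the auxiliary inequality $|\psi_v(c)|\leq|\pal(v)|+1$ and prove it by induction, the only cosmetic difference being that you invoke Justin's formula \eqref{J} while the paper uses \eqref{J'}.
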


\begin{proof}
    \ref{i:minletter-epi-1}. Let $\sigma= w^{-1}\psi_u w$ with $u\in A^+$ and $w$ the prefix of length $\ind(\sigma)$ of $\pal(u)$. We assume that $\ind(\sigma)=|w|\geq|\sigma(a_{\min})|$, and take $b\neq a_{\min}$ in $A$. By \cref{c:index}, $w$ is the greatest common suffix of $\sigma(ba_{\min})$ and $\sigma(a_{\min}b)$, hence $\sigma(a_{\min})$ is a suffix of $w$ and in turn a suffix of $\sigma(a_{\min}b)$. Since $|\sigma(b)|>|\sigma(a_{\min})|$, we conclude that $\sigma(a_{\min})$ is a suffix of $\sigma(b)$. Conversely assume that $\ind(\sigma) = |w|<|\sigma(a_{\min})|$. Then $\sigma(a_{\min})$ cannot be a suffix $\sigma(a_{\min}b)$, because this would contradict the maximality of $w$ as a common suffix of $\sigma(ba_{\min})$ and $\sigma(a_{\min}b)$.

    \ref{i:minletter-epi-2}. The proof is dual.

    \ref{i:minletter-epi-3}. For this part of the statement, we may assume that $\sigma = \psi_u$ for some $u\in A^+$. First we claim that for every $v\in A^*$:
    \begin{equation}\label{eq:ineq-max}
        \max\{|\psi_v(a)| \mathrel{;} a\in A\} \leq |\pal(v)|+1.
    \end{equation}
    We proceed by induction on $|v|$. When $v=\emptyw$, this is trivial. Assume that the result holds for $v\in A^*$ and let $a\in A$. Using Justin's dual formula \eqref{J'} and the induction hypothesis, we find
    \begin{align*}
        |\pal(va)|+1 &= |\pal(v)|+1+|\bar{\psi}_v(a)| = |\pal(v)|+1+|\psi_v(a)| \\
                     &\geq |\psi_v(b)|+|\psi_v(a)| = |\psi_v(ab)| \geq |\psi_{va}(b)|.
    \end{align*}
    This proves the claim.

    We conclude the proof of \ref{i:minletter-epi-3} by reducing it to the inequality \eqref{eq:ineq-max}, where $v\in A^*$ is such  that $u = va$, $a\in A$. According to  \cref{l:minletter}, $a = a_{\min}$. Moreover, we have $\psi_{va}(a)=\psi_v(a)$. Combining \eqref{eq:ineq-max} and \eqref{length-pal}, we get
    \begin{equation*}
        \frac{\Vert\psi_{va}\Vert - 1}{|A|-1} = |\psi_v(a)| + \frac{\Vert\psi_v\Vert-1}{|A|-1} = |\psi_v(a)| + |\pal(v)|+1 \geq 2|\psi_v(a)|.\qedhere
    \end{equation*}
\end{proof}

\begin{corollary}\label{c:min}
    Let $\sigma$ be an episturmian morphism over $A$ which is not a permutation. Then its minimal letter $a_{\min}$ is such that either  $\sigma(A)\subseteq A^*\sigma(a_{\min})$, or $\sigma(A)\subseteq \sigma(a_{\min})A^*$.
\end{corollary}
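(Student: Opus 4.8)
The plan is to read this off directly from the three statements of \cref{p:minletter-epi} by a short counting argument, so there is no genuinely new content to prove. First I would introduce the abbreviation $n = \frac{\Vert\sigma\Vert - |A|}{|A|-1}$. Applying \eqref{length-pal} to the standard representative of the conjugacy class of $\sigma$ (equivalently, invoking Richomme's cardinality formula, noting that $\Vert\cdot\Vert$ is constant on conjugacy classes), one has $n+1 = \frac{\Vert\sigma\Vert-1}{|A|-1} = |\pal(u)|+1$, which is in particular a positive integer; hence $n$ is a nonnegative integer, and of course $\ind(\sigma)$ and $|\sigma(a_{\min})|$ are integers as well. Note also that $a_{\min}$ itself is well defined here since $\sigma$ is not a permutation (\cref{l:minletter}).

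Then I would argue by contradiction. Suppose that neither $\sigma(A)\subseteq A^*\sigma(a_{\min})$ nor $\sigma(A)\subseteq\sigma(a_{\min})A^*$ holds. By \cref{p:minletter-epi}\,\ref{i:minletter-epi-1}, the failure of the first inclusion gives $\ind(\sigma) < |\sigma(a_{\min})|$; by \cref{p:minletter-epi}\,\ref{i:minletter-epi-2}, the failure of the second gives $\ind(\sigma) > n - |\sigma(a_{\min})|$. Since all quantities are integers, these strict inequalities sharpen to $\ind(\sigma) \leq |\sigma(a_{\min})| - 1$ and $\ind(\sigma) \geq n - |\sigma(a_{\min})| + 1$. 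Chaining them yields $n - |\sigma(a_{\min})| + 1 \leq |\sigma(a_{\min})| - 1$, that is $2|\sigma(a_{\min})| \geq n + 2$, contradicting \cref{p:minletter-epi}\,\ref{i:minletter-epi-3}, which reads $2|\sigma(a_{\min})| \leq n+1$. This finishes the proof.

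The argument has essentially no obstacle, since \cref{p:minletter-epi} does all the work; the only point worth flagging in the write-up is the integrality of $n$, which is exactly what allows the two strict inequalities coming from \ref{i:minletter-epi-1} and \ref{i:minletter-epi-2} to be promoted by one unit and thereby collide with \ref{i:minletter-epi-3}. (Without appealing to integrality one only obtains $n < 2|\sigma(a_{\min})| \leq n+1$, which is still contradictory for the same reason, but less transparently.)
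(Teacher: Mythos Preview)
Your argument is correct and is essentially the same as the paper's: the paper assumes \ref{i:minletter-epi-1} fails, uses integrality to get $\ind(\sigma)\leq |\sigma(a_{\min})|-1$, and then applies \ref{i:minletter-epi-3} to deduce the inequality in \ref{i:minletter-epi-2}; your version is just the contrapositive of this, phrased as a contradiction. The only difference is that you make the integrality step explicit, whereas the paper leaves it implicit.
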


\begin{proof}
    If \ref{i:minletter-epi-1} from \cref{p:minletter-epi} does not hold, then by \ref{i:minletter-epi-3} the conjugacy index of $\sigma$ satisfies
    \begin{equation*}
        \ind(\sigma) \leq |\sigma(a_{\min})|-1 \leq \frac{\Vert\sigma\Vert - 1}{|A|-1} - |\sigma(a_{\min})| - 1 = \frac{\Vert\sigma\Vert - |A|}{|A|-1} - |\sigma(a_{\min})|,
    \end{equation*}
    which means that \ref{i:minletter-epi-2} holds.
\end{proof}

\begin{remark}
    For a standard episturmian morphism, the conjugacy index is $0$ and the greatest common prefix $\gcp(\sigma(ab) \mid a\neq b \in A)$ is a word  corresponding to  an extreme case of the  Fine and Wilf theorem, such as considered in \cite{Castelli1999} for three-letter alphabets and in \cite{Justin2000a} for the general case. These are palindromic prefixes of standard episturmian words. By \cref{c:index}, for a three-letter alphabet, the word $\gcp(\sigma(ab) \mid a\neq b \in A)$ has length $1/2(\Vert\sigma\Vert-3)$ (cf.\ \cite{Castelli1999}, Definition 5.2). 
 \end{remark}

\begin{example}
    Fix an alphabet $A = \{\ltr{a}_1,\cdots,\ltr{a}_d\}$ of size $d\geq 2$, and consider the $d$-bonacci morphism $\sigma=\psi_{\ltr{a}_1}\circ\pi$ from \cref{e:dbo}. Since $\Vert\sigma\Vert = 2|A|-1$, its conjugacy class consists of 2 elements, which are $\sigma$ and $\bar\sigma = \bar\psi_{\ltr{a}_1}\circ\pi$. In this case, the minimal letter $a_{\min} = \ltr{a}_d$ satisfies $|\sigma(\ltr{a}_d)| = 1$, which is exactly half the size of the conjugacy class. In other words, the conditions \ref{i:minletter-epi-1} and \ref{i:minletter-epi-2} from \cref{p:minletter-epi} do not overlap for this conjugacy class.
\end{example}

\begin{example}
    Let $\sigma = \psi_{\ltr{abca}}$. Then straightforward computations show that $|\sigma(a_{\min})| = |\sigma(\ltr{a})| = 7$ and 
    \begin{equation*}
        \frac{\Vert\sigma\Vert -1}{|A|-1} = \frac{31-1}{2} = 15.
    \end{equation*}
    Therefore the inequality in \ref{i:minletter-epi-3} from \cref{p:minletter-epi} is strict in this conjugacy class. Here are the conjugates of $\sigma$ with index 6, 7 and 8:
    \begin{gather*}
        \ltr{a}\mapsto \ltr{aabacab},\ \ltr{b}\mapsto \ltr{aabacababacab},\ \ltr{c}\mapsto\ltr{aabacabacab},\\
        \ltr{a}\mapsto \ltr{abacaba},\ \ltr{b}\mapsto \ltr{abacababacaba},\ \ltr{c}\mapsto\ltr{abacabacaba},\\
        \ltr{a}\mapsto \ltr{bacabaa},\ \ltr{b}\mapsto \ltr{bacababacabaa},\ \ltr{c}\mapsto\ltr{bacabacabaa}.
    \end{gather*}
    We see that the morphism with index 7 is simultaneously the first morphism in the conjugacy class satisfying \ref{i:minletter-epi-1}, and the last one satisfying \ref{i:minletter-epi-2}. In particular these two properties overlap.
\end{example}
 
\section{Rauzy graphs and return words}\label{s:Rauzy}

Rauzy graphs of  episturmian shift spaces are particularly convenient for their study. This is in fact how Arnoux and Rauzy first defined Arnoux--Rauzy words in their seminal paper \cite{Arnoux1991}, which initiated the study of episturmian words. 

In a shift $X$, we define the left and right Rauzy graphs $\Gamma_n$ and $\bar\Gamma_n$ as follows: 
\begin{itemize}
    \item in both cases, vertices are the elements of $\lang(X)\cap A^n$;
    \item each word $u\in\lang(X)\cap A^{n+1}$ gives an edge $u_{[0,n)}\to u_{(0,n]}$, labelled respectively by $u_{0}$ in $\Gamma_n$, and $u_{n}$ in $\bar\Gamma_n$.
\end{itemize}
        
In other words these two graphs have the same underlying shape, but are labeled differently. An example is given in \cref{f:tetrabonacci}.

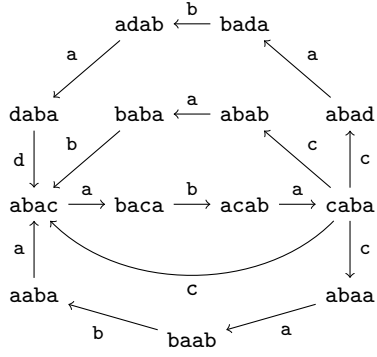
\begin{figure}\centering
\begin{tikzpicture}[xscale=1.4,yscale=1.2,>={to}]
    \node (abac) at (0,0)      {$\ltr{abac}$} ;
    \node (caba) at (3,0)      {$\ltr{caba}$} ;
    \node (baca) at (1,0)      {$\ltr{baca}$} ;
    \node (acab) at (2,0)      {$\ltr{acab}$} ;
    \node (abab) at (2,1)      {$\ltr{abab}$} ;
    \node (baba) at (1,1)      {$\ltr{baba}$} ;
    \node (abaa) at (3,-1)     {$\ltr{abaa}$} ;
    \node (baab) at (1.5,-1.5) {$\ltr{baab}$} ;
    \node (aaba) at (0,-1)     {$\ltr{aaba}$} ;
    \node (abad) at (3,1)      {$\ltr{abad}$} ;
    \node (bada) at (2,2)      {$\ltr{bada}$} ;
    \node (adab) at (1,2)      {$\ltr{adab}$} ;
    \node (daba) at (0,1)      {$\ltr{daba}$} ;
    \draw[->] (abac) edge node [font=\footnotesize,auto] {$\ltr{a}$} (baca);
    \draw[->] (baca) edge node [font=\footnotesize,auto] {$\ltr{b}$} (acab);
    \draw[->] (acab) edge node [font=\footnotesize,auto] {$\ltr{a}$} (caba);
    \draw[->] (caba) edge node [font=\footnotesize,auto] {$\ltr{c}$} (abaa);
    \draw[->] (abaa) edge node [font=\footnotesize,auto] {$\ltr{a}$} (baab);
    \draw[->] (baab) edge node [font=\footnotesize,auto] {$\ltr{b}$} (aaba);
    \draw[->] (aaba) edge node [font=\footnotesize,auto] {$\ltr{a}$} (abac);
    \draw[->] (caba) edge node [font=\footnotesize,auto,swap] {$\ltr{c}$} (abab);
    \draw[->] (abab) edge node [font=\footnotesize,auto,swap] {$\ltr{a}$} (baba);
    \draw[->] (baba) edge node [font=\footnotesize,auto,swap] {$\ltr{b}$} (abac);
    \draw[->] (caba) edge node [font=\footnotesize,auto,swap] {$\ltr{c}$} (abad);
    \draw[->] (abad) edge node [font=\footnotesize,auto,swap] {$\ltr{a}$} (bada);
    \draw[->] (bada) edge node [font=\footnotesize,auto,swap] {$\ltr{b}$} (adab);
    \draw[->] (adab) edge node [font=\footnotesize,auto,swap] {$\ltr{a}$} (daba);
    \draw[->] (daba) edge node [font=\footnotesize,auto,swap] {$\ltr{d}$} (abac);
    \draw[->,bend left=50] (caba) edge node [auto] {$\ltr{c}$} (abac) ;
\end{tikzpicture}
    \caption{Rauzy graph $\Gamma_4$ in the \enquote{tetrabonacci} shift ($d$-bonacci with $d=4$).}
    \label{f:tetrabonacci}
\end{figure} 

In episturmian shift spaces, the shape of the Rauzy graphs are easily understood: excluding the degenerate case where $\lesp_n$ does not exist, they consist of a single (possibly empty) path $\lesp_n\to\risp_n$, called the \emph{inner branch}, together with disjoint paths $\risp_n\to \lesp_n$, called the \emph{outer branches}. Note that when $\lesp_n=\risp_n$, the inner branch is reduced to a single vertex. In a strict episturmian shift, all of the Rauzy graphs have $|A|$ outer branches. See \cref{f:rauzy-shapes} for an illustration of possible shapes of Rauzy graphs in episturmian shifts.

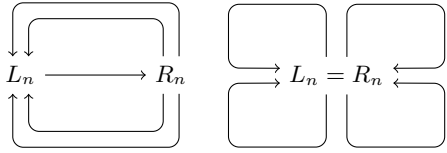
\begin{figure}
    \begin{tabular}{cc}
        \begin{tikzpicture}[>=to]
            \node (L) at (0,0) {$L_n$} ;
            \node (R) at (2,0) {$R_n$} ; 
    
            \draw[->] (L) to (R) ;
            \draw[->,rounded corners] ([xshift=3pt]R.90) to [yshift=6pt] ++(0,.5) to [yshift=6pt,xshift=-6pt] ++(-2,0) to ([xshift=-3pt]L.90) ; 
            \draw[->,rounded corners] ([xshift=-3pt]R.90) to ++(0,.5) to [xshift=6pt] ++(-2,0) to ([xshift=3pt]L.90) ; 
            \draw[->,rounded corners] ([xshift=3pt]R.270) to [yshift=-6pt] ++(0,-.5) to [yshift=-6pt,xshift=-6pt] ++(-2,0) to ([xshift=-3pt]L.270) ; 
            \draw[->,rounded corners] ([xshift=-3pt]R.270) to ++(0,-.5) to [xshift=6pt] ++(-2,0) to ([xshift=3pt]L.270) ; 
        \end{tikzpicture}
        &
        \begin{tikzpicture}[xscale=1.3,yscale=1.4,>={to}]
            \node (L) at (0,0) {$L_n = R_n$} ;
    
            \draw[->,rounded corners] ([xshift=3pt]L.90) to ++(0,.5) to ++(1,0) to [yshift=-.7ex] ++(0,-.5) to ([yshift=2pt]L.0) ; 
            \draw[->,rounded corners] ([xshift=-3pt]L.90) to ++(0,.5) to ++(-1,0) to [yshift=-.7ex] ++(0,-.5) to ([yshift=2pt]L.180) ; 
            \draw[->,rounded corners] ([xshift=3pt]L.270) to ++(0,-.5) to ++(1,0) to [yshift=.7ex] ++(0,.5) to ([yshift=-2pt]L.0) ; 
            \draw[->,rounded corners] ([xshift=-3pt]L.270) to ++(0,-.5) to ++(-1,0) to [yshift=.7ex] ++(0,.5) to ([yshift=-2pt]L.180) ; 
        \end{tikzpicture}
    \end{tabular}
    \caption{Possible shapes of Rauzy graphs in a strict episturmian shift on 4 letters.}
    \label{f:rauzy-shapes}
\end{figure}

\begin{definition}
    We say that $w\in\lang(X)$ is an \emph{inner word} if it lies on the inner branch of $\Gamma_n$ ($n=|w|$), including if $w = \lesp_n$ or $\risp_n$. Otherwise, we say that $w$ is an \emph{outer word}.
\end{definition}

We will denote by $U_n$ the label of the inner branch in $\Gamma_n$ and by $\bar U_n$ its label in $\bar\Gamma_n$. In $\Gamma_n$, the outer branches all end with different letters; for every left extension $a$ of $\lesp_n$, let $V_{a,n}$ be the label of the outer branch ending with $a$. Dually, let $\bar V_{a,n}$ be the label of the unique outer branch in $\bar\Gamma_n$ starting with $a$. The next theorem, due to Arnoux and Rauzy (for three-letter alphabets), describes the evolution of the Rauzy graphs in terms of these words; see also \cite{Cassaigne2006}.

\begin{theorem}[\cite{Arnoux1991}]\label{t:arnoux-rauzy}
    Let $X$ be a non-degenerate episturmian shift space. Fix $n\in\nn$ and let $a\in A$ be such that $\lesp_na = \lesp_{n+1}$ and $a\risp_n = \risp_{n+1}$. Then, the following relations hold between the labels of the inner and outer branches.
    \begin{itemize}
        \item If $\lesp_n \neq \risp_n$ then:
        \begin{enumerate}[series=fente-eclatement]
            \item $U_{n+1}a = U_n$ and $V_{b,n+1} = aV_{b,n}$ for all $b\in A$;\label{i:fente}
            \item $a\bar U_{n+1} = \bar U_n$ and $\bar V_{b,n+1} = \bar V_{b,n}a$ for all $b\in A$.
        \end{enumerate}
        \item If $\lesp_n = \risp_n$ then:
        \begin{enumerate}[resume=fente-eclatement]
            \item $U_{n+1}a = V_{a,n}$, $V_{a,n+1} = a$, and $V_{b,n+1} = aV_{b,n}$ for $b\neq a\in A$;\label{i:eclatement}
            \item $a\bar U_{n+1} = \bar V_{a,n}$, $\bar V_{a,n+1} = a$, and $\bar V_{b,n+1} = \bar V_{b,n}a$ for $b\neq a\in A$.
        \end{enumerate}
    \end{itemize}
\end{theorem}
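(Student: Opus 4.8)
\textbf{Proof proposal for Theorem~\ref{t:arnoux-rauzy}.}

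The plan is to track how a single Rauzy graph $\Gamma_n$ transforms into $\Gamma_{n+1}$, using the combinatorial fact that edges of $\Gamma_{n+1}$ correspond to factors of length $n+2$, which are in bijection with the paths of length $2$ in $\Gamma_n$. Since $X$ is non-degenerate episturmian, exactly one vertex $\lesp_n$ has several incoming labels and exactly one vertex $\risp_n$ has several outgoing labels; the letter $a$ in the statement is the unique letter such that $a\lesp_n$ (resp.\ $\risp_n a$) is again special, i.e.\ the one that "survives" as the new special factor. First I would recall the structural description given just before the theorem: $\Gamma_n$ is an inner branch $\lesp_n \to \risp_n$ (labelled $U_n$ in $\Gamma_n$, $\bar U_n$ in $\bar\Gamma_n$) together with outer branches $\risp_n \to \lesp_n$ (labelled $V_{b,n}$, resp.\ $\bar V_{b,n}$), one for each extension letter $b$. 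The key local observation is that passing from $\Gamma_n$ to $\Gamma_{n+1}$ amounts to "following each vertex by one step": a vertex $u$ of $\Gamma_{n+1}$ sits over the edge $u_{[0,n)} \to u_{(0,n+1]}$ of $\Gamma_n$, and an edge of $\Gamma_{n+1}$ sits over a length-$2$ path in $\Gamma_n$. Thus $\Gamma_{n+1}$ is obtained from $\Gamma_n$ by the usual "edge graph" construction, restricted to those length-$2$ paths that actually occur in $\lang(X)$.

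For the case $\lesp_n \neq \risp_n$: here the inner branch has length $\geq 1$, so $\risp_n$ and $\lesp_n$ are distinct vertices, and only the first vertex after $\lesp_n$ along the outer branches, together with the structure near $\risp_n$, can change. Concretely, the inner branch of $\Gamma_{n+1}$ runs from $\lesp_{n+1} = \lesp_n a$ to $\risp_{n+1} = a\risp_n$; reading labels, the $\Gamma_n$-label of this path is $U_n$ but one loses the last letter (the terminal step into $\risp_n$ is no longer on the inner branch of $\Gamma_{n+1}$ because $\risp_n$ is no longer special at level $n+1$), giving $U_{n+1}a = U_n$. Dually $a\bar U_{n+1} = \bar U_n$ for the right labelling. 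Each outer branch of $\Gamma_{n+1}$ corresponds to an outer branch of $\Gamma_n$ traversed starting one step later, i.e.\ starting at the vertex reached from $\risp_n$ by the unique inner-branch step labelled $a$; prepending that letter to the old label gives $V_{b,n+1} = aV_{b,n}$, and symmetrically $\bar V_{b,n+1} = \bar V_{b,n}a$. I would verify these by a direct bookkeeping of which length-$2$ paths of $\Gamma_n$ survive, being careful that the only branching vertices are $\lesp_n$ (in-degree) and $\risp_n$ (out-degree).

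For the case $\lesp_n = \risp_n$: now the single bispecial vertex splits. In $\Gamma_{n+1}$ the new special vertex is $\lesp_{n+1} = \lesp_n a = \risp_n a = \risp_{n+1}$ wait -- more precisely $\lesp_{n+1}=\lesp_n a$ and $\risp_{n+1}=a\risp_n$, and these two may again coincide or not, but the structural claim is about labels regardless. The outer branch of $\Gamma_n$ ending with the letter $a$ becomes, together with the (now trivial) inner branch, the new inner branch $U_{n+1}$, satisfying $U_{n+1}a = V_{a,n}$; the corresponding outer branch of $\Gamma_{n+1}$ labelled $a$ collapses to the single edge $V_{a,n+1} = a$ (the loop at the bispecial vertex through the letter $a$); and every other outer branch $V_{b,n}$ acquires the prefix $a$, giving $V_{b,n+1} = aV_{b,n}$. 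The dual statements follow by the same argument applied to $\bar\Gamma_n$, using the reversal symmetry of $\lang(X)$. The main obstacle I anticipate is not conceptual but notational: one must pin down exactly which edge of $\Gamma_n$ each vertex and edge of $\Gamma_{n+1}$ lies over, and correctly identify which length-$2$ paths at the (bi)special vertices are legal factors -- this is where the non-degeneracy hypothesis and the "at most one left special factor of each length" property do the real work, forcing the branching pattern to be exactly as described. Once that dictionary is set up, each of the four displayed identities is a one-line check; it may be cleanest to prove~\ref{i:fente} and~\ref{i:eclatement} carefully and obtain~\ref{i:fente} second halves and~\ref{i:eclatement} second halves by invoking closure of $\lang(X)$ under reversal, which swaps $\Gamma_n \leftrightarrow \bar\Gamma_n$, $U_n \leftrightarrow \bar U_n$, $V_{b,n} \leftrightarrow \bar V_{b,n}$ and reverses all words.
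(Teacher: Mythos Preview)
The paper does not prove this theorem: it is stated with attribution to Arnoux and Rauzy \cite{Arnoux1991} (see also \cite{Cassaigne2006}) and then used as a black box. So there is no ``paper's own proof'' to compare against.

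Your outline is the standard line-graph argument and is essentially correct. A couple of imprecisions are worth flagging before you write it up. In the case $\lesp_n\neq\risp_n$, your description of the outer branches of $\Gamma_{n+1}$ is slightly off: each one is not an outer branch of $\Gamma_n$ ``traversed starting one step later'', but rather the last edge $e_{k-1}$ of the old inner branch (this is the vertex $\risp_{n+1}$), followed by all edges of an old outer branch, followed by the first edge $e_0$ of the old inner branch (the vertex $\lesp_{n+1}$); the label then reads $a V_{b,n}$ because $(e_{k-1})_0=a$. In the case $\lesp_n=\risp_n$, you implicitly assume that $\lesp_{n+1}$ and $\risp_{n+1}$, viewed as edges of $\Gamma_n$, lie on the \emph{same} outer loop (namely the one with $\Gamma_n$-label $V_{a,n}$). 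This needs a one-line justification: $\risp_{n+1}=a\lesp_n$ is the last edge of the $V_{a,n}$ loop by definition, while $\lesp_{n+1}=\lesp_n a$ is a first edge out of $\lesp_n$; if the inner branch of $\Gamma_{n+1}$ passed through $\lesp_n$ in the middle it would meet a right-special vertex other than $\risp_{n+1}$, which is impossible. Finally, for $V_{a,n+1}=a$ you need $a\lesp_n a\in\lang(X)$, which holds because $a\lesp_n=\risp_{n+1}$ is right special; your parenthetical ``loop at the bispecial vertex'' is misleading since $\Gamma_{n+1}$ need not have a bispecial vertex. The reversal-symmetry shortcut for the barred statements is fine.
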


Using this as a starting point, we give a description for sets of return words in episturmian shift spaces. The description, stated in \cref{t:return} below, is phrased in terms of \emph{directive words}, which we recall next. 

First, let us extend the notion of palindromic closure to infinite words. Let $\bs d\in A^\nn$. By Justin's second formula \eqref{J'}, $\pal(\bs d_{[0,n)})$ is a (strict) prefix of $\pal(\bs d_{[0,n+1)})$ for every $n\in\nn$. It follows that there exists a unique infinite word, denoted $\pal(\bs d)$, having $\pal(\bs d_{[0,n)})$ as prefix for every $n\in\nn$. This naturally extends $\pal$ to a map $A^\nn\to A^\nn$. 

By \cite{Droubay2001}, Theorem~1, this extension of $\pal$ gives a bijection from $A^\nn$ onto the set of standard episturmian words. In particular $\lesp_n$, whenever it exists, is a prefix of $\pal(\bs d)$ (\cite{Droubay2001}, Proposition~5). Moreover, the words $\pal(\bs d_{[0,k)})$ are exactly the prefixes of $\pal(\bs d)$ which are bispecial whenever they are right special. All of these words are bispecial, or equivalently $\pal(\bs d)$ is non-degenerate, precisely when $\bs d$ has at least two letters occurring infinitely often (\cite{Droubay2001}, Theorem 3).

Since an episturmian shift space $X$ contains \emph{exactly one} standard episturmian word, we conclude that $X$ is the orbit-closure of $\pal(\bs d)$ for a unique $\bs d\in A^\nn$. We call $\bs d$ the \emph{directive word} of the shift $X$. Note that the shift $X$ does not need here to be generated by an episturmian morphism. See \cite{Glen2009a} for more details on directive words of episturmian shift spaces. 

\begin{lemma}[\cite{Justin2000}, Lemma~3.2]
    Let $X$ be an episturmian shift space with directive word $\bs d\in A^\nn$. Take $w\in\lang(X)$ and let $n=n(w)$ be the least positive integer such that $w$ occurs in $\pal(\bs d_{[0,n)})$. Then $w$ occurs exactly once in $\pal(\bs d_{[0,n)})$.
\end{lemma}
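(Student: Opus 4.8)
The plan is to argue by induction on $n(w)$ (for $w\neq\emptyw$; the statement is understood for non-empty $w$). Write $p_k := \pal(\bs d_{[0,k)})$; each $p_k$ is a palindrome, and by Justin's formula \eqref{J'} the word $p_{k-1}$ is a proper prefix of $p_k$, so the $p_k$ converge to $\pal(\bs d)$, the standard episturmian word generating $X$. Since $X$ is minimal, $\lang(X)$ is exactly the set of factors of $\pal(\bs d)$, hence of the $p_k$'s, so $n(w)$ is well defined. The case $n(w)=1$ is the base: $p_1 = d_0$ is a single letter and then necessarily $w = d_0$, which occurs exactly once.

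For $n(w)=k\geq 2$ I would invoke the classical recursion for iterated palindromic closure (derivable from \eqref{J'}; see \cite{Luca1997a}): writing $a := d_{k-1}$, either \textup{(i)} $a\notin\operatorname{alph}(\bs d_{[0,k-1)})$ and $p_k = p_{k-1}\,a\,p_{k-1}$; or \textup{(ii)} $a\in\operatorname{alph}(\bs d_{[0,k-1)})$, and, with $j$ the largest index $<k-1$ for which $d_j = a$, one has $p_k = p_{k-1}x$ where $x$ is the proper non-empty suffix of $p_{k-1}$ determined by $p_{k-1} = p_jx$; equivalently $p_k = p_j x x$, and $p_k = \widetilde{x}\,p_{k-1}$ because $p_k$ is a palindrome. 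The mechanism common to both cases: since $p_{k-1}$ is a palindromic prefix of the palindrome $p_k$, it is also a suffix of $p_k$, with the same set of factors; as $w$ occurs in $p_k$ but not in $p_{k-1}$, every occurrence of $w$ in $p_k$ must meet both the ``prefix-only'' and the ``suffix-only'' part of $p_k$, i.e.\ it must cross the central seam.

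In case \textup{(i)} the letter $a$ occurs exactly once in $p_k$ (it does not occur in $p_{k-1}$, whose alphabet equals $\operatorname{alph}(\bs d_{[0,k-1)})$), so a crossing occurrence of $w$ must cover this unique $a$; this determines its position, giving at most — hence exactly — one occurrence. Case \textup{(ii)} is the main obstacle. A crossing occurrence of $w$ at position $i$ (so $i<|x|$ and $i+|w|>|p_{k-1}|$) necessarily contains, at the absolute positions $|x|-1,\dots,|p_{k-1}|$ of $p_k$, the palindrome $a\,p_j\,a$ — this uses that $p_j a$ is a prefix, hence $a\,p_j$ a suffix, of $p_{k-1}$, and that $p_k[\,|p_{k-1}|\,]=a$. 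Hence it suffices to prove that $a\,p_j\,a$ occurs exactly once in $p_k$: then it sits at a single relative position inside $w$ (two would be incompatible with uniqueness in $p_k$), which pins down the occurrence of $w$, so $w$ occurs at most — hence exactly — once. For the uniqueness of $a\,p_j\,a$ in $p_k$ I would combine the classical description of palindromic prefixes of standard episturmian words (\cite{Droubay2001}: they form a chain, $p_{k-1}$ being the longest proper palindromic prefix of $p_k$) with Fine and Wilf's theorem: a second occurrence of $a\,p_j\,a$ would force a third occurrence of $p_{k-1}$ strictly inside $p_k = p_j x x$, hence a period of $p_k$ smaller than $|p_{k-1}|$ over a prefix of length $\geq|p_{k-1}|$, producing a palindromic prefix of $p_k$ strictly longer than $p_{k-1}$ — a contradiction.

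The delicate step, which I expect to need the most care, is case \textup{(ii)}: correctly isolating the ``fresh block'' $a\,p_j\,a$ that every crossing occurrence of $w$ must carry, and then running the periodicity/Fine--Wilf argument showing it appears only once in $p_k$ — equivalently, controlling how the occurrences of the palindromic prefix $p_{k-1}$ propagate on passing from $p_{k-1}$ to $p_k = p_j x x$. The base case, case \textup{(i)}, and the bookkeeping around the seam are all routine.
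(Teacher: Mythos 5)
The paper gives no proof of this lemma (it is imported verbatim from \cite{Justin2000}, Lemma~3.2), so your argument has to stand on its own. Its skeleton does: with $p_k=\pal(\bs d_{[0,k)})$, the induction on $n(w)$, the base case, case (i), and the seam-crossing bookkeeping in case (ii) are all correct, and you correctly reduce case (ii) to a single claim, namely that $a\,p_j\,a$ --- which is exactly the longest palindromic suffix of the prefix $p_{k-1}a$ of $p_k$ --- occurs only once in $p_k$, at position $|x|-1$.

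The genuine gap is your proof of that claim. The pivotal assertion, that a second occurrence of $a\,p_j\,a$ ``would force a third occurrence of $p_{k-1}$ strictly inside $p_k$'', is not justified and is not routine: nothing established earlier explains why an extra occurrence of the short palindrome $a\,p_j\,a$ should propagate to an occurrence of the much longer word $p_{k-1}$ at a new position, and any propagation statement of that kind (e.g.\ that every occurrence of $p_j$ flanked by $a$'s sits inside an occurrence of a longer palindromic prefix) is itself a lemma of essentially the same depth as the one you are proving. The Fine--Wilf conclusion is also only gestured at: a single period $q<|x|$ on a prefix of length $q+|p_{k-1}|$ does not by itself yield a palindromic prefix longer than $p_{k-1}$; one needs the extra observation that a palindrome occurring at positions $0$ and $q$ with $q\le|p_{k-1}|$ forces the prefix of length $q+|p_{k-1}|$ to be a palindrome, and then the fact that the palindromic prefixes of $\pal(\bs d)$ are exactly the $p_m$. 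A clean way to close the hole using only classical facts: episturmian words are rich in palindromes, i.e.\ the longest palindromic suffix of every prefix of $\pal(\bs d)$ occurs exactly once in that prefix (\cite{Droubay2001}); applied to the prefix $p_{k-1}a$, whose longest palindromic suffix is precisely $a\,p_j\,a$, this gives unioccurrence of $a\,p_j\,a$ in $p_{k-1}a$; and since $p_k$ and $a\,p_j\,a$ are both palindromes, occurrences of the latter in $p_k$ come in pairs symmetric about the position $|x|-1$, so an occurrence to the right of the seam would reflect to a second occurrence inside $p_{k-1}a$, a contradiction. With that substitution (and the harmless correction that $x$ need not be proper: when $j=0$ one has $x=p_{k-1}$), your induction goes through.
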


Given a word $u$ and positive integer $0\leq i\leq |\pal(u)|$, we let $\psi_u^{(i)}$ be the $i$-th right conjugate of $\psi_u$; recall that $\psi_u$ has $|\pal(u)|+1$ conjugates by \cref{p:rcp}, so this is well-defined. Dually, we let $\bar\psi_u^{(i)}$ be the $i$-th \emph{left} conjugate of $\bar\psi_u$. Observe that $\psi_u^{(i)} = \bar\psi_u^{(m-i)}$ where $m = |\pal(u)|$.

The next theorem combines two results from \cite{Justin2000}. In particular, the first part of the proof is essentially \cite[Theorem 4.4]{Justin2000}; we provide an argument for the sake of completeness. We also recall in passing that episturmian shifts are always minimal (\cite{Droubay2001}, Theorem 2).

\begin{theorem}\label{t:return}
    Let $X$ be a non-degenerate episturmian shift space with directive word $\bs d\in A^\nn$. Take $w\in\lang(X)$ and let $n=n(w)$ be the least positive integer such that $w$ occurs in $\pal(\bs d_{[0,n)})$. Assume that $w$ occurs at index $\ell$ in $\pal(\bs d_{[0,n)})$ and let $\ell' = |\pal(\bs d_{[0,n)})| - \ell - |w|$. Then the following equalities hold:
    \begin{equation*}
        \ret_X(w) = \psi_{\bs d_{[0,n)}}^{(\ell)}(A),\qquad \bar\ret_X(w) = \bar\psi_{\bs d_{[0,n)}}^{(\ell')}(A).
    \end{equation*}
\end{theorem}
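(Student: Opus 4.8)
The strategy is to analyze how the return sets of a word $w$ behave when $w$ is pulled back along one step of the directive sequence, and then induct on $n=n(w)$. Let me write $\bs d = \bs d_{[0,n)}$-prefixes implicitly and set $v = \pal(\bs d_{[0,n-1)})$, $v' = \pal(\bs d_{[0,n)})$, with $a = \bs d_{n-1}$ the $n$-th directive letter, so that $v' = (va)^{(+)}$. The base case $n=1$ is essentially the observation that $\pal(\bs d_{[0,1)}) = \bs d_0$ is a single letter $b$, and the only word occurring in it is $b$ itself (with $\ell = 0$, $\ell' = 0$); its return set in $X$ is computed directly from the Rauzy graph $\Gamma_1$ using \cref{t:arnoux-rauzy}, and one checks it equals $\psi_b(A) = \bar\psi_b(A)$ after noting $\psi_{b}^{(0)} = \bar\psi_b^{(0)}$ when $|\pal(b)| = 0$.

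For the inductive step, the key point is that a word $w$ with $n(w) = n$ occurs exactly once in $v'$ (by the lemma of Justin) but not at all in $v$; consequently $w$ must ``straddle'' the new material added when passing from $\Gamma_{|v|}$ to $\Gamma_{|v'|}$, which is governed by \cref{t:arnoux-rauzy}. I would decompose the argument by first relating $w$ to a shorter word $w'$ with $n(w') < n$: concretely, I expect that $w$ is a factor of $\psi_a(w') \cdot(\text{bounded overhang})$, or dually that $w$ arises as the image under one of the conjugates of $\psi_a$ of a word occurring in $v$, so that the return words of $w$ in $X$ are images under the appropriate conjugate of $\psi_a$ of the return words of $w'$ in $\psi_a^{-1}X$ (the episturmian shift with directive word $\bs d_{[1,\infty)}$, shifted appropriately). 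This is where the conjugacy bookkeeping enters: applying $\psi_a$ to a return word and then accounting for the shift by the index $\ell$ is exactly what produces $\psi_{\bs d_{[0,n)}}^{(\ell)}$ from $\psi_{\bs d_{[1,n)}}^{(\ell_0)}$, using the multiplicativity $\psi_{\bs d_{[0,n)}} = \psi_a \circ \psi_{\bs d_{[1,n)}}$ and the way right-conjugation composes. The indices $\ell$ and $\ell'$ track, respectively, the distance from the left end and the right end of the unique occurrence of $w$ in $v'$, and the formula $\psi_u^{(i)} = \bar\psi_u^{(m-i)}$ with $m = |\pal(u)|$ reconciles the left and right statements.

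The main obstacle will be handling the two regimes of \cref{t:arnoux-rauzy} --- the ``fente'' case ($\lesp_n \neq \risp_n$) versus the ``éclatement'' case ($\lesp_n = \risp_n$) --- and, within each, correctly tracking which conjugate of $\psi_a$ acts on a given return word depending on where $w$ sits relative to the inner branch and the outer branches. One has to verify that the position index $\ell$ of $w$ in $v'$ transforms correctly under the desingularization step: when $w$ is pushed back, its occurrence index in $v = \pal(\bs d_{[0,n-1)})$ changes in a way dictated by the lengths $|U_k|, |V_{b,k}|$ appearing in Arnoux--Rauzy's relations, and one must check this matches the shift in conjugacy index forced by $\psi_a$. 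I would organize this by proving a clean one-step lemma: if $w$ occurs at index $\ell$ in $\pal(u a)$ and $w = \psi_a(w'')$-type-decomposes with $w''$ occurring at index $\ell''$ in $\pal(u)$ (with an explicit relation between $\ell, \ell''$), then $\psi_{ua}^{(\ell)} = \psi_a \circ \psi_u^{(\ell'')}$ as conjugates, after which the theorem follows by a straightforward induction. Verifying that one-step lemma --- essentially a compatibility between \cref{p:rcp}, \cref{l:pal-gcp}, and \cref{t:arnoux-rauzy} --- is the technical heart; everything else is bookkeeping with \eqref{eq:return} to pass between the left and right versions.
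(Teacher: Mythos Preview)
The paper's proof takes a much simpler route. Rather than inducting on $n$ for arbitrary $\ell$, it first handles only the bispecial case $w = \pal(\bs d_{[0,n)})$ (so $\ell = \ell' = 0$), where the left return words to $w$ are exactly the outer-branch labels $V_{a,|w|}$ in the Rauzy graph; the induction then amounts to checking that $V_{a,p} = \psi_{\bs d_{[0,n)}}(a)$, which follows directly from the Arnoux--Rauzy recursions of \cref{t:arnoux-rauzy} (this is essentially \cite[Theorem~4.4]{Justin2000}). The general case is then reduced to the bispecial case in a single step: writing $\pal(\bs d_{[0,n)}) = fwg$ with $|f|=\ell$, the paper invokes \cite[Corollary~4.1]{Justin2000}, which gives $\ret_X(w) = f^{-1}\ret_X(fwg)f$; since $f$ is the length-$\ell$ prefix of $\pal(\bs d_{[0,n)})$, this is precisely $\psi_{\bs d_{[0,n)}}^{(\ell)}(A)$. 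All the conjugacy bookkeeping you anticipate collapses into that one conjugation.

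Your desubstitution-style induction could in principle be made to work, but the sketch has genuine confusions. You set $a = \bs d_{n-1}$ (the \emph{last} letter of the directive prefix), yet speak of ``$\psi_a^{-1}X$, the episturmian shift with directive word $\bs d_{[1,\infty)}$'': that would require $a = \bs d_0$, the \emph{first} letter. Relatedly, your one-step lemma ``$\psi_{ua}^{(\ell)} = \psi_a \circ \psi_u^{(\ell'')}$'' cannot hold as stated, since $\psi_{ua} = \psi_u \circ \psi_a$; if you meant $\psi_{au}$ the desubstitution picture makes sense, but then the relation between the occurrence index of $w$ in $\pal(au)$ and that of the desubstituted word in $\pal(u)$ depends on whether $\ell$ lands at a cut point of $\psi_a$, and handling the overhang is exactly the delicate case analysis you flag but never resolve. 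Finally, your base case is off: $|\pal(b)| = 1$, not $0$, and $\psi_b^{(0)} = \psi_b \neq \bar\psi_b = \bar\psi_b^{(0)}$ in general (rather $\psi_b^{(0)} = \bar\psi_b^{(1)}$). The paper's two-step structure --- induct for bispecial words, then conjugate once --- sidesteps all of this.
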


\begin{proof}
    Let $p=|w|$. First, assume that $w$ is bispecial, which is the case precisely when $w = \lesp_{p} = \risp_{p} = \pal(\bs d_{[0,n)})$. In this case we have $\ret_X(w) = \{V_{a,p}\mid a\in A\}$ and $\bar\ret_X(w) = \{\bar V_{a,p}\mid a\in A\}$. Let us show by induction on $n$ that $V_{a,p} = \psi_{d_{[0,n)}}(a)$ for every $a\in A$; the dual of this argument gives $\bar V_{a,p} = \bar\psi_{d_{[0,n)}}(a)$. If $n=0$, then $w = \emptyw$ and $\psi_{\emptyw}(a) = a = V_{a,0}$, as expected. 

    For the induction step assume that $n>0$, let $v = \pal(\bs d_{[0,n-1)})$ and $q=|v|$. Using Justin's second formula \eqref{J'}, we find that
    \begin{equation*}
       w = \pal(\bs d_{[0,n)}) = \pal(\bs d_{[0,n-1)})\bar\psi_{\bs d_{[0,n-1)}}(\bs d_n) = v\bar\psi_{\bs d_{[0,n-1)}}(\bs d_n).
    \end{equation*}
    But notice that $\bar\psi_x(a)$ starts with $a$, for every $a\in A$ and $x\in A^*$; in particular it follows that $v\bs d_n = \lesp_q\bs d_n = \lesp_{q+1}$. Since $v=L_q=R_q$, we have by Theorem~\ref{t:arnoux-rauzy}~\ref{i:eclatement}:
    \begin{gather*}
        U_{q+1}\bs d_n = V_{\bs d_n,q},\qquad V_{\bs d_n,q+1} = \bs d_n,\\
        V_{b,q+1} = \bs d_nV_{b,q} \quad \text{for} \ b\neq \bs d_n.
    \end{gather*}
    Since $\lesp_i\neq\risp_i$ for $q<i<p$, we can then apply \cref{t:arnoux-rauzy}~\ref{i:eclatement} $p-q$ times to conclude that 
    \begin{equation*}
        V_{b,p} = U_{q+1}V_{\bs d_n,q+1} = 
        \begin{cases}
                V_{\bs d_n,q} & \text{ if }b = \bs d_n\\
                V_{\bs d_n,q}V_{b,q} & \text{ if }b \neq \bs d_n.
        \end{cases}
    \end{equation*}
    Finally, using the induction hypothesis, we have:
    \begin{gather*}
        V_{\bs d_n,p} = V_{\bs d_n,q} = \psi_{\bs d_{[0,n-1)}}(\bs d_n) = \psi_{\bs d_{[0,n)}}(\bs d_n),\\
        V_{b,p} = V_{\bs d_n,q}V_{b,q} = \psi_{\bs d_{[0,n-1)}}(\bs d_nb) = \psi_{\bs d_{[0,n)}}(b) \quad \text{for} \ b\neq d_n.
    \end{gather*}
    
    It remains to handle the case where $w$ is not bispecial. Let $\pal(\bs d_{[0,n)]}) = fwg$ where $|f|=\ell$ and $|g|=\ell'$. Since $f$ is a prefix of $\pal(\bs d_{[0,n)]})$, it follows that $f^{-1}\psi_{\bs d_{[0,n)}}f = \psi_{\bs d_{[0,n)}}^{(\ell)}$. Next since $fwg$ is bispecial, we have by the first part of the proof that
    \begin{equation*}
        \ret_X(fwg) = \psi_{\bs d_{[0,n)}}(A).
    \end{equation*}
    Finally, by \cite{Justin2000}, Corollary 4.1, we get
    \begin{equation*}
        \ret_X(w) = f^{-1}\ret_X(fwg)f = f^{-1}\psi_{\bs d_{[0,n)}}(A)f = \psi_{\bs d_{[0,n)}}^{(\ell)}(A).
    \end{equation*}

    The second formula is obtained in a similar way: observe that $\bar\ret_X(fwg) = \bar\psi_{\bs d_{[0,n)}}(A)$ and $g\bar\psi_{\bs d_{[0,n)}}g^{-1} = \psi_{\bs d_{[0,n)}}^{(\ell')}$, and then apply the dual form of \cite{Justin2000}, Corollary~4.1.
\end{proof}

\begin{remark}
    Since episturmian morphisms are invertible as morphisms of the free group, we recover from the previous proposition the fact that episturmian shift spaces obey the so-called \emph{Return Theorem}, which states that in a dendric shift space (the common generalization of episturmian shifts and codings of interval exchanges discussed in the introduction), every return set is a basis for the free group over the alphabet of the shift~\cite{Berthe2015}.
\end{remark}

The next corollary reinterprets the above theorem in terms of $\gcs$ and $\gcp$.
\begin{corollary}
    Let $X$ be a non-degenerate episturmian shift and $u\in\lang(X)$. 
    \begin{enumerate}
        \item For every pair of distinct return words $r\neq s\in\ret_X(u)$, the word $w = \gcs(rs,sr)\gcp(rs,sr)$ is the shortest bispecial factor in which $u$ occurs. Moreover, $u$ occurs in $w$ at index $|\gcs(rs,sr)|$ and nowhere else.
        \item For every pair of distinct return words $r\neq s\in\bar\ret_X(u)$, the word $w = \gcs(rs,sr)\gcp(rs,sr)$ is the shortest bispecial factor in which $u$ occurs. Moreover, $u$ occurs in $w$ at index $|w|-|\gcp(rs,sr)|-|u|$ and nowhere else.
    \end{enumerate}
\end{corollary}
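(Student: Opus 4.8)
The plan is to read the statement off \cref{t:return} together with \cref{c:index,c:gcs-gcp}. Fix a nonempty $u\in\lang(X)$ (the case $u=\emptyw$ is immediate, with $w=\emptyw$); put $n=n(u)$, $v=\bs d_{[0,n)}$, $m=|\pal(v)|$, and let $\ell$ be the index at which $u$ occurs in $\pal(v)$, which by the preceding lemma of Justin is the only such index. Then \cref{t:return} gives $\ret_X(u)=\sigma(A)$ with $\sigma:=\psi_v^{(\ell)}$, an episturmian morphism conjugate to $\psi_v$ and of conjugacy index $\ell$ (by \cref{p:rcp,d:index}, since $\psi_v^{(\ell)}=w_0^{-1}\psi_v w_0$ for the length-$\ell$ prefix $w_0$ of $\pal(v)$). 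Since episturmian morphisms are injective (being invertible over the free group), a pair of distinct left return words has the form $r=\sigma(a)$, $s=\sigma(b)$ with $a\neq b$, whence $rs=\sigma(ab)$ and $sr=\sigma(ba)$. Now \cref{c:gcs-gcp} identifies $w:=\gcs(rs,sr)\gcp(rs,sr)$ with $\pal(v)=\pal(\bs d_{[0,n)})$, while \cref{c:index} gives $|\gcs(rs,sr)|=\ind(\sigma)=\ell$. As $X$ is non-degenerate, $\pal(\bs d_{[0,n)})$ is bispecial, $u$ occurs in it at index $\ell$ and (preceding lemma again) nowhere else, and it is the shortest bispecial factor containing $u$ — see the last paragraph. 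This is the first assertion.

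The second assertion is dual. Here \cref{t:return} gives $\bar\ret_X(u)=\bar\sigma(A)$ with $\bar\sigma:=\bar\psi_v^{(\ell')}$ and $\ell'=m-\ell-|u|$; by the identity $\bar\psi_v^{(i)}=\psi_v^{(m-i)}$ recorded just before \cref{t:return}, we have $\bar\sigma=\psi_v^{(\ell+|u|)}$, again a conjugate of $\psi_v$, now of conjugacy index $\ell+|u|$. Writing a pair of distinct right return words as $r=\bar\sigma(a)$, $s=\bar\sigma(b)$ with $a\neq b$, \cref{c:gcs-gcp} once more gives $w=\gcs(rs,sr)\gcp(rs,sr)=\pal(\bs d_{[0,n)})$ — literally the same bispecial factor as in the first part, hence the shortest one in which $u$ occurs. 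Finally, combining \cref{c:gcs-gcp,c:index} yields $|\gcp(rs,sr)|=m-\ind(\bar\sigma)=m-\ell-|u|$, so $|w|-|\gcp(rs,sr)|-|u|=\ell$, the unique index at which $u$ occurs in $w$.

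Two ingredients used above are routine: the index bookkeeping, and the translation $\bar\psi_v^{(\ell')}=\psi_v^{(m-\ell')}$, both recalled in the excerpt. The one point that needs genuine care — and the only place where non-degeneracy is really used — is the claim that $w=\pal(\bs d_{[0,n)})$ is the \emph{shortest} bispecial factor in which $u$ occurs. This rests on the fact that the bispecial factors of a non-degenerate episturmian shift are exactly the palindromic prefixes $\pal(\bs d_{[0,k)})$, $k\in\nn$, of its standard word $\pal(\bs d)$ (a consequence of closure under mirror together with the description of left special factors as prefixes of $\pal(\bs d)$); these form a chain for the prefix order, so that the minimality built into $n=n(u)$ forces every strictly shorter bispecial factor to miss $u$.
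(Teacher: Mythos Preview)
Your proof is correct and is precisely the intended derivation: the paper states this corollary without proof, presenting it as a direct reinterpretation of \cref{t:return}, and your argument—combining \cref{t:return} with \cref{c:index,c:gcs-gcp}, the Justin lemma on unique occurrence, and the characterization of bispecial factors as the words $\pal(\bs d_{[0,k)})$—is exactly how one unpacks that reinterpretation. The index bookkeeping in part~(ii), passing from $\bar\psi_v^{(\ell')}$ to $\psi_v^{(\ell+|u|)}$ and reading off $|\gcp(rs,sr)|=m-\ell-|u|$ via \cref{c:index}, is handled cleanly.
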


In particular, observe that the word $\gcs(rs,sr)\gcp(rs,sr)$ in the above corollary is independent of the choice of $r\neq s$ in $\ret_X(u)$ or $\bar\ret_X(u)$, and is equal to $\pal(\dee)$ for a unique prefix $\dee$ of the directive word of $X$. Thus, the following notation is well-defined.
\begin{notation}\label{n:return}
 Let $X$ be a non-degenerate episturmian shift space, $u\in\lang(X)$, and $r\neq s\in\ret_X(u)$. Then we use the following notation:
 \begin{equation*}
     \dee(u) = \pal^{-1}(\gcs(rs,sr)\gcp(rs,sr)),\quad \ell(u) = |\gcs(rs,sr)|,\quad \ell'(u) = |\gcp(rs,sr)|.
 \end{equation*}
All of these are independent of $r$ and $s$ and satisfy 
\begin{equation*}
    \ret_X(u) = \psi_{\dee(u)}^{(\ell(u))}(A),\qquad \bar\ret_X(u) = \bar\psi_{\dee(u)}^{(\ell'(u))}(A).
\end{equation*}
This is summarized in the commutative diagrams below.
\begin{equation*}
    \begin{tikzcd}
        \lang(X) \rar{\dee\times \ell} \drar[swap]{\ret_X} & A^*\times\nn \dar{\psi} \\
        & 2^{A^*}
    \end{tikzcd}
    \quad
    \begin{tikzcd}
        \lang(X) \rar{\dee\times \ell'} \drar[swap]{\bar\ret_X} & A^*\times\nn \dar{\bar\psi} \\
        & 2^{A^*}
    \end{tikzcd}
\end{equation*}
\end{notation}

For inner words, the pair $(\dee,\ell,\ell')$ admits the following simple description. We recall that $U_n$ is  the label of the inner branch in $\Gamma_n$.

\begin{lemma}\label{l:dl}
    Let $X$ be an episturmian shift directed by $\bs d$. Let $u$ be its $i$-th inner word of length $p$ (starting from $u=\lesp_p$ when $i=0$). Let $n$ be the number of bispecial prefixes of $L_p$. Then $\dee(w) = \bs d_{[0,n)}$, $\ell(w)=i$ and $\ell'(w) = |U_p|-i$.
\end{lemma}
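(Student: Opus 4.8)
The plan is to reduce everything to \cref{t:return}. Fix $u$, the $i$-th inner word of length $p$, and write $N$ for the least positive integer such that $u$ occurs in $\pal(\bs d_{[0,N)})$. Combining \cref{t:return} with \cref{n:return} (or with the corollary immediately following \cref{t:return}), one gets that $\pal(\bs d_{[0,N)})$ is the shortest bispecial factor containing $u$, that $\dee(u)=\bs d_{[0,N)}$, that $\ell(u)$ is the index at which $u$ occurs in $\pal(\bs d_{[0,N)})$, and that $\ell'(u)=|\pal(\bs d_{[0,N)})|-\ell(u)-p$. So it is enough to prove: (a) $N=n$; (b) $u$ occurs at index $i$ in $\pal(\bs d_{[0,n)})$; (c) $|\pal(\bs d_{[0,n)})|=p+|U_p|$. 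Indeed, (c) then yields $\ell'(u)=(p+|U_p|)-i-p=|U_p|-i$, as claimed.

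The case $L_p=R_p$, where $u=L_p$ is itself bispecial, $i=0$ and $U_p=\emptyw$, is degenerate and is checked directly from \cref{t:return}; so assume $L_p\neq R_p$. By definition of $n$, and since the bispecial factors of $X$ are precisely the words $\pal(\bs d_{[0,k)})$ (see the discussion preceding \cref{t:return}), the bispecial prefixes of $L_p$ are $\pal(\bs d_{[0,0)}),\dots,\pal(\bs d_{[0,n-1)})$. All bispecial factors are prefixes of $\pal(\bs d)$, hence comparable for the prefix order; as $\pal(\bs d_{[0,n)})$ is not among the bispecial prefixes of $L_p$, this forces $q:=|\pal(\bs d_{[0,n-1)})|<p<|\pal(\bs d_{[0,n)})|=:q'$, with $\pal(\bs d_{[0,n-1)})=L_q=R_q$.

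The key point, and the step I expect to be the main obstacle, is the claim that $\pal(\bs d_{[0,n)})$, read as a walk in the Rauzy graph $\Gamma_p$, traces exactly the inner branch $L_p\to R_p$. Granting this, the walk has $q'-p$ edges, so $|U_p|=q'-p$, which is (c); its vertex at position $i$ is $\pal(\bs d_{[0,n)})[i,i+p)$, which is the $i$-th inner word $u$, so $u$ occurs there at index $i$; and since $|u|=p>q=|\pal(\bs d_{[0,n-1)})|$, the word $u$ cannot occur in any shorter $\pal(\bs d_{[0,k)})$, so $N=n$ and $\ell(u)=i$, which is (a) and (b). To prove the claim I would argue as follows. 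The length-$p$ prefix of $\pal(\bs d_{[0,n)})$ is $L_p$ (as $p<q'$), and since $\pal(\bs d_{[0,n)})$ is a palindrome its length-$p$ suffix is $\overline{L_p}=R_p$ (using mirror closure). In a non-degenerate episturmian shift $R_p$ is the unique right-special factor of length $p$, so every length-$p$ vertex of $\Gamma_p$ other than $R_p$ has out-degree $1$; hence reading $\pal(\bs d_{[0,n)})$ from the left traces the deterministic path out of $L_p$, which coincides with the inner branch provided that path reaches $R_p$ only at its last vertex. It does reach $R_p$ at the end by the suffix computation, and it does not reach $R_p$ earlier: indeed $n(R_p)=n$ (again $|R_p|=p>q$), so by \cite{Justin2000}, Lemma~3.2, the word $R_p$ occurs in $\pal(\bs d_{[0,n)})$ exactly once. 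This establishes the claim, and with it the lemma. (Alternatively, (c) alone can be obtained by iterating the Arnoux--Rauzy relations of \cref{t:arnoux-rauzy} from $\Gamma_q$ up to $\Gamma_p$, using $|\psi_v(a)|=|\bar\psi_v(a)|$ to get $|U_p|=q'-p$.)
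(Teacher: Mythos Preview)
Your proof is correct and follows essentially the same route as the paper's: both hinge on the identification $\pal(\bs d_{[0,n)}) = L_pU_p$ and then read off $\dee,\ell,\ell'$ from \cref{t:return}. The paper reaches the identification from the other side, first observing that $L_pU_p$ is the smallest bispecial factor containing $L_p$ (and hence any inner word, since a shorter bispecial would still have $L_p$ as its length-$p$ prefix), then counting bispecial words to match it with $\pal(\bs d_{[0,n)})$; your argument via the palindrome/mirror suffix and the Justin--Pirillo uniqueness lemma traces the same inner branch in the opposite direction and is equivalent.
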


\begin{proof}
    First observe that $\lesp_pU_p = \bar U_p\risp_p$ is the smallest bispecial factor which contains $\lesp_p$. Moreover, the $i$-th factor of length $p$ in $\lesp_pU_p$ is precisely the $i$-th word on the inner branch in the Rauzy graph $\Gamma_p$. We claim that $\lesp_pU_p$ is also the smallest bispecial factor containing each of the inner words of length $p$. Indeed, any shorter bispecial word containing a word on the inner branch would have $\lesp_p$ as a prefix of length $p$, contradicting our first observation. Since $\lesp_pU_p$ is the $(n+1)$-th bispecial word in $\lang(X)$, it is equal to $\pal(\bs d_{[0,n)})$ and the result follows.
\end{proof}

\begin{example}\label{e:dl}
    Consider the ternary morphism $\sigma = \psi_{\ltr{abb}}\circ(\ltr{a}\:\ltr{c}\:\ltr{b})$. It is a primitive episturmian morphism, thus its shift space $X = X_\sigma$ is a strict episturmian shift. The directive word of $X$ is the periodic infinite word
    \begin{equation*}
        \bs d = (\ltr{abb\,caa\,bcc})^\infty.
    \end{equation*}
    The Rauzy graph $\Gamma_8$ is depicted in \cref{f:rauzy-dl} with each word accompanied by the values of $\dee$ and $\ell$.
    Take for example the word $w=\ltr{acababab}$, which is the first word on the outer branch of $\Gamma_8$ ending with the letter $\ltr{b}$. This word first appears in $\pal(\bs d_{[0,7)})$ at index 26; thus $\dee(w) = \ltr{abbcaab}$ and $\ell(w) = 26$. The corresponding return set is given by $\ret_\sigma(w) = \psi_{\ltr{abbcaab}}^{(26)}(\{\ltr{a},\ltr{b},\ltr{c}\})$, which after computation amounts to:
    \begin{align*}
        \{ & \ltr{acabababacababaababacababaabab},\\
        &\ltr{acabababacababaababacababaababacababaabab},\\
        &\ltr{acabababacababaababacababaababacababacababaababacababaabab}\}.
    \end{align*}
    
    \begin{figure}
    \begin{tikzpicture}[font=\small, xscale=2.5]
        \node[label = {[font=\scriptsize]above:{$(\ltr{abbca},6)$}}] (ababaaba) at (3,3)   {$\ltr{ababaaba}$} ;
        \node[label = {[font=\scriptsize]below:{$(\ltr{abbcaab},27)$}}] (cabababa) at (2,-2)  {$\ltr{cabababa}$} ;
        \node[label = {[font=\scriptsize]left :{$(\ltr{abbca},9)$}}] (baababac) at (0,2)   {$\ltr{baababac}$} ;
        \node[label = {[font=\scriptsize]right:{$(\ltr{abbca},4)$}}] (acababaa) at (3,1)   {$\ltr{acababaa}$} ;
        \node[label = {[font=\scriptsize]left :{$(\ltr{abbc},0)$}}] (ababacab) at (0,0)   {$\ltr{ababacab}$} ;
        \node[label = {[font=\scriptsize]above:{$(\ltr{abbc},2)$}}] (abacabab) at (2,0)   {$\ltr{abacabab}$} ;
        \node[label = {[font=\scriptsize]right:{$(\ltr{abbcaab},26)$}}] (acababab) at (3,-1)  {$\ltr{acababab}$} ;
        \node[label = {[font=\scriptsize]below:{$(\ltr{abbcaab},28)$}}] (abababac) at (1,-2)  {$\ltr{abababac}$} ;
        \node[label = {[font=\scriptsize]above:{$(\ltr{abbcaabc},56)$}}] (acababac) at (2,1.5)   {$\ltr{acababac}$} ;
        \node[label = {[font=\scriptsize]left :{$(\ltr{abbcaab},29)$}}] (bababaca) at (0,-1)  {$\ltr{bababaca}$} ;
        \node[label = {[font=\scriptsize]above:{$(\ltr{abbca},7)$}}] (babaabab) at (1.5,3) {$\ltr{babaabab}$} ;
        \node[label = {[font=\scriptsize]above:{$(\ltr{abbcaabc},57)$}}] (cababaca) at (1,1.5)   {$\ltr{cababaca}$} ;
        \node[label = {[font=\scriptsize]above:{$(\ltr{abbc},1)$}}] (babacaba) at (1,0)   {$\ltr{babacaba}$} ;
        \node[label = {[font=\scriptsize]left :{$(\ltr{abbca},10)$}}] (aababaca) at (0,1)   {$\ltr{aababaca}$} ;
        \node[label = {[font=\scriptsize]right:{$(\ltr{abbca},5)$}}] (cababaab) at (3,2)   {$\ltr{cababaab}$} ;
        \node[label = {[font=\scriptsize]above:{$(\ltr{abbca},8)$}}] (abaababa) at (0,3)   {$\ltr{abaababa}$} ;
        \node[label = {[font=\scriptsize]right:{$(\ltr{abbc},3)$}}] (bacababa) at (3,0)   {$\ltr{bacababa}$} ;
        \draw[->] (ababacab) to node [auto,font=\footnotesize] {$\ltr{a}$} (babacaba);
        \draw[->] (babacaba) to node [auto,font=\footnotesize] {$\ltr{b}$} (abacabab);
        \draw[->] (abacabab) to node [auto,font=\footnotesize] {$\ltr{a}$} (bacababa);
        
        \draw[->] (bacababa) to node [auto,swap,font=\footnotesize] {$\ltr{b}$} (acababaa);
        \draw[->] (acababaa) to node [auto,swap,font=\footnotesize] {$\ltr{a}$} (cababaab);
        \draw[->] (cababaab) to node [auto,swap,font=\footnotesize] {$\ltr{c}$} (ababaaba);
        \draw[->] (ababaaba) to node [auto,swap,font=\footnotesize] {$\ltr{a}$} (babaabab);
        \draw[->] (babaabab) to node [auto,swap,font=\footnotesize] {$\ltr{b}$} (abaababa);
        \draw[->] (abaababa) to node [auto,swap,font=\footnotesize] {$\ltr{a}$} (baababac);
        \draw[->] (baababac) to node [auto,swap,font=\footnotesize] {$\ltr{b}$} (aababaca);
        \draw[->] (aababaca) to node [auto,swap,font=\footnotesize] {$\ltr{a}$} (ababacab);
        
        \draw[->] (bacababa) to node [auto,swap,font=\footnotesize] {$\ltr{b}$} (acababac);
        \draw[->] (acababac) to node [auto,swap,font=\footnotesize] {$\ltr{a}$} (cababaca);
        \draw[->] (cababaca) to node [auto,swap,font=\footnotesize] {$\ltr{c}$} (ababacab);
        
        \draw[->] (bacababa) to node [auto,font=\footnotesize] {$\ltr{b}$} (acababab);
        \draw[->] (acababab) to node [auto,font=\footnotesize] {$\ltr{a}$} (cabababa);
        \draw[->] (cabababa) to node [auto,font=\footnotesize] {$\ltr{c}$} (abababac);
        \draw[->] (abababac) to node [auto,font=\footnotesize] {$\ltr{a}$} (bababaca);
        \draw[->] (bababaca) to node [auto,font=\footnotesize] {$\ltr{b}$} (ababacab);
    \end{tikzpicture}
    \caption{Rauzy graph $\Gamma_8$ from \cref{e:dl} with pairs $(\dee,\ell)$.}
    \label{f:rauzy-dl}
    \end{figure}
\end{example}

\section{Proof of the main result}\label{s:proofmain}

In this section, we establish some lemmas leading to the proof of \cref{t:main}.  It relies on a careful study of the inner branch of the Rauzy graphs $\Gamma_k$  of order $k=|\sigma(a_{\min}L_n)|$, where $n\in\nn$ is such that $\lesp_n=\risp_n$, where one   is able to locate  either
$\sigma(a_{\min} L_n)$, or  $\sigma(R_na_{\min})$.

Consider a primitive standard episturmian morphism $\sigma$. Let $a_{\min}$ be the minimal letter of $\sigma$, $j = |\sigma(a_{\min})|$ and $m = \frac{\Vert\sigma\Vert-|A|}{|A|-1}$. Recall from \eqref{length-pal} and \cref{p:rcp} that the cardinality of the conjugacy class of $\sigma$ is equal to $m+1$. Thus, if we denote by $\sigma^{(i)}$ the $i$-th right conjugate of $\sigma$, then $\sigma^{(m)}$ is its last conjugate. Moreover by \cref{p:minletter-epi}, we have $2j\leq m+1$, hence the morphisms 
\begin{equation*}
    \sigma^{(j)},\ \sigma^{(j+1)},\ \ldots,\ \sigma^{(m)}
\end{equation*}
cover at least half of the conjugacy class. For similar reasons the following morphisms also cover at least half of the conjugacy class:
\begin{equation*}
    \sigma^{(0)},\ \sigma^{(1)},\ \ldots,\ \sigma^{(m-j)}.
\end{equation*}

In the proof of the main result, we consider for a given $n\in\nn$ the four words:
\begin{equation*}
    a_{\min}\lesp_n,\quad a_{\min}\risp_n,\quad \risp_na_{\min},\quad\lesp_na_{\min}.
\end{equation*}
The first and third are always in $\lang(\sigma)$, though the second and fourth  need not be.  When $L_n=R_n$, the four words are  in $\lang(\sigma)$. As a first step towards the proof of the main result, we establish the following lemma which locates the images of these four words in the appropriate Rauzy graph according to $\ind(\sigma).$
\begin{lemma}\label{l:proof-1}
    Let $\sigma$ be a primitive episturmian morphism over $A$ and let $a_{\min}$ be its minimal letter. Let $j = |\sigma(a_{\min})|$ and $m = \frac{\Vert\sigma\Vert-|A|}{|A|-1}$. For every $n\in\nn$:
    \begin{enumerate}
        \item $\sigma(a_{\min}L_n)$ is left special when $\ind(\sigma) = j$;
            \label{i:left-special}
        \item $\sigma(a_{\min}R_n)$ is right special when $\ind(\sigma) = m$;
            \label{i:right-special}
        \item $\sigma(R_na_{\min})$ is right special when $\ind(\sigma)=m-j$;
            \label{i:right-special-dual}
        \item $\sigma(L_na_{\min})$ is left special when $\ind(\sigma)=0$;
            \label{i:left-special-dual}
    \end{enumerate}
\end{lemma}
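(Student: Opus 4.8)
The four items are pairwise dual (swap left/right, $\psi/\bar\psi$, prefix/suffix), so it suffices to prove one of them carefully — say \ref{i:left-special} — and indicate how the duality works for the others. The plan is to reduce everything to \cref{t:richomme}, \cref{c:index}, and \cref{p:minletter-epi}. Since left/right specialness is a property of the shift space $X_\sigma$, which depends only on the conjugacy class of $\sigma$, I would first note that we may replace $\sigma$ by any conjugate with the prescribed index; so when proving \ref{i:left-special} we may assume $\sigma = w^{-1}\psi_u w$ for some $u \in A^+$ and $w$ the prefix of $\pal(u)$ of length $\ind(\sigma) = j = |\sigma(a_{\min})|$. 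Then, by \cref{p:minletter-epi}\ref{i:minletter-epi-1}, the hypothesis $\ind(\sigma) \geq |\sigma(a_{\min})|$ (here an equality) gives $\sigma(A) \subseteq A^*\sigma(a_{\min})$, i.e.\ $\sigma(a_{\min})$ is a common suffix of $\sigma(b)$ for every $b \in A$.

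Next I would exploit that $\sigma(a_{\min})$ is actually the greatest common suffix: by \cref{c:index}, $\ind(\sigma) = |\gcs(\sigma(ab),\sigma(ba))|$ for any $a\neq b$, and since $|\gcs(\sigma(ab),\sigma(ba))| = |w| = j = |\sigma(a_{\min})|$ while $\sigma(a_{\min})$ is a suffix of that $\gcs$, the two coincide. Now take $b, c \in A$ distinct from $a_{\min}$ and distinct from each other (if $|A|=2$, take $b \neq a_{\min}$ and $c = a_{\min}$, adjusting slightly). Write $\sigma(b) = \beta\,\sigma(a_{\min})$ and $\sigma(c) = \gamma\,\sigma(a_{\min})$; because $\sigma(a_{\min})$ is the \emph{greatest} common suffix of $\sigma(b)$ and $\sigma(c)$ — equivalently because $\beta$ and $\gamma$ end with different letters, which is exactly the content of $\gcs = \sigma(a_{\min})$ having length exactly $j$ — the last letters of $\beta$ and $\gamma$ differ. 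Now $L_n a_{\min} b$ and $L_n a_{\min} c$ are both in $\lang(\sigma)$ for some choice (using that $L_n = R_n$ is bispecial, $a_{\min} L_n$ is a factor, and $L_n$ is right special so extends by at least two letters; more directly, $a_{\min} L_n$ occurs in the fixed point and $L_n$ being bispecial its images under $\sigma$ behave controllably). Applying $\sigma$, the words $\sigma(a_{\min} L_n)\,\sigma(b)$ and $\sigma(a_{\min} L_n)\,\sigma(c)$ lie in $\lang(\sigma)$; since $\sigma(b) = \beta\sigma(a_{\min})$ and $\sigma(c) = \gamma\sigma(a_{\min})$ have prefixes differing in the first letter (the last letters of $\beta,\gamma$ once we... ) — wait, I should instead argue on the \emph{left}: $b\,\sigma^{?}$... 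Let me re-aim: specialness on the left of $\sigma(a_{\min}L_n)$ needs two distinct letters preceding it. Since $L_n = R_n$ is right special, there are distinct $b,c$ with $L_n b, L_n c \in \lang(\sigma)$, hence $a_{\min}L_n b, a_{\min} L_n c \in \lang(\sigma)$, hence $\sigma(a_{\min})\sigma(L_n)\sigma(b)$ and $\sigma(a_{\min})\sigma(L_n)\sigma(c)$ are factors. Using $\sigma(b) = \beta\sigma(a_{\min})$, $\sigma(c) = \gamma\sigma(a_{\min})$, we get $\sigma(a_{\min}L_n)\,\beta\,\sigma(a_{\min})$ and $\sigma(a_{\min}L_n)\,\gamma\,\sigma(a_{\min})$ as factors; since $|\beta|,|\gamma| \geq 1$ and the first letters of $\beta$ and $\gamma$ need not differ — so instead I record that $\sigma(a_{\min})$ occurs just after $\sigma(a_{\min}L_n)$ inside $\sigma(a_{\min}L_n a_{\min})$ (as $a_{\min} L_n a_{\min} \in \lang(\sigma)$ since $L_n$ is also left special, giving $a_{\min}$ on both sides), and also occurs shifted. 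The cleanest route: show $\sigma(a_{\min})$ is a suffix of $\sigma(a_{\min}L_n)$ as well (since $L_n$ begins with... no).

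Because the bookkeeping above is delicate, the honest main obstacle is precisely this: converting the suffix/prefix containments of \cref{p:minletter-epi} into a statement about two distinct left (resp.\ right) extensions of the \emph{image} word $\sigma(a_{\min}L_n)$, i.e.\ tracking exactly where the ambiguity created by $L_n = R_n$ being bispecial lands after applying $\sigma$. The right framework, which I would ultimately use, is \cref{t:return}: $L_n = R_n$ bispecial means it equals $\pal(\bs d_{[0,N)})$ for some $N$, so $\sigma(L_n)$ together with the return words $\sigma(\ret_\sigma(L_n)) = \sigma(\psi_{\bs d_{[0,N)}}^{(0)}(A))$ describes a window around $\sigma(L_n)$; prepending $\sigma(a_{\min})$ and using that $\sigma(a_{\min})$ is a common suffix of all of $\sigma(A)$ (for item \ref{i:left-special}) forces $\sigma(a_{\min})\sigma(L_n) = \sigma(a_{\min}L_n)$ to be preceded in $\lang(\sigma)$ by the last letters of at least two distinct words $\sigma(r)$, $r \in \ret_\sigma(L_n)$ — and those last letters, being determined by a conjugate of $\psi_u$ with $|A| \geq 2$ images having pairwise distinct last letters (the $\gcs$ having length exactly $\ind(\sigma)$), are distinct. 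That yields left specialness. For \ref{i:right-special} one uses \cref{p:minletter-epi}\ref{i:minletter-epi-2} dually with $\ind(\sigma) = m$; \ref{i:right-special-dual} uses \ref{i:minletter-epi-2} with $\ind(\sigma) = m - j$ so that $\sigma(A) \subseteq \sigma(a_{\min})A^*$ and $\sigma(a_{\min})$ is a common \emph{prefix}, making $\sigma(R_n a_{\min})$ right special by the mirror argument; and \ref{i:left-special-dual} uses \ref{i:minletter-epi-1} with $\ind(\sigma) = 0$. In all four cases the engine is the same: one of the two containments $\sigma(A) \subseteq A^*\sigma(a_{\min})$ or $\sigma(A) \subseteq \sigma(a_{\min})A^*$ holds by the index hypothesis via \cref{p:minletter-epi}, and the fact that $\sigma(a_{\min})$ realizes the full $\gcs$ (resp.\ $\gcp$) forces the relevant two extensions of the image of $a_{\min}L_n$ (resp.\ $a_{\min}R_n$, $R_na_{\min}$, $L_na_{\min}$) to be genuinely distinct.
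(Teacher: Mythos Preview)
Your core idea for \ref{i:left-special} is right --- use that $\sigma(a_{\min})$ is a common suffix of all $\sigma(a)$ via \cref{p:minletter-epi}\ref{i:minletter-epi-1}, and that it is exactly the greatest common suffix via \cref{c:index} --- but you never assemble the pieces correctly. The step you keep circling around and missing is that the letters $b,c$ must be taken as \emph{left} extensions of $L_n$, not right extensions. Since $X_\sigma$ is strict (\cref{p:strict}), every letter is a left extension of $L_n$; so for $b\neq a_{\min}$ one has $bL_n\in\lang(\sigma)$ and $\sigma(bL_n)=\beta\,\sigma(a_{\min}L_n)$, which exhibits the last letter of $\beta$ as a left extension of $\sigma(a_{\min}L_n)$. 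To obtain a \emph{second} distinct left extension, the paper compares these last letters $b_a$ with the last letter $c$ of $\sigma(a_{\min})$ itself (which precedes $\sigma(a_{\min}L_n)$ in any occurrence of $\sigma(a'\cdot a_{\min}L_n)$), and uses that the conjugate of index $j-1$ does \emph{not} satisfy \cref{p:minletter-epi}\ref{i:minletter-epi-1} to force some $b_a\neq c$. Your detour via $\ret_\sigma(L_n)$ and the standing assumption $L_n=R_n$ is unnecessary --- the lemma is stated for \emph{every} $n$ --- and never quite closes.

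Your duality assignments for the remaining parts are also scrambled. Part~\ref{i:right-special} cannot use \cref{p:minletter-epi}\ref{i:minletter-epi-2}: when $\ind(\sigma)=m$ we have $m>m-j$, so \ref{i:minletter-epi-2} fails outright. Instead the paper uses that $\sigma$, being the last member of its conjugacy class, has images with pairwise distinct \emph{first} letters; combined with $\sigma(a_{\min})$ being a common suffix (from $m\geq j$ and \ref{i:minletter-epi-1}), this makes $\sigma(a_{\min}R_n)$ right special --- and also shows it lies in $\lang(\sigma)$, a subtlety you do not address even though $a_{\min}R_n$ itself need not be a factor. Symmetrically, \ref{i:left-special-dual} with $\ind(\sigma)=0$ cannot invoke \ref{i:minletter-epi-1} (since $0<j$); it uses instead that the standard representative has images with pairwise distinct \emph{last} letters, together with \ref{i:minletter-epi-2}. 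The genuine duals are \ref{i:left-special}$\leftrightarrow$\ref{i:right-special-dual} and \ref{i:right-special}$\leftrightarrow$\ref{i:left-special-dual}, under the reversal $\ind\leftrightarrow m-\ind$.
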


\begin{proof}
    \ref{i:left-special}. Assume that $\ind(\sigma)=j$ and let $c$ be the last letter of $\sigma(a_{\min})$. By \cref{p:minletter-epi}, $\sigma(a_{\min})$ is a suffix of $\sigma(a)$ for every letter $a\in A$, so the word $\sigma(a)\sigma(a_{\min})^{-1}$ is well-defined. Let $b_a$ be the last letter of $\sigma(a)\sigma(a_{\min})^{-1}$. Consider the morphism $\rho = c\sigma c^{-1}$, which is the conjugate of $\sigma$ of index $j-1$. Applying \cref{p:minletter-epi} again, we have $\rho(A) \not\subseteq A^*\rho(a_{\min})$, hence there is a letter $a\in A$ such that $b_a\neq c$. Since $\sigma$ is strict (\cref{p:strict}) this letter $a$ must be a left extension of $\lesp_n$. It follows that $\sigma(a_{\min}L_n)$ has at least two left extensions, namely $b_a$ and $c$, hence it is left special. 

    \ref{i:right-special}. We now assume that $\ind(\sigma) = m$. Note that $\sigma(a_{\min}\risp_n)$ belongs to $\lang(\sigma)$ (even though $a_{\min}\risp_n$ might not). Indeed, consider the left extension $a$ of $\risp_n$ such that $a\risp_n = \risp_{n+1}$. Then, as $m\geq j$, \cref{p:minletter-epi} implies that $\sigma(a_{\min}\risp_n)$ is a suffix of $\sigma(a\risp_n)$. Finally it suffices to notice that, since $\sigma$ is the last member of its conjugacy class, the image of two distinct letters have different first letters. Hence, taking the first letters of images of distinct right extensions of $a\risp_n$ yields distinct right extensions of $\sigma(a_{\min}\risp_n)$.

    The proofs of \ref{i:right-special-dual} and \ref{i:left-special-dual} are dual.
\end{proof}

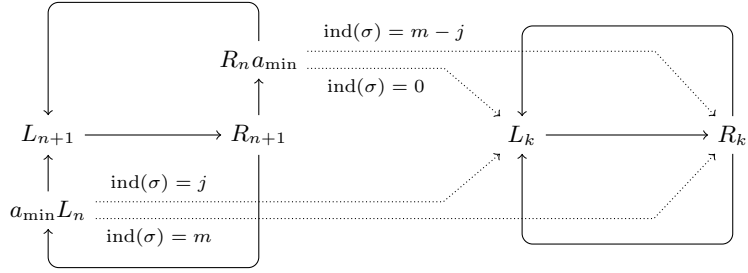
\begin{figure}
    \begin{tikzpicture}[xscale=1.4,>=to]
        \node (Ln+1) at (0,0) {$\lesp_{n+1}$} ;
        \node (Rn+1) at (2,0) {$\risp_{n+1}$} ;
        \node (Ln) at (0,-1) {$a_{\min}\lesp_n$} ;
        \node (Rn) at (2,1) {$\risp_na_{\min}$} ;
        \node(Lk) at (4.5,0) {$\lesp_k$} ;
        \node(Rk) at (6.5,0) {$\risp_k$} ;
        \draw[->] (Ln+1) to (Rn+1) ;
        \draw[->] (Ln) to (Ln+1) ;
        \draw[->,rounded corners] (Rn+1.south) to ++(0,-1.5) to ++(-2,0) to (Ln);
        \draw[->] (Rn+1) to (Rn) ;
        \draw[<-,rounded corners] (Ln+1.north) to ++(0,1.5) to ++(2,0) to (Rn);
        \draw[->] (Lk) to (Rk) ;
        \draw[->,rounded corners] (Rk.north) to ++(0,1.2) to ++(-2,0) to (Lk) ;
        \draw[->,rounded corners] (Rk.south) to ++(0,-1.2) to ++(-2,0) to (Lk) ;
        \draw[->,densely dotted] (Rn.10) to  node [near start,above,font=\scriptsize] {$\ind(\sigma)=m-j$} ++(3.3,0) to (Rk) ;
        \draw[->,densely dotted] (Rn.350) to node [pos=.5,below,font=\scriptsize] {$\ind(\sigma)=0$} ++(1.3,0) to (Lk) ;
        \draw[->,densely dotted] (Ln.350) to node [pos=.2,below,font=\scriptsize] {$\ind(\sigma)=m$} ++(3,0) to ++(2.3,0) to (Rk) ;
        \draw[->,densely dotted] (Ln.10) to node [pos=.2,above,font=\scriptsize] {$\ind(\sigma)=j$} ++(3,0) to ++(.3,0) to (Lk) ;
    \end{tikzpicture}
    \caption{The four cases in \cref{l:proof-1} when $L_n=R_n$  with $k=|\sigma(a_{\min} L_n)|$.}
    \label{f:proof-1}
\end{figure}

\cref{f:proof-1} depicts the different parts of the above lemma in the case where $\lesp_n=\risp_n$, which is of particular interest for the proof of the main result. 

\begin{lemma}\label{l:proof-2}
    Let $\sigma$ be an episturmian morphism with minimal letter $a_{\min}$. Let $j = |\sigma(a_{\min})|$, $m = \frac{\Vert\sigma\Vert-|A|}{|A|-1}$ and $k = |\sigma(a_{\min}\lesp_n)|$. For every sufficiently large $n\in\nn$ such that $\lesp_n = \risp_n$, the following holds:
    \begin{enumerate}
        \item $\sigma(a_{\min}\lesp_n)$ is the $(\ind(\sigma)-j)$-th inner word in $\Gamma_k$ provided $\ind(\sigma)\geq j$;
        \item $\sigma(\risp_na_{\min})$ is the $\ind(\sigma)$-th inner word in $\bar\Gamma_k$ provided $\ind(\sigma)\leq m-j$.
    \end{enumerate}
\end{lemma}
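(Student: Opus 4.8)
The plan is to interpolate across the whole conjugacy class of $\sigma$, using \cref{l:proof-1} at its two ends as anchors. I would write $\sigma=\sigma^{(t)}$ with $t=\ind(\sigma)$ and enumerate the conjugacy class as $\sigma^{(0)},\dots,\sigma^{(m)}$ with $\ind(\sigma^{(i)})=i$ (there are $m+1$ of them by \cref{p:rcp}, and consecutive ones differ by a one-letter conjugation). All of them are primitive and generate the same shift $X=X_\sigma$, which is strict episturmian by \cref{p:strict}; in particular $a_{\min}L_n$ and $R_na_{\min}$ lie in $\lang(X)$. Since conjugate morphisms have equal letter images, $k=|\sigma(a_{\min}L_n)|$ is independent of the chosen member, and $j\le m$ by \cref{p:minletter-epi}~\ref{i:minletter-epi-3}. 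The main elementary tool would be the \emph{sliding principle}: if $\sigma^{(i+1)}$ is the right conjugate of $\sigma^{(i)}$ by the letter $c$, then $\sigma^{(i)}(v)\,c=c\,\sigma^{(i+1)}(v)$ for every word $v$; as $\sigma^{(i)}(v)$ begins with $c$, this says that $\sigma^{(i+1)}(v)$ is the cyclic left rotation of $\sigma^{(i)}(v)$, and iterating, $\sigma^{(i)}(v)$ is the rotation of $\sigma^{(0)}(v)$ by $i$ positions.

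Next I would set up the anchors. Since $\sigma^{(j)}$ has index $j$, \cref{l:proof-1}~\ref{i:left-special} gives that $\sigma^{(j)}(a_{\min}L_n)$ is left special of length $k$, hence equals $L_k$; using $L_n=R_n$, \cref{l:proof-1}~\ref{i:right-special} gives that $\sigma^{(m)}(a_{\min}L_n)=\sigma^{(m)}(a_{\min}R_n)$ is right special of length $k$, hence equals $R_k$. By the sliding principle $\sigma^{(t)}(a_{\min}L_n)$ is the rotation of $L_k$ by $t-j$ positions for $j\le t\le m$, and the rotation by $m-j$ positions equals $R_k$. The statement then reduces to the claim that \emph{the rotations of $L_k$ by $0,1,\dots,m-j$ positions are, in that order, the inner words $y_0=L_k,\dots,y_{m-j}=R_k$ of $\Gamma_k$, and $|U_k|=m-j$}: the first statement of the lemma follows by evaluating at $t=\ind(\sigma)\in[j,m]$, and the second is dual, using \cref{l:proof-1}~\ref{i:left-special-dual} to get $\sigma^{(0)}(R_na_{\min})=\sigma^{(0)}(L_na_{\min})=L_k$ and \cref{l:proof-1}~\ref{i:right-special-dual} for the far anchor, so that $\sigma^{(t)}(R_na_{\min})=y_t$ for $t=\ind(\sigma)\in[0,m-j]$ (the inner branch of $\bar\Gamma_k$ being the same path).

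To prove the claim I would let $w$ be the shortest bispecial factor of $X$ containing $L_k$; by \cref{l:dl} and the directive-word description, $w=\pal(\bs d_{[0,N)})$ for a suitable $N$, so $w$ is a palindrome whose successive length-$k$ factors are exactly $y_0,\dots,y_{|U_k|}$. Since $R_k$ is simultaneously the mirror image of $L_k$ and (by the anchors) the rotation of $L_k$ by $m-j$ positions, $L_k$ splits as $L_k=UP$ with $U,P$ palindromes and $|U|=m-j$, and a short computation then shows that the rotation of $UP$ by $i\le m-j$ positions equals the length-$k$ factor of $UPU$ at position $i$. Thus it suffices to identify $w$ with $UPU$, equivalently to show the rotation process stays on the inner branch until reaching $R_k$. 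The crucial point is that for $j\le t<m$ the word obtained from $\sigma^{(t)}(a_{\min}L_n)$ by deleting its first letter is \emph{not} the right special factor $R_{k-1}$: given that, this word has a unique right extension, hence $\sigma^{(t)}(a_{\min}L_n)$ has a unique successor in $\Gamma_k$, and since the rotation $\sigma^{(t+1)}(a_{\min}L_n)\in\lang(X)$ shares that word as a prefix, it must be that successor, advancing one step along the inner branch. An induction on $t$, together with aperiodicity of $X$ (for $k$ large no length-$k$ factor is fixed by a nonzero rotation of size $\le m$, forcing $|U_k|=m-j$), then yields the claim.

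The hard part is the exclusion just mentioned — that deleting the first letter of $\sigma^{(t)}(a_{\min}L_n)$ does not produce $R_{k-1}$ — and this is exactly where ``$n$ sufficiently large'' is used: for $n$ large that word has a long, rigidly structured suffix, namely $\sigma^{(t)}(L_n)$, and one must rule out that this suffix makes the word equal to the unique right special factor of its length. I expect this can be done either by a length comparison exploiting recognizability of $\sigma^{(t)}$, or by locating the first occurrence of $\sigma^{(t)}(a_{\min}L_n)$ directly via \cref{t:return} and checking that it falls on the inner branch of $\Gamma_k$. The remaining verifications — the sliding principle, the palindrome splitting, and the distinctness of the rotations — are routine.
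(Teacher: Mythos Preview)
Your setup matches the paper's: enumerate the conjugacy class as $\sigma^{(0)},\dots,\sigma^{(m)}$, use \cref{l:proof-1} to anchor the sequence $(\sigma^{(t)}(a_{\min}L_n))_{j\le t\le m}$ at $L_k$ and $R_k$, and argue that this sequence is the inner branch of $\Gamma_k$. The difference lies entirely in how you finish, and there you have both overcomplicated things and left a real gap.

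The gap is your ``hard part'': you need, for $j\le t<m$, that the length-$(k-1)$ tail of $\sigma^{(t)}(a_{\min}L_n)$ is not $R_{k-1}$, and you only offer vague pointers (recognizability, \cref{t:return}) rather than an argument. But this exclusion is not needed; two observations dissolve it. First, the sequence is automatically a path in $\Gamma_k$: if $c$ is the letter with $\sigma^{(t)}(a)c = c\sigma^{(t+1)}(a)$ for all $a$, then every $\sigma^{(t)}(a)$ begins with $c$; hence for any right extension $b$ of $a_{\min}L_n$ the word $\sigma^{(t)}(a_{\min}L_nb)\in\lang(\sigma)$ has $\sigma^{(t)}(a_{\min}L_n)c$ as a prefix, and this gives the edge $\sigma^{(t)}(a_{\min}L_n)\to\sigma^{(t+1)}(a_{\min}L_n)$ directly, with no appeal to uniqueness of successors. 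Second, once you have a path of length $m-j$ in $\Gamma_k$ from $L_k$ to $R_k$, you conclude it is the inner branch simply by taking $n$ large enough that every return word to $L_k$ has length exceeding $m-j$: the path then cannot revisit $L_k$, and any path in $\Gamma_k$ from $L_k$ to $R_k$ that does not revisit $L_k$ is the inner branch. This is precisely the paper's argument, and it replaces your palindrome-splitting detour, the aperiodicity remark, and the unresolved exclusion in one stroke.
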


\begin{proof}
    Assume that $\sigma$ is standard and let $\sigma^{(i)}$ be its $i$-th conjugate, for $0\leq i\leq m$. Consider the sequence of words:
    \begin{equation}\label{eq:path}
        [\sigma^{(j)}(a_{\min}\lesp_n),\ \sigma^{(j+1)}(a_{\min}\lesp_n),\ \ldots,\ \sigma^{(m)}(a_{\min}\lesp_n)].
    \end{equation}
    The content of the lemma is equivalent to the claim that \eqref{eq:path} is exactly the inner branch of $\Gamma_k$. By \cref{l:proof-1}, $\sigma^{(j)}(a_{\min}\lesp_n) = L_k$ and $\sigma^{(m)}(a_{\min}\lesp_n) = \sigma^{(m)}(a_{\min}\risp_n) = R_k$. Hence \eqref{eq:path} gives a path in the Rauzy graph $\Gamma_k$ which starts at $L_k$ and ends at $R_k$. If we choose $n$ such that all return words to $L_k$ are longer than $m-j$, then this path cannot visit $L_k$ twice, thus it must be exactly the inner branch. This proves the first part of the statement; the second part is dual.
\end{proof}

The next lemma is the last piece missing for the proof of the main result.
\begin{lemma}\label{l:proof-3}
    Let $\sigma$ be a primitive episturmian morphism and $u\in\lang(\sigma)$. Then:
    \begin{enumerate}
        \item for every $r\neq s\in\ret_X(u)$, $|\gcs(\sigma(rs),\sigma(sr))| \geq \ind(\sigma)$ with equality if and only if $u$ is left special;
            \label{i:proof-3}
        \item for every $r\neq s\in\bar\ret_X(u)$, $|\gcp(\sigma(rs),\sigma(sr))| \geq \frac{\Vert\sigma\Vert-|A|}{|A|-1} - \ind(\sigma)$ with equality if and only if $u$ is right special.
            \label{i:proof-3-dual}
    \end{enumerate}
\end{lemma}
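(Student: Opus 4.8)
The plan is to derive both items from the description of return sets in \cref{t:return} together with the conjugacy-index formula of \cref{c:index}, proving item~\ref{i:proof-3} first and then obtaining~\ref{i:proof-3-dual} from it by a reversal argument. Write $X = X_\sigma$, which is a strict episturmian shift by \cref{p:strict}, with directive word $\bs d$. The first move is to replace the return words by a single morphism: for $u\in\lang(\sigma)$ put $\rho = \psi_{\dee(u)}^{(\ell(u))}$, so that $\rho(A) = \ret_X(u)$ by \cref{t:return} and \cref{n:return}. Being a right conjugate of the standard morphism $\psi_{\dee(u)}$, the map $\rho$ is episturmian (\cref{t:richomme}) and injective on letters, hence every pair $r\neq s$ in $\ret_X(u)$ can be written $r=\rho(a)$, $s=\rho(b)$ with $a\neq b$; moreover $\sigma\rho:=\sigma\circ\rho$ is episturmian (as $\epi(A)$ is a monoid) and $\sigma(rs)=(\sigma\rho)(ab)$, $\sigma(sr)=(\sigma\rho)(ba)$. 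Applying \cref{c:index} to $\sigma\rho$ then turns~\ref{i:proof-3} into the statement that $\ind(\sigma\rho)\geq\ind(\sigma)$, with equality exactly when $u$ is left special.

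To compute $\ind(\sigma\rho)$ I would push the conjugating words through the composition. Let $\sigma_0=w\sigma w^{-1}\in\std(A)$ be the standard conjugate of $\sigma$, so $|w|=\ind(\sigma)$, and let $f$ be the prefix of $\pal(\dee(u))$ of length $\ell(u)$, so that $\rho=f^{-1}\psi_{\dee(u)}f$ by \cref{p:rcp}, i.e.\ $f\rho(a)=\psi_{\dee(u)}(a)f$ for all $a$. Applying $\sigma_0$ to this identity shows that $\sigma_0\rho$ is a right conjugate of $\sigma_0\psi_{\dee(u)}$ via the word $\sigma_0(f)$; since $\std(A)$ is a submonoid of $\epi(A)$, the morphism $\sigma_0\psi_{\dee(u)}$ is standard, hence is the standard conjugate of $\sigma_0\rho$, so $\ind(\sigma_0\rho)=|\sigma_0(f)|$. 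Combining this with $\sigma\rho=w^{-1}(\sigma_0\rho)w$ exhibits $\sigma\rho$ as a right conjugate of the standard morphism $\sigma_0\psi_{\dee(u)}$ via $\sigma_0(f)w$, whence $\ind(\sigma\rho)=|\sigma_0(f)|+\ind(\sigma)\geq\ind(\sigma)$. Since $\sigma_0$ is non-erasing, equality holds iff $f=\emptyw$, i.e.\ iff $\ell(u)=0$; and $\ell(u)=0$ means precisely that $u$ occurs as a prefix of the shortest bispecial factor $\pal(\dee(u))$ containing it, hence as a prefix of the standard word $\pal(\bs d)$ — which, since all prefixes of $\pal(\bs d)$ are left special while every left special factor of $X$ is such a prefix, happens exactly when $u$ is left special. (Alternatively, $\ell(u)=0\iff u$ left special can be read directly from \cref{l:dl}.)

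For~\ref{i:proof-3-dual} I would apply~\ref{i:proof-3} to the reversed morphism $\tilde\sigma\colon a\mapsto\widetilde{\sigma(a)}$, where the tilde denotes word reversal. This is again a primitive episturmian morphism, and since $\lang(X)$ is closed under reversal one has $X_{\tilde\sigma}=X$ and $\widetilde{\bar\ret_X(u)}=\ret_X(\tilde u)$. For $r\neq s$ in $\bar\ret_X(u)$ one computes $\tilde\sigma(\tilde r\tilde s)=\widetilde{\sigma(sr)}$ and $\tilde\sigma(\tilde s\tilde r)=\widetilde{\sigma(rs)}$, so that $|\gcp(\sigma(rs),\sigma(sr))|=|\gcs(\tilde\sigma(\tilde r\tilde s),\tilde\sigma(\tilde s\tilde r))|$, which by~\ref{i:proof-3} is at least $\ind(\tilde\sigma)$ with equality iff $\tilde u$ — equivalently $u$ — is right special. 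It remains to identify $\ind(\tilde\sigma)$: from \cref{c:index} applied to $\tilde\sigma$ and the relation $\tilde\sigma(ab)=\widetilde{\sigma(ba)}$ one gets $\ind(\tilde\sigma)=|\gcs(\tilde\sigma(ab),\tilde\sigma(ba))|=|\gcp(\sigma(ab),\sigma(ba))|=\frac{\Vert\sigma\Vert-|A|}{|A|-1}-\ind(\sigma)$, again by \cref{c:index}.

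The step I expect to be the main obstacle is the conjugacy bookkeeping in the second paragraph: one must check that the conjugating words $w$ and $\sigma_0(f)$ both point away from the standard representative of the class, so that their lengths genuinely add when computing $\ind(\sigma\rho)$ rather than partially cancelling, and that $\sigma_0\psi_{\dee(u)}$ really is the standard member of its conjugacy class (so that the index is just the length of the conjugating word to it). Once that is pinned down, the rest is a routine combination of \cref{t:return}, \cref{c:index} and \cref{p:rcp}.
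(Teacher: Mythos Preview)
Your proof is correct. For part~\ref{i:proof-3} your argument is essentially identical to the paper's: both reduce to showing $\ind(\sigma\rho)=\ind(\sigma)+|\sigma_0(f)|$ by expressing $\sigma\rho$ as a right conjugate of the standard morphism $\sigma_0\psi_{\dee(u)}$ via $\sigma_0(f)w$. The paper writes this out with the explicit notation $x=\gcs(\sigma(ab),\sigma(ba))$, $w=\pal^{-1}(xy)$, etc., but the content is the same; the ``obstacle'' you flag (that the two conjugating words add rather than cancel, and that $\sigma_0\psi_{\dee(u)}\in\std(A)$) is handled in both cases by the closure of $\std(A)$ under composition and the uniqueness in \cref{t:richomme}.

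For part~\ref{i:proof-3-dual} you take a genuinely different route. The paper repeats the computation in dual form, producing an explicit episturmian morphism $\bar\rho=(y\bar\psi_w(z'))\bar\psi_{w\pi(\dee)}(y\bar\psi_w(z'))^{-1}$ with $\sigma(\bar\ret_\sigma(u))=\bar\rho(A)$ and then reading off $|\gcp|$ from $\ind(\bar\rho)$ via \cref{c:index}. Your reversal argument is cleaner: applying part~\ref{i:proof-3} to $\tilde\sigma$ and $\tilde u$ and using $\tilde\sigma(\tilde r\tilde s)=\widetilde{\sigma(sr)}$, $\ind(\tilde\sigma)=|\gcp(\sigma(ab),\sigma(ba))|$, and $X_{\tilde\sigma}=X_\sigma$ (all of which follow from closure of $\lang(\sigma)$ under reversal and the stability of $\epi(A)$ under $\sigma\mapsto\tilde\sigma$) avoids the second computation entirely. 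The paper's direct approach has the minor advantage of yielding an explicit formula for the episturmian morphism whose letter images are $\sigma(\bar\ret_\sigma(u))$, but for the stated lemma your shortcut is sufficient and more economical.
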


\begin{proof}
    Let
    \begin{equation*}
        x = \gcs(\sigma(ab),\sigma(ba)),\quad y = \gcp(\sigma(ab),\sigma(ba)),\quad w = \pal^{-1}(xy).
    \end{equation*}
    It follows from \cref{c:gcs-gcp} that $x\sigma x^{-1} = \psi_w\circ\pi$ and $y^{-1}\sigma y = \bar\psi_w\circ\pi$, for some $\pi\in\perm(A)$. 
    
    Following \cref{n:return}, let $\dee = \dee(u)$, $\ell = \ell(u)$ and $\ell' = \ell'(u)$. Let $z$ be the prefix of length $\ell$ of $\pal(\dee)$ and $z'$ its suffix of length $\ell'$. 
    
    \ref{i:proof-3}. As a result of \cref{t:return}, we have
    \begin{equation*}
        \ret_\sigma(u) = (z^{-1}\psi_{\dee} z)(A) = z^{-1}(\psi_{\dee}(A))z.
    \end{equation*}
    Observe that $\ell=0$ if and only if $u$ is left special. Next, we compute the composition of $\sigma$ with $z^{-1}\psi_{\dee}z$:
    \begin{align*}
        \sigma\circ(z^{-1}\psi_{\dee} z) &= (x^{-1}\psi_w x) \circ \pi\circ(z^{-1}\psi_{\dee}z) \\
                               &= x^{-1}\psi_w(z)^{-1}\psi_{w\pi(\dee)}\psi_w(z)x \circ \pi \\
                               &= (\psi_w(z)x)^{-1}\psi_{w\pi(\dee)}(\psi_w(z)x) \circ \pi.
    \end{align*}
    
    In particular, it follows that $\sigma(\ret_\sigma(u)) = \rho(A)$, where $\rho$ is the episturmian morphism:
    \begin{equation*}
        \rho = (\psi_w(z)x)^{-1}\psi_{w\pi(\dee)}(\psi_w(z)x).
    \end{equation*}
    The conjugacy index of $\rho$ is $|\psi_w(z)x|$, which satisfies
    \begin{equation*}
        |\psi_w(z)x| = \ind(\sigma)+|\psi_w(z)| \geq \ind(\sigma),
    \end{equation*} 
    with equality if and only if $z=\emptyw$. As we previously observed, this is the case if and only if $u$ is left special. As $\sigma(\ret_\sigma(u)) = \rho(A)$, we have that distinct elements $r\neq s\in\sigma(\ret_\sigma(u))$ are of the form $r=\rho(a)$, $s=\rho(b)$, with $a\neq b\in A$. Finally, applying \cref{c:index}, we get
    \begin{equation*}
        |\gcs(rs,sr)| = |\gcs(\rho(ab),\rho(ba))| = \ind(\rho) = \ind(\sigma)+|\psi_w(z)| \geq \ind(\sigma),
    \end{equation*}
    with equality if and only if $u$ is left special.

    \ref{i:proof-3-dual}. Using similar arguments, we find that $\sigma(\bar\ret_\sigma(a_{\min}\lesp_n)) = \bar\rho(A)$ where
    \begin{equation*}
        \bar\rho = (y\bar\psi_w(z'))\bar\psi_{w\pi(\dee)}(y\bar\psi_w(z'))^{-1}.
    \end{equation*}
    The conjugacy index of $\bar\rho$ is given by the slightly more involved formula:
    \begin{equation*}
        \ind(\bar\rho) = \frac{\Vert\bar\rho\Vert-|A|}{|A|-1} - |y\bar\psi_w(z')| = \frac{\Vert\bar\rho\Vert-|A|}{|A|-1} - \frac{\Vert\sigma\Vert-|A|}{|A|-1} +\ind(\sigma) -|\bar\psi_w(z')|. 
    \end{equation*}
    Then for distinct elements $r,s\in \bar\ret_\sigma(u)$, we find $r = \bar\rho(a)$, $s=\bar\rho(b)$ where $a\neq b\in A$, and applying \cref{c:index}, we get
    \begin{align*}
        |\gcp(rs,sr)| &= |\gcp(\bar\rho(ab),\bar\rho(ba))| = \frac{\Vert\bar\rho\Vert-|A|}{|A|-1} - \ind(\bar\rho) \\
                      &= \frac{\Vert\sigma\Vert-|A|}{|A|-1} - \ind(\sigma) + |\bar\psi_w(z')| \geq \frac{\Vert\sigma\Vert-|A|}{|A|-1} - \ind(\sigma).
    \end{align*}
    with equality when $z' = \emptyw$, which happens precisely when $u$ is right special.
\end{proof}

We are now ready to put all the pieces together and finalize the proof of our main result.
\begin{proof}[Proof of \cref{t:main}]
    \ref{i:main-1}. Let $\sigma$ be a primitive episturmian morphism with minimal letter $a_{\min}$. Let $j = |\sigma(a_{\min})|$ and assume that $\ind(\sigma)\geq |\sigma(a_{\min})|$. Fix $n\in\nn$ such that $\lesp_n=\risp_n$. By \cref{l:proof-2}, we may assume, up to taking $n$ large enough, that $\sigma(a_{\min}\lesp_n)$ is the $(\ind(\sigma)-j)$-th inner word of its Rauzy graph. Then, choosing any two distinct return words $r\neq s\in\ret_\sigma(\sigma(a_{\min}\lesp_n))$, \cref{l:dl} gives
    \begin{equation*}
        |\gcs(rs,sr)| = \ind(\sigma)-j,
    \end{equation*}
    while by \cref{l:proof-3}, any two distinct elements $r'\neq s'\in\sigma(\ret_\sigma(a_{\min}\lesp_n))$ satisfy
    \begin{equation*}
        |\gcs(r's',s'r')| \geq \ind(\sigma) > \ind(\sigma)-j.
    \end{equation*}
    This shows that $\sigma(\ret_\sigma(a_{\min}\lesp_n)) \neq \ret_\sigma(\sigma(a_{\min}\lesp_n))$.

    \ref{i:main-2}. Now we suppose that $\ind(\sigma)\leq m-j$, where $m = \frac{\Vert\sigma\Vert-|A|}{|A|-1}$. In this case, by \cref{l:proof-2}, we may suppose, up to taking $n$ large enough, that $\sigma(\risp_na_{\min})$ is the $\ind(\sigma)$-th inner word in the Rauzy graph $\bar\Gamma_k$, where $k=|\sigma(\risp_na_{\min})|$; in fact, we have that the inner branch of $\bar\Gamma_k$ has length $m-j$. Then, by \cref{l:dl}, we have for $r\neq s\in\bar\ret_\sigma(\sigma(\risp_na_{\min}))$:
    \begin{equation*}
        |\gcp(rs,sr)| = |U_k|-\ind(\sigma) = m-j-\ind(\sigma).
    \end{equation*}
    On the other hand, by \cref{l:proof-3}, any two distinct elements $r'\neq s'\in\sigma(\bar\ret_{\sigma}(\risp_na_{\min}))$ satisfy:
    \begin{equation*}
        |\gcp(r's',s'r')| \geq m - \ind(\sigma) > m-j-\ind(\sigma).
    \end{equation*}
    Therefore we conclude that $\sigma(\bar\ret_\sigma(\risp_na_{\min})) \neq \bar\ret_\sigma(\sigma(\risp_na_{\min}))$.
\end{proof}

\section{Conclusion}\label{sec:conclu}

Let us briefly summarize the main ideas that appeared in the paper. We have established the existence of infinitely many obstructions to the preservation of return sets by episturmian morphisms. Two specific combinatorial notions were used in the description of these obstructions: the conjugacy index $\ind(\sigma)$ (\cref{d:index}) and  the minimal letter $a_{\min}$ (\cref{l:minletter}). On our way to the proof of \cref{t:main}, we established a number of properties involving these notions which are of independant interest.

The conjugacy index measures the distance to the standard episturmian morphism in its conjugacy class. According to \cref{c:index}, one observes in particular that for every $a\neq b\in A$
    \begin{align*}
        \ind(\sigma) &= \frac{\Vert\sigma\Vert - |A|}{|A|-1} - |\gcp(\sigma(ab),\sigma(ba))|\\
                     &=|\gcs(\sigma(ab),\sigma(ba))|,
        \end{align*}
with $\frac{\Vert\sigma\Vert - 1}{|A|-1} $ being equal to the cardinality of the conjugacy class of $\sigma$.

For an episturmian morphism $\sigma$ which is not a permutation, the minimal letter is defined as  the unique letter $a_{\min}$ such that $|\sigma(a_{\min})|<|\sigma(a)|$ for every $a\in A$, $a\neq a_{\min}$. In particular, when $\sigma$ is a standard morphism, i.e. when $\sigma$ is of the form $\sigma=\psi_u$ with $u \in A^+$, then $a_{\min}$ is equal to the last letter of $u$. \cref{p:minletter-epi} highlights the relationship between $\ind(\sigma)$ and $\sigma (a_{\min})$.

The proof of the main result also relies on the existence of convenient descriptions of return words and Rauzy graphs in episturmian shifts. Concerning return words, we establish in particular the following description for left and right sets of return words when $X$ is a non-degenerate episturmian shift space:  
\begin{equation*}
    \ret_X(u) = \psi_{\dee(u)}^{(\ell(u))}(A),\qquad \bar\ret_X(u) = \bar\psi_{\dee(u)}^{(\ell'(u))}(A),
\end{equation*}
where  
\begin{gather*}
     \dee(u) = \pal^{-1}(\gcs(rs,sr)\gcp(rs,sr)),\\ \ell(u) = |\gcs(rs,sr)|,\quad \ell'(u) = |\gcp(rs,sr)|,
\end{gather*}
for any choice of $r\neq s\in\ret_X(u)$ (see \cref{n:return}). 

The failure of the preservation property also reveals a \emph{shattering} of return words under the morphism. Consider as an illustration a primitive morphism $\sigma$ and a return word $r$ to the word $u$. The word $\sigma(r)$ is in general a concatenation of return words to  $\sigma(u)$. When the preservation property holds, the word $\sigma(r)$ is in fact \emph{exactly} one return word to $\sigma(u)$. On the other hand, it may happen that $\sigma(u)$ also occurs as an internal factor in $\sigma(ru)$, even if $u$ does not appear as an internal factor in $ru$. In other words, the morphism $\sigma$ can create an occurrence of $\sigma(u)$ even in the absence of  an occurrence of $u$, which causes $\sigma(r)$ to \emph{shatter}, so to speak, into several return words. While failure of the preservation property can sometimes be the result of a cardinality obstruction (as in \cite{Berthe2023}, Example 3), this cannot be the case for episturmian shift spaces, since the number of return words is constant. In particular our main result indicates that this shattering of return words occurs infinitely often.

Many questions remain open with regards to the preservation property. It is still unclear what are the precise boundaries of this property: it includes the bifix case and excludes the episturmian case, but many substitutions do not belong to either of these two families. The methods employed for constructing obstructions in the present paper relied heavily on the very special structure of episturmian morphisms. Investigating wider families (for instance dendric morphisms) would most likely require a novel approach. Another related question, which seems difficult in general, is to obtain a precise description of the set of obstructions. In the bifix case, the set of obstructions is finite and can be effectively computed; however, outside of this, we do not know how to calculate this set, even in the episturmian case.

\bibliographystyle{abbrvnat}
\bibliography{episturmian}

\vspace{1em}

\end{document}